\newtheorem{theorem}{Theorem}[section]
\newtheorem{cor}[theorem]{Corollary}
\newtheorem{lem}[theorem]{Lemma}
\newtheorem{prop}[theorem]{Proposition}
\theoremstyle{definition}
\newtheorem{example}[theorem]{Example}
\newtheorem{defi}[theorem]{Definition}
\newtheorem{rem}[theorem]{Remark}
\numberwithin{equation}{section}
\DeclareMathOperator{\Hom}{Hom}
\DeclareMathOperator{\Ass}{\textsl{ass}}
\DeclareMathOperator{\Lie}{\textsl{lie}}
\DeclareMathOperator{\MC}{MC}
\DeclareMathOperator{\CE}{CE}
\DeclareMathOperator*{\tw}{tw}
\DeclareMathOperator*{\Tw}{Tw}
\def\Comm{\textsl{comm}}
\DeclareMathOperator*{\Ext}{\operatorname{Ex}}
\newcommand{\noproof}{\begin{flushright} \ensuremath{\square}
\end{flushright}}
\def\ground{\mathbf{k}}
\def\g{\mathfrak{g}}
\def\End{\mathcal{E}}
\def\O{\mathscr{O}}
\def\P{\mathscr{P}}
\def\Q{\mathscr{Q}}
\def\Z{\mathbb{Z}}
\def\id{\operatorname{id}}
\def\Der{\operatorname{Der}}
\def\a{\mathfrak{a}}
\def\as{\mathfrak{as}}
\def\c{\mathfrak{c}}
\def\cs{\mathfrak{cs}}
\def\L{\mathscr L_\infty}
\def\A{\mathscr A_\infty}
\thanks{}
\begin{document}
\begin{abstract}
We give a general treatment of the master equation in homotopy algebras and describe the operads and formal differential geometric objects governing the corresponding algebraic structures. We show that the notion of Maurer-Cartan twisting is encoded in certain automorphisms of these universal objects.
\end{abstract}
\title[Combinatorics and formal geometry of the master equation]{Combinatorics and formal geometry of the master equation}
\author{J. Chuang  \and A.~Lazarev}
\thanks{}
\address{Centre for Mathematical Science\\City University\\London
EC1V 0HB\\UK}
\email{J.Chuang@city.ac.uk}
\address{University of Leicester\\ Department of
Mathematics\\Leicester LE1 7RH, UK.}
\email{al179@leicester.ac.uk} \keywords{Differential graded Lie
algebra, master equation, Maurer-Cartan element, A-infinity algebra, L-infinity algebra, operad, twisting} \subjclass[2010]{18D50, 17B55, 17B66, 16E45}

\maketitle
\tableofcontents
\section{Introduction}
Let $\g$ be a differential graded associative or Lie algebra. An odd element $\xi$ in $\g$ is called \emph{Maurer-Cartan} (or MC for short) if it satisfies the
\emph{master equation}, sometimes with the adjectives `classical' or `quantum', or the MC equation: $d\xi+\frac{1}{2}[\xi,\xi]=0$. An MC element $\xi$ allows one to \emph{twist} the differential in $\g$; the twisted differential has the form $d^\xi:=d+[\xi,?]$ and the MC condition ensures that $(d^\xi)^2=0$. The master equation and MC twisting appear in various different contexts: deformation theory, homological algebra, rational homotopy theory, differential geometry and mathematical physics (see, e.g., \cite{Get, Mer, Sta}).

There is a far-reaching generalization of the notion of an MC element and MC twisting where $\g$ is taken to be an $L_\infty$ or $A_\infty$ algebra, which could be viewed as a homotopy invariant version of a Lie or associative algebra. There are two complementary ways to treat these objects: the traditional combinatorial approach, as algebras over certain \emph{operads} and the geometric approach, as homological vector fields on (possibly noncommutative) formal supermanifolds. The geometric viewpoint goes back to Kontsevich, \cite{Kon}. In this paper we adopt both points of view.

Our aim is to describe the operadic and differential geometric structures governing MC elements and MC twisting in $A_\infty$ and $L_\infty$ algebras.
In particular, we construct certain $L_\infty$ algebras of vector fields whose MC elements are themselves $L_\infty$ and $A_\infty$ algebras together with a choice of an MC element in them; the notion of MC twisting is encoded in certain `twisting' automorphisms of these $L_\infty$ algebras. We also give give versions corresponding to cyclic $L_\infty$ and $A_\infty$ algebras. A parallel result is also obtained on the operadic side of the picture by introducing certain dg operads $\L$ and $\A$ whose algebras are $L_\infty$ and $A_\infty$ algebras together with a choice of an MC element.  Again, twisting corresponds to automorphisms in these operads.

Our next collection of results centers on the calculation of the homology of the operads $\L$ and $\A$ governing MC elements and we give a rather complete answer. In this context it is useful and illuminating to consider MC elements in $\O$ algebras where $\O$ is not necessarily an $\L$ or $A_\infty$ operad.  These MC elements are also governed by certain operads, which reduce to $\L$ and $\A$ in the case when $\O=L_\infty$ or $\O=A_\infty$ and we compute the homology of these operads. Additionally,  our approach gives a conceptual explanation of some of the
results of Willwacher \cite{Wil}. In a forthcoming paper we will apply our results to construct a higher analogue of Kontsevich's characteristic
classes of homotopy algebras with values in appropriate graph complexes. This application requires extending our results in the context of modular operads and we briefly indicate how this is done in relevant places.

The paper is organized as follows. Section 1 reviews background material on operads, operadic algebras and MC elements; it contains no new results but offers a slightly less traditional treatment. For example, we consider two versions of the $L_\infty$ operad: one with the unary operation $m_1$ and the other without, and similarly for the $A_\infty$ operad.

In Section 2 we construct the operads $\L$ and $\A$ associated to $L_\infty$ and $A_\infty$ algebras with MC elements; these operads represent the combinatorial point of view mentioned above. Section 3 is devoted to understanding the same structures from the formal geometric point of view; we construct certain big $L_\infty$ algebras of formal vector fields that capture this geometric content of the MC elements. Of independent interest here is the notion of an extension of $L_\infty$ algebras and classification of such extensions. The developed theory  allows one, among other things, to build a large supply of $L_\infty$ algebras out of usual graded Lie algebras.

In section 4 we apply the operadic and formal geometric frameworks to the MC twisting itself; it turns out that the latter is encoded by certain automorphisms of the corresponding operads and algebras of formal vector fields.

In Section 5 we consider an analogue of MC twisting in operads and introduce the functor $\O\mapsto\hat{\O}$ (the `hat-construction') which allows one to treat the internal differential on the underlying space of an algebra over an operad $\O$ as part of an operadic structure of $\hat{\O}$. Somewhat surprisingly, the operad $\hat{\O}$ turns out to be always acyclic. In Section 6 this fact is used to prove that the operads $\L$ and $\A$ are likewise acyclic. This acyclicity is caused by the presence of operations of low arity in these operads and it does not, of course, imply that they have trivial categories of algebras. We also consider certain quotients of $\A$ and $\L$ which are not acyclic.

In section 7 we relate our constructions to the results of Willwacher and give an alternative construction of the operad $\Tw\O$ whose algebras are, roughly speaking, $\O$ algebras on a dg vector spaces whose differential have been MC twisted. We compute the homology of $\Tw\O$ in the case when $\O$ is the $A_\infty$ or $L_\infty$ operad.

\subsection{Notation and conventions} We mainly adopt the conventions of our
previous paper \cite{CL}. By a \emph{vector space} we will mean a $\Z/2$-graded
 vector space (also known as super-vector space) over a field
 $\ground$ of characteristic zero.
The adjective `differential graded' will
mean `differential $\Z/2$-graded' and will be abbreviated as `dg'. Thus, a dg vector space is a pair $(V,d_V)$ where
$V$ is a $\Z/2$-graded vector space and $d_V$ is a differential on it; we will frequently omit mentioning $d_V$ in cases when
its meaning is clear from the context. While we prefer the $\Z/2$-graded framework, all our results have obvious $\Z$-graded analogues.

A
(commutative) differential graded (Lie) algebra will be abbreviated
as (c)dg(l)a.
We will often invoke the notion of a \emph{formal}
(dg) vector space; this is just an inverse limit of
finite-dimensional vector spaces. This terminology was adopted in \cite{CL} and some earlier works that the present paper relies on.\footnote{This convention is not ideal since in other contexts `formal' means `quasi-isomorphic to cohomology'. A formal vector space is
more traditionally known as a linearly compact vector space, a terminology introduced by Lefschetz \cite{Lef}; we feel that in our context it is not perfect either.} An example of a formal space is
$V^*$, the $\ground$-linear dual to a discrete vector space~$V$. A
formal vector space comes equipped with a topology and whenever we
deal with a topological vector space all linear maps from or into it will
be assumed to be continuous; thus for a formal vector space $V$ we always have $V^{**}\cong
V$, even if $V$ is not finite-dimensional.  All of our unmarked tensors are understood to be taken over
$\ground$. If $V$ is a discrete space and $W=\underleftarrow{\lim}\,{W_i}$ is
a formal space we will write $V\otimes W$ for
$\underleftarrow{\lim}\,V\otimes W_i$; thus for two discrete spaces $V$ and
$U$ we have $\Hom(V,U)\cong U\otimes V^*$.
For a $\Z/2$-graded vector space $V=V_0\oplus V_1$ the symbol $\Pi
V$ will denote the \emph{parity reversion} of $V$; thus $(\Pi
V)_0=V_1$ while $(\Pi V)_1=V_0$. The association $V\mapsto\Pi V$ is a functor; thus to a given map $f:V\to U$ between
graded spaces $V$ and $U$ there corresponds a map $\Pi f:\Pi V\to\Pi U$; to alleviate the notation and avoid possible confusion we will often
write $f$ for $\Pi f$. The symbol $S_n$ stands for the symmetric group on $n$ letters;
it is understood to act on the $n$th tensor power of a
graded vector space $V^{\otimes n}$ by permuting the tensor factors.

In this paper we freely use the language of operads and modular operads as developed
in standard sources such as \cite{GiK, GK}. An operad in this paper will always mean
a non-unital operad, i.e. a collection of vector spaces $\O(n)$ together with actions of symmetric groups
$S_n$ on each $\O(n)$ and supplied with a collection of structure maps
\[
\circ_i:\O(k)\otimes\O(n)\to\O(k+n-1),\quad i=1,\ldots, k
\]
satisfying suitable axioms. If the action of symmetric groups is omitted we obtain the notion of a \emph{non-symmetric operad}.
The conventions we adopt for $L_\infty$ and $A_\infty$ algebras lead to the consideration of twisted operads.
To avoid repeated mention of cocycle twistings, we shall use the terms cyclic operad and modular operad to mean anticyclic operad and $\mathfrak{D}_\Pi$-twisted modular operad, respectively; see \cite{GeK,GK} for details.

The \emph{endomorphism operad} of a dg vector space $V$ is the dg operad of the form $\End(V):=\{\Hom(V^{\otimes n},V)\},\, n=1,2,\ldots$. An \emph{algebra} over a dg operad $\O$ is a map of dg operads $\O~\to~\End(\Pi V)$. This slightly unorthodox definition (of course, equivalent to the usual one) allows us to minimize the sign issues present. This conflicts with the more traditional usage (e.g. an associative algebra is not an algebra over the operad of associative algebras under this definition) but since we only consider it in the context of cobar-construction type operads it should not lead to confusion. If $V$ is endowed with a symmetric inner product then, according to our convention, the operad $\End(\Pi V)$ is cyclic.

We also adopt the following convention. If $f\in\O(n)$ and $g_l\in\O(k_l)$ for $l=1,\ldots,i\leq n$ then
we denote by $f\circ(g_1\otimes\ldots \otimes g_i)$ the operation $(\ldots((f\circ_i g_i)\circ_{i-1}g_{i-1})\circ\ldots\circ_1 g_1)\in\O(k_1+\ldots+k_i+n-i)$. The following picture for $f\circ(g_1\otimes g_2\otimes g_3)$ illustrates this notation; here $f\in\O(5)$, $g_1\in\O(2)$, $g_2\in\O(3)$ and $g_3\in\O(1)$.
\[
\xymatrix
{
\ar@{-}[dr]&&\ar@{-}[dl]&\ar@{-}[dr]&\ar@{-}[d]&\ar@{-}[dl]&\ar@{-}[dl]&&\ar@{-}[ddllll]&&\ar@{-}[ddllllll]\\
&g_1\ar@{-}[drrr]&&&g_2\ar@{-}[d]&g_3\ar@{-}[dl]\\
&&&&f\ar@{-}[d]&&&\\
&&&&
}
\]
\subsection{Linearly topologized  dg algebras and  operads}
We often find it necessary to work with (dg) vector spaces supplied with a linear topology; in all such cases the topological dg vector spaces in question will be inverse limits of discrete dg vector spaces. For instance, an inverse limit of finite-dimensional dg vector spaces is what we call a formal dg vector space but we will see other examples as well. Correspondingly, algebraic structures on topological vector spaces have to be compatible with the topology. An instance of such an algebraic structure is a \emph{formal non-unital cdga} which is, by definition, an inverse limit of (non-unital) finite-dimensional nilpotent cdgas; note that this condition is stronger than simply being
a cdga in formal dg vector spaces. We will normally consider \emph{unital} formal cdgas (omitting the adjective `unital') that are the result of adjoining units to non-unital formal cdgas. The augmentation ideal in a formal cdga $A$ will be denoted by $A_+$.

For a dg operad $\O$ we will denote by $\O[x]$ the operad obtained from $\O$ by freely adjoining a variable $x$. The context will always make clear
the arity $n$ of the operation $x$, and it will be assumed that $S_n$ acts trivially on $x$. The notation $\O[[x]]$ will stand for the completion of $\O[x]$ at the operadic ideal $(x):\O[[x]]:=\underleftarrow{\lim}\, \O[x]/x^n$. Note that the operad $\O[[x]]$ is topological, in the sense that every dg vector space $\O(n),\, n=1,2,\ldots$ is topological and all operadic compositions are continuous maps.

Whenever we have homomorphisms between algebraic objects with linear topology (dg vector spaces, algebras or operads) these will be assumed to be continuous without mentioning this explicitly. For instance, an \emph{algebra} over a topological dg operad $\O$ is a \emph{continuous} map of operads
$\O\to\End(\Pi V)$.

Next, let $V$ be a dg vector space, $A$ be a formal cdga and $\O$ be a (possibly topological) dg operad. Then we refer to
a (continuous) map of operads $\O\to A_+\otimes\End(\Pi V)$ as an \emph{$A$-linear $\O$ algebra structure} on $V$; note that $A_+\otimes\End(\Pi V)$ is a topological operad since $A$ is a topological algebra.
\subsection{$L_\infty$ and $A_\infty$ operads and their algebras}
The $L_\infty$ and $A_\infty$ operads are usually introduced so that the operation $m_1$ is treated as a differential on the underlying space
of the corresponding algebra. In our treatment we view $m_1$ as part of the operadic structure.
\begin{defi}\
\begin{enumerate}
\item
The non-symmetric operad $A_\infty$ is freely generated by \emph{odd} operations $m_n\in A_\infty(n),\, n=1,2,\ldots$, with the usual cobar-differential
\[d(m_n)=\sum_{k=1}^{n}\sum_{i=1}^k m_k\circ_i m_{n-k+1}.\]
Note that we allow $m_1$ in the definition of $A_\infty$; thus for a vector space $V$ (without a differential) an operad map $A_\infty\to \End(\Pi V)$ is
an $A_\infty$ algebra with possibly non-zero $m_1$.

In common with any non-symmetric operad, $A_\infty$ gives rise to a usual (symmetric) operad $\{A_\infty(n)\otimes\ground[S_n]\}$. Abusing notation, we will denote the latter operad also by $A_\infty$; the context will always makes clear which of the two versions is being considered.

The operad $a_\infty$ is defined as the quotient of $A_\infty$ by the ideal $(m_1)$.
\item
The operad $L_\infty$ is a suboperad in (the symmetric version of) the operad $A_\infty$ generated by the elements $m_n^\prime:=\sum_{\sigma\in S_n}\sigma m_n$, so that the action of $\sigma\in S_n$ on $m_n^\prime$ is trivial. The differential in $L_\infty$ is given by the formula
\[d(m'_n)=\sum_{k=1}^{n}\sum_{\sigma \in \operatorname{Sh}(n-k+1,k-1)}  \sigma \left(m'_k\circ_1 m'_{n-k+1}\right),\]
where the inner sum is taken over the set of $(n-k+1,k-1)$ shuffles in $S_n$.
Note that we allow $m_1^\prime$ in the definition of $L_\infty$; thus for a vector space $V$ (without a differential) an operad map $L_\infty\to \End(\Pi V)$ is
an $L_\infty$ algebra with possibly non-zero $m_1^\prime$.

Later on we will omit the superscript $'$ and use the symbols $m_n$ to denote the $L_\infty$ as well as the $A_\infty$ operations; the context will always allow one to recover the correct meaning. Note that any $A_\infty$ algebra can be viewed as an $L_\infty$ algebra via this map;
this is an infinity version of the commutator Lie algebra associated to an associative algebra.

The operad $l_\infty$ is defined as the quotient of $L_\infty$ by the ideal $(m_1)$.
\end{enumerate}
\end{defi}
\begin{rem}\
\begin{enumerate}
\item
The operad $A_\infty$ is the cobar-construction of the operad $\Ass_e$, the unital version of the operad governing associative algebras.
Recall that the (symmetric) operad $\Ass_e$ has $\Ass_e(n)=\ground[S_n]$, $n=1,2,\ldots$. The operad $\Ass_e$ maps to the (unital) operad $\Comm_e$ for which $\Comm_e(n)=\ground$, $n=1,2,\ldots$, governing commutative algebras. It is clear from this description that the operad $L_\infty$ is the cobar-construction of $\Comm_e$ and thus, is free on the $S_n$-invariant operations $m_n^\prime\in L_\infty(n)$ that are dual to the operators in $\Comm_e(n)$ representing the $n$-fold iterated multiplication.
\item
Since $A_\infty$ and $L_\infty$ are cobar-constructions of unital operads, they are acyclic, in contrast with $a_\infty$ and $l_\infty$ (which are cobar-constructions of \emph{non-unital} $\Ass$ and $\Comm$ respectively).
This fact, however, \emph{does not} mean that the homotopy theory of algebras over the operads $A_\infty$ and $L_\infty$ is trivial. Indeed, even though the operads $A_\infty, L_\infty$ are themselves homologically trivial, they are not \emph{cofibrant} (owing to their unary part) and thus, maps out of them cannot be viewed as homotopically trivial.
\item The operads $A_\infty$ and $L_\infty$ can be obtained from $a_\infty$ and $l_\infty$ with the help of a certain general construction which will be discussed in Section \ref{secw}. The relationship between $A_\infty$ and $L_\infty$ algebras and their lower case counterparts will also be  considered in some detail there.
\end{enumerate}
\end{rem}
Specializing our general definitions of operadic algebras we arrive at the notions of  $A_\infty$ and $L_\infty$ algebras, and of $A$-linear
$A_\infty$ and $L_\infty$ algebras over a cdga.

\begin{defi}Let $V$ be a dg vector space and $A$ be a formal cdga.
\begin{enumerate}
\item
\begin{enumerate}
\item
An $L_\infty$ algebra structure on $V$ is a map of operads $L_\infty\to \End(\Pi V)$; it is thus a collection of multilinear maps
\[
m=\{m_n:(\Pi V)^{\otimes n}\to \Pi V, n=1,2,\ldots\}
\]
that are symmetric and satisfy the usual $L_\infty$ identities. The pair $(V,m)$ will be referred to as an $L_\infty$ algebra.
\item
An $A_\infty$ algebra structure on $V$ is a map of operads $A_\infty\to \End(\Pi V)$; it is thus a collection of multilinear maps
\[
m=\{m_n:(\Pi V)^{\otimes n}\to \Pi V, n=1,2,\ldots\}
\]
that satisfy the usual $A_\infty$ identities. The pair $(V,m)$ will be referred to as an $A_\infty$ algebra.
\end{enumerate}
\item
\begin{enumerate}
\item
An $A$-linear $L_\infty$ algebra structure on $V$ is a map of operads $L_\infty\to A_+\otimes\End(\Pi V)$; it is thus a collection of multilinear maps
\[
m=\{m_n:(\Pi V)^{\otimes n}\to A_+\otimes\Pi V, n=1,2,\ldots\}
\]
that are symmetric and satisfy the usual $L_\infty$ identities. The pair $(V,m)$ will be referred to as an $A$-linear $L_\infty$ algebra.
\item
An $A$-linear $A_\infty$ algebra structure on $V$ is a map of operads $A_\infty\to A_+\otimes\End(\Pi V)$; it is thus a collection of multilinear maps
\[
m=\{m_n:(\Pi V)^{\otimes n}\to A_+\otimes\Pi V, n=1,2,\ldots\}
\]
that satisfy the usual $A_\infty$ identities. The pair $(V,m)$ will be referred to as an $A$-linear $A_\infty$ algebra.
\end{enumerate}
\end{enumerate}
\end{defi}
\begin{rem}
The operation $m_1$ gives $V$ the structure of a dg vector space, which is distinct from the one associated with the internal differential on $V$.
Thus, our definition of an $L_\infty$ algebra is seemingly more general than the usual one (unless the internal differential on $V$ vanishes). On the other hand, if $m_1$ acts trivially on $V$ the given $L_\infty$ algebra structure on $V$ descends to an $l_\infty$ algebra structure recovering the usual notion.  We shall return to the question of the distinction between $L_\infty$ and $l_\infty$ algebras in Section~\ref{secw}. Similar remarks apply to $A_\infty$ and $a_\infty$ structures.
\end{rem}
One can also view an $L_\infty$ or $A_\infty$ structure on $V$ as a homological vector field on $\Pi V$; this point of view will be recalled in section \ref{formal}.  We refer for a more detailed analysis of this to
\cite{CL} and references therein.
\begin{rem}
The operads $L_\infty$ and $A_\infty$ are \emph{cyclic}, cf. \cite{GeK} concerning this notion, and every statement about them made in this subsection
has an obvious cyclic analogue. For example, a cyclic $L_\infty$ algebra is a map of cyclic operads $L_\infty\to\End(\Pi V)$ where $V$ is a vector space with a linear symplectic structure giving $\End(\Pi V)$ the structure of a cyclic operad.
\end{rem}
\subsection{Maurer-Cartan elements in $L_\infty$ and $A_\infty$ algebras}
\begin{defi}\label{defMC}Let $(V,d)$ be a dg vector space and $A$ be a formal cdga.
\begin{enumerate}
\item
Let $m$ be an $A$-linear $L_\infty$-structure on $V$. Then an even element $\xi\in A_+\otimes \Pi V$ is
\emph{Maurer-Cartan} (MC) if $(d_A\otimes\id+\id\otimes d_{\Pi V})(\xi)+\sum_{i=1}^\infty
\frac{1}{i!}m_i(\xi^{\otimes i})=0$.
\item
Let $m$ be an $A$-linear $A_\infty$-structure on $V$ and $A$. Then an even element $\xi\in A_+\otimes \Pi V$ is
\emph{Maurer-Cartan} (MC) if $(d_A\otimes\id+\id\otimes d_{\Pi V})(\xi)+\sum_{i=1}^\infty m_i(\xi^{\otimes
i})=0$.
\end{enumerate}
The set of MC elements in $A_+\otimes \Pi V$ is
denoted by $\MC(((V,d),m),A)$ but we will usually shorten it to $\MC((V,m),A)$ or even $\MC(V,A)$ leaving $d$ and $m$ understood.
\end{defi}
\begin{rem}
It would, perhaps, be notationally more natural (in the context of the above definition) to consider the set $\MC(V,B)$ where
$B$ is a formal cdga \emph{over $A$}, however we will never need this level of generality.
\end{rem}
 We refer to \cite{CL} for some basic facts on
MC elements in $L_\infty$ and $A_\infty$ algebras. In some situations it is
useful to consider MC elements with values in not necessarily formal cdgas, particularly in $\ground$, however in general
it makes no sense since the MC condition involves a possibly divergent series. Note that in the case when $V$ is a dgla or a dga
such a problem does not arise and the MC constraint takes the form of the equation of a flat connection $d(\xi)+\frac{1}{2}[\xi,\xi]=0$.
\section{Operads associated to MC elements}
Let $(V,m)$ be an $L_\infty$ or $A_\infty$ algebra and $\xi$ be an MC element in it with values in a formal cdga $A$. We will describe the operad governing this collection of data.
\begin{defi}\label{defLin}\
\begin{enumerate}
\item
Let $\L$ be the operad $L_\infty[[x]]$, where the generator $x\in\L(0)$. The differential in $L_\infty\subset\L$ is the standard cobar differential whereas
\begin{equation}\label{linfty}
d(x)=\sum_{n=1}^\infty \frac{1}{n!}m_n\circ
(\underbrace{x\otimes\ldots \otimes x}_n)
\end{equation}
\item
Let $\A$ be the non-symmetric operad $A_\infty[[x]]$, where the generator $x\in\A(0)$. The differential in $A_\infty\subset\A$ is the cobar differential whereas
\begin{equation}\label{ainfty}
d(x)=\sum_{n=1}^\infty m_n\circ(\underbrace{x\otimes\ldots \otimes x}_n)
\end{equation}
\end{enumerate}
\end{defi}
\begin{lem}
The differential $d$ in both $\L$ and $\A$ satisfies $d^2=0$.
\end{lem}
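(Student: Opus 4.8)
The plan is to use that $d$ is, by construction, an odd derivation of $\L$ (respectively of $\A$): it is prescribed on the generators $m_n$ and $x$ and extended by the graded Leibniz rule. Consequently $d^{2}=\tfrac12[d,d]$ is again a derivation, now of even degree, so it vanishes on all of $\L$ (respectively $\A$) as soon as it vanishes on the generators. One should note at the outset that $x$ is forced to be an \emph{even} generator, since comparing degrees on the two sides of \eqref{linfty} (or \eqref{ainfty}) gives $|x|=0$; this is what keeps the signs manageable throughout.

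On the generators $m_n$ there is nothing new to do: $d(m_n)$ lies in the suboperad $L_\infty$ (respectively $A_\infty$), and on that suboperad $d$ is the cobar differential, so $d^{2}(m_n)$ is the square of the cobar differential applied to $m_n$, which vanishes because $L_\infty$ and $A_\infty$ are dg operads (being cobar constructions of $\Comm_e$ and $\Ass_e$; indeed they are acyclic). The real content is therefore $d^{2}(x)=0$. I would compute $d^{2}(x)$ by applying Leibniz to the right-hand side of \eqref{linfty}, using that each $m_n$ is odd while $x$ is even, so that moving $d$ across an $m_n$ produces a sign $(-1)$ and moving it across a copy of $x$ produces none. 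This yields
\[
d^{2}(x)=\sum_{n\ge 1}\frac1{n!}\Bigl[(dm_n)\circ(x^{\otimes n})
-\sum_{j=1}^{n}m_n\circ\bigl(x^{\otimes(j-1)}\otimes dx\otimes x^{\otimes(n-j)}\bigr)\Bigr],
\]
and the analogous identity for $\A$ with every factor $\frac1{n!}$ deleted.

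Into this one substitutes the cobar formula for $dm_n$ and the defining formula for $dx$. Both bracketed sums then become sums of operations of the single shape $(m_a\circ_i m_b)\circ(x^{\otimes(a+b-1)})$ — decorated, in the $\L$ case, by elements of $S_{a+b-1}$, which however act trivially once all inputs are set equal to the one even element $x$, and the choice of input slot $j$ in the second sum likewise becomes immaterial by $S_n$-invariance of $m_n$. For $\A$ the two groups of terms then match bijectively, with opposite signs, under the reindexing $a=k,\ b=n-k+1$ versus $a=n,\ b=\ell$, so $d^{2}(x)=0$ at once. For $\L$ the same reindexing works, but one must also check that the numerical coefficients agree: the first group acquires $\tfrac1{n!}\,\#\operatorname{Sh}(n-k+1,k-1)=\tfrac1{(k-1)!\,(n-k+1)!}$, the second acquires $\tfrac{n}{n!\,\ell!}=\tfrac1{(n-1)!\,\ell!}$, and these are equal after the substitution; so the two groups cancel and $d^{2}(x)=0$ here too. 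The one step demanding care is precisely this last bit of bookkeeping in the $\L$ case — the shuffle count, the factorials coming from $\tfrac1{n!}$ and from $d(x)$, and the reindexing — together with the sign check that everything in sight is $+$ except the single overall minus separating the two groups (which is exactly why they cancel rather than reinforce). Everything else is formal; alternatively, the $\L$ statement may be transported from the $\A$ statement along the evident differential-preserving operad map $L_\infty[[x]]\to A_\infty[[x]]$ (symmetric version) sending $m'_n\mapsto\sum_{\sigma\in S_n}\sigma m_n$ and $x\mapsto x$.
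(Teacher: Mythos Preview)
Your argument is correct and follows essentially the same route as the paper: reduce to $d^{2}(x)=0$, expand via Leibniz, and in the $\A$ case match the two groups of terms bijectively (the paper phrases this as a cancellation of two-vertex trees, which is the same bijection). For $\L$ the paper simply invokes the injection $L_\infty[[x]]\hookrightarrow A_\infty[[x]]$ that you mention as an alternative, so your direct coefficient bookkeeping is extra but consistent with the paper's approach.
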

\begin{proof}
Let us treat the $\A$ case first. It suffices to prove that $d^2(x)=0$. We have, taking into account that the operations $m_n$ are odd,
\begin{align*}
d^2(x)&=d\left(\sum_{n=1}^\infty m_n\circ(x\otimes\ldots \otimes x)\right)\\
&=\sum_{n=1}^\infty d(m_n)\circ(x\otimes\ldots \otimes x)-\sum_{n=1}^\infty m_n\circ\left(d(x)\otimes x\otimes\ldots \otimes x+\ldots+x\otimes\ldots\otimes x\otimes d(x)\right).
\end{align*}
The last expression involves sums of planar trees having precisely one edge connecting vertices of valence greater than one. An example of such a tree is depicted below.
\[
\xymatrix
{
x\ar@{{*}-}[drr]&x\ar@{{*}-}[dr]&x\ar@{{*}-}[d]&x\ar@{{*}-}[dl]\\
&&m_4\ar@{-}[ddr]\\
&x\ar@{{*}-}[drr]&x\ar@{{*}-}[dr]&x\ar@{{*}-}[d]&x\ar@{{*}-}[dl]&x\ar@{{*}-}[dll]&\\
&&&m_6\ar@{-}[d]\\
&&&}
\]

Moreover every such tree contributes precisely one summand to $\sum_{n=1}^\infty d(m_n)\circ(x\otimes\ldots \otimes x)$ and one summand to $m_n\circ(d(x)\otimes x\otimes\ldots \otimes x+\ldots+x\otimes\ldots\otimes x\otimes d(x))$. These contributions cancel and we get $d^2(x)=0$.

Let us now consider the $\L$ case. Note that the canonical injection of operads $L_\infty\to A_\infty$ extends naturally to an inclusion
$L_\infty[[x]]\to A_\infty[[x]]$ and this inclusion is compatible with the operators $d$ on $L_\infty[[x]]$ and $A_\infty[[x]]$ defined by formulas (\ref{linfty}) and (\ref{ainfty}). Since $d$ squares to zero in $A_\infty[[x]]$ it follows that it also squares to zero in $L_\infty[[x]]$.
\end{proof}
The following result describes explicitly algebras over $\A$ and $\L$.
\begin{theorem}\label{twalg}
Let  $A$ be a formal cdga.
\begin{enumerate}
\item
Let $V$ be an $A$-linear $L_\infty$ algebra and $\xi\in \MC(V,A)$. Then $V$ has the structure of an $A$-linear $\L$ algebra. Conversely, an $A$-linear algebra over $\L$ gives rise to an $A$-linear $L_\infty$ algebra $V$ together with a choice of an element in $\MC(V,A)$.
\item
Let $V$ be an $A$-linear $A_\infty$ algebra and $\xi\in \MC(V,A)$. Then $V$ has the structure of a $A$-linear $\A$ algebra. Conversely, an $A$-linear algebra over $\A$ gives rise to an $A$-linear $A_\infty$ algebra $V$ together with a choice of an element in $\MC(V,A)$.
\end{enumerate}
\end{theorem}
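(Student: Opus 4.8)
The plan is to unwind the definition of an $A$-linear algebra over the operad $\A = A_\infty[[x]]$ (and likewise over $\L$) directly from Definition~\ref{defLin}, and to check that the data and the constraint it imposes match exactly those of an $A_\infty$ (resp. $L_\infty$) algebra together with a Maurer–Cartan element as in Definition~\ref{defMC}. We treat the $\A$ case first; the $\L$ case will then follow by restriction along the inclusion $L_\infty[[x]]\hookrightarrow A_\infty[[x]]$ of Definition~\ref{defLin}, exactly as in the proof of the preceding lemma.

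First I would observe that since $\A$ is obtained from $A_\infty$ by freely adjoining a nullary generator $x$ and then completing, a continuous map of operads $\A\to A_+\otimes\End(\Pi V)$ is the same as a pair: (i) a continuous operad map $A_\infty\to A_+\otimes\End(\Pi V)$, i.e.\ an $A$-linear $A_\infty$ structure $m=\{m_n\}$ on $V$; together with (ii) the image of $x$, which is an element $\xi$ of $(A_+\otimes\End(\Pi V))(0) = A_+\otimes\Pi V$, of the appropriate (even) parity. Freeness guarantees that \emph{any} such $\xi$ extends uniquely to an operad map on the underlying (uncompleted) graded operad, and completeness of $A_+\otimes\End(\Pi V)$ — which holds because $A$ is a formal cdga, so $A_+$ is pro-nilpotent — guarantees the extension is continuous, i.e.\ passes to $A_\infty[[x]]$. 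The only remaining condition is compatibility with the differentials.

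The key step is therefore to compute what "compatibility with $d$" says on the generator $x$. Applying the operad map $\phi:\A\to A_+\otimes\End(\Pi V)$ to equation~\eqref{ainfty} and using that $\phi$ intertwines the cobar differential on $A_\infty$ with the $\End$-differential $d_A\otimes\id + \id\otimes d_{\Pi V}$, one gets precisely
\[
(d_A\otimes\id+\id\otimes d_{\Pi V})(\xi) + \sum_{n=1}^\infty m_n(\xi^{\otimes n}) = 0,
\]
which is the $A_\infty$ Maurer–Cartan equation of Definition~\ref{defMC}(2). Conversely, if $(m,\xi)$ satisfies this equation, the assignment $x\mapsto\xi$ together with $m$ defines a map on generators that respects $d$, and since $\A$ is a \emph{free} operad (with its cobar-type differential) the compatibility on all of $\A$ follows automatically once it holds on generators — this is the standard fact that a derivation on a free object is determined by, and can be prescribed arbitrarily subject to, its values on generators, and that a map of free dg operads commuting with $d$ on generators commutes with $d$ everywhere. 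The $\L$ case is identical, using the $L_\infty$ differential~\eqref{linfty} and the symmetrized operations; the resulting equation on $\xi$ is the $L_\infty$ Maurer–Cartan equation of Definition~\ref{defMC}(1), the $\tfrac{1}{n!}$ factors in~\eqref{linfty} matching those in the definition.

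I expect the only genuine subtlety — rather than an obstacle — to be the topological bookkeeping: one must check that the series defining $d(x)$ in $\A$ converges in the $(x)$-adic topology (which it does, term $n$ lying in $(x)^n$), that it maps to a convergent series in $A_+\otimes\End(\Pi V)$ (which it does since $A_+$ is pro-nilpotent, so $\xi^{\otimes n}\to 0$), and that "$A$-linear algebra over a topological operad" has been set up, as in the paper's conventions, so that continuity is automatic on the pro-nilpotent target. Granting the framework of Section~1, these are routine, and the proof reduces to the dictionary between generators-and-differential of $\A$, $\L$ and the explicit Maurer–Cartan equations. $\qed$
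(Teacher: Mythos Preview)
Your proposal is correct and follows essentially the same approach as the paper: both arguments reduce the claim to the observation that, since $\L$ (resp.\ $\A$) is freely generated over $L_\infty$ (resp.\ $A_\infty$) by the nullary $x$, a continuous operad map out of it amounts to an $L_\infty$ (resp.\ $A_\infty$) structure together with the image of $x$, and the single differential constraint $f(dx)=d(f(x))$ is precisely the Maurer--Cartan equation. Your additional remarks on the topological bookkeeping (convergence in the $(x)$-adic topology, pro-nilpotence of $A_+$) make explicit what the paper leaves implicit in its conventions, but the underlying argument is the same.
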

\begin{proof}
The structure of an $A$-linear $L_\infty$ algebra over $\L$ on $V$ is the same as a continuous map of operads
\[f:\L\to A_+ \otimes \End(\Pi V).\]
Having such a map is, in turn, equivalent to having a continuous map of operads $L_\infty\to A_+ \otimes \End(\Pi V)$ (that is, an $A$-linear $L_\infty$ structure on $V$) together with an element $f(x)\in A_+\otimes \Pi V$ for which $df(x)=f(dx)$. The latter equality is precisely the condition that $f(x)$ is an MC element in $A_+\otimes \Pi V$. The $A_\infty$ case is considered similarly.
\end{proof}
\begin{rem}
Note that the operads $\L$ and $\A$ are cyclic operads in an obvious way and Theorem \ref{twalg} continues to hold in the context of cyclic $L_\infty$ and $A_\infty$ algebras.
\end{rem}
We next discuss the analogues of the above results in the context of modular operads, cf. \cite{GK}. We do not insist that our modular operads $\{\O((g,n))\}$ satisfy the stability condition $2g+n-2\leq 0$. All our conventions concerning (topological) dg operads extend in an obvious way to (topological) dg modular operads.  The cyclic operads $L_\infty$ and $A_\infty$ governing the $L_\infty$ and $A_\infty$ algebras admit modular closures $\overline{L}_\infty$ and $\overline{A}_\infty$ respectively. Then $\overline{L}_\infty((n))$ is spanned by isomorphism classes of graphs with $n$ legs and $\overline{A}_\infty((n))$ is spanned by isomorphism classes of ribbon graphs with $n$ legs. Modular algebras over $\overline{L}_\infty$ and $\overline{A}_\infty$ are the same as cyclic algebras over $L_\infty$ and $A_\infty$; therefore one can speak about MC elements in them with values in formal cdgas. The modular closures of the operads $\L$ and $\A$ will be denoted by $\overline{\mathscr{L}}_\infty$  and $\overline{\mathscr{A}}_\infty$  respectively.

We then have the following analogue of Theorem \ref{twalg}.
\begin{theorem}
Let  $A$ be a formal cdga.
\begin{enumerate}
\item
Let $V$ be an $A$-linear $\overline{L}_\infty$ algebra and $\xi\in \MC(V,A)$. Then $V$ has the structure of an $A$-linear $\overline{\mathscr{L}}_\infty$ algebra. Conversely, an $A$-linear algebra over $\overline{\mathscr{L}}_\infty$  gives rise to an $A$-linear $\overline{L}_\infty$ algebra $V$ together with a choice of an element in $\MC(V,A)$.
\item
Let $V$ be an $A$-linear $\overline{A}_\infty$ algebra and $\xi\in \MC(V,A)$. Then $V$ has the structure of an $A$-linear $\overline{\mathscr{A}}_\infty$ algebra. Conversely, an $A$-linear  algebra over $\overline{\mathscr{A}}_\infty$  gives rise to an $A$-linear $\overline{A}_\infty$ algebra $V$ together with a choice of an element in $\MC(V,A)$.
\end{enumerate}
\end{theorem}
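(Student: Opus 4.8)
The plan is to deduce this from the cyclic form of Theorem~\ref{twalg} by means of the universal property of the modular closure. Recall that the modular closure $\overline{\P}$ of a cyclic operad $\P$ is the value at $\P$ of the left adjoint to the forgetful functor from modular operads to cyclic operads (see \cite{GK}), and that the same construction and adjunction are available in the dg, non-symmetric (ribbon) and topological settings, all morphisms being understood continuous. By definition $\overline{\mathscr L}_\infty$ and $\overline{\mathscr A}_\infty$ are the modular closures of $\L=L_\infty[[x]]$ and $\A=A_\infty[[x]]$; and for a vector space $V$ carrying the symplectic structure that makes $\End(\Pi V)$ a modular operad, $A_+\otimes\End(\Pi V)$ is again a topological modular operad. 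Hence the adjunction gives a natural bijection between continuous maps of dg modular operads $\overline{\mathscr L}_\infty\to A_+\otimes\End(\Pi V)$ and continuous maps of dg cyclic operads $\L\to A_+\otimes\End(\Pi V)$; that is, an $A$-linear $\overline{\mathscr L}_\infty$-algebra structure on $V$ is the same datum as an $A$-linear cyclic $\L$-algebra structure on $V$, and likewise in the $\A$ case.

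Granting this, the argument runs exactly as in the proof of Theorem~\ref{twalg}. A continuous map of dg cyclic operads $f\colon\L\to A_+\otimes\End(\Pi V)$ amounts to a continuous map of dg cyclic operads $L_\infty\to A_+\otimes\End(\Pi V)$ --- an $A$-linear cyclic $L_\infty$-algebra structure $m$ on $V$, equivalently (since modular algebras over $\overline{L}_\infty$ coincide with cyclic algebras over $L_\infty$) an $A$-linear $\overline{L}_\infty$-algebra structure --- together with the element $\xi:=f(x)\in A_+\otimes\Pi V$, subject to $df(x)=f(dx)$. Since $f$ is a morphism of operads and $d(x)$ is given by~(\ref{linfty}), this condition unwinds to $(d_A\otimes\id+\id\otimes d_{\Pi V})(\xi)+\sum_{n\geq1}\tfrac{1}{n!}m_n(\xi^{\otimes n})=0$, i.e.\ $\xi\in\MC(V,A)$. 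This establishes, in both directions, the claimed correspondence between $A$-linear $\overline{\mathscr L}_\infty$-algebra structures on $V$ and pairs consisting of an $A$-linear $\overline{L}_\infty$-algebra structure on $V$ and an MC element. The $\overline{\mathscr A}_\infty$ case is identical, with non-symmetric cyclic operads and ribbon graphs replacing their symmetric counterparts, with $d(x)$ given by~(\ref{ainfty}) (and no factorials), yielding the $A_\infty$ form of the MC equation of Definition~\ref{defMC}.

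The step requiring genuine care --- and the one I expect to be the main obstacle --- is the compatibility of the modular-closure construction with the $(x)$-adic completion: one must check that the modular closures of the completed operads $L_\infty[[x]]$ and $A_\infty[[x]]$ are the $(x)$-adic completions of the modular envelopes of $L_\infty[x]$ and $A_\infty[x]$; concretely, that $\overline{\mathscr L}_\infty\cong\overline{L}_\infty[[x]]$ and $\overline{\mathscr A}_\infty\cong\overline{A}_\infty[[x]]$, with $x$ the genus-zero one-legged generator, the cobar differential on $\overline{L}_\infty$ (resp.\ $\overline{A}_\infty$), and $d(x)$ given by~(\ref{linfty}) (resp.\ (\ref{ainfty})). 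For the non-completed operads this is immediate, since a left adjoint commutes with coproducts and with free constructions of cyclic operads, and the dg structure transports uniquely along the adjunction. For the completions one verifies that the $(x)$-adic filtrations correspond; the key observation is that $x$, having genus $0$ and a single leg, admits no self-gluing, and that every operadic composition and every genus-raising self-gluing can only attach further structure and never removes an occurrence of $x$, so the filtration by powers of $x$ is preserved by all modular operations. Once this is in place, nothing beyond the bookkeeping already carried out for Theorem~\ref{twalg} is needed.
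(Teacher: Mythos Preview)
The paper gives no proof for this theorem: it is stated immediately after Theorem~\ref{twalg}, framed as the modular analogue thereof, and closed with a bare \ensuremath{\square}. So there is nothing to compare against except the implicit claim that the argument of Theorem~\ref{twalg} carries over verbatim.

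Your proposal is correct and supplies precisely the bridge the paper leaves implicit: you invoke the universal property of the modular closure (left adjoint to the forgetful functor from modular to cyclic operads) to identify $A$-linear $\overline{\mathscr L}_\infty$-algebras with $A$-linear cyclic $\L$-algebras, and $A$-linear $\overline{L}_\infty$-algebras with $A$-linear cyclic $L_\infty$-algebras, after which the cyclic form of Theorem~\ref{twalg} (noted in the Remark following it) does all the work. This is the natural way to make the paper's ``analogue of Theorem~\ref{twalg}'' precise, and it is presumably what the authors have in mind.

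One comment on your final paragraph. You phrase the technical issue as compatibility of modular closure with $(x)$-adic completion, aiming for an identification $\overline{\mathscr L}_\infty\cong\overline{L}_\infty[[x]]$. But strictly speaking you do not need that identification for the proof: the paper \emph{defines} $\overline{\mathscr L}_\infty$ as the modular closure of $\L$, so the adjunction applies to $\L$ directly, and the only thing to check is that the left adjoint exists in the topological dg setting and that continuous modular maps out of it correspond to continuous cyclic maps out of $\L$. Your observation that $x$ has genus zero and a single leg, hence admits no self-gluing and its filtration degree is preserved by modular operations, is exactly what makes the completed adjunction go through; so the content is the same, but the logical dependence is slightly cleaner if you frame it as ``the adjunction holds for topological dg cyclic operads'' rather than ``closure commutes with completion''.
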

\noproof
\section{$L_\infty$ algebras of formal vector fields}\label{formal}
The purpose of this section is to describe $\A$ and $\L$ algebras in more geometric terms. Recall that $A_\infty$ and $L_\infty$ algebras are MC elements in certain (infinite dimensional) Lie algebras. It turns out that analogous structures governing $\A$ and $\L$ algebras are themselves $L_\infty$ algebras.
\subsection{Extensions of $L_\infty$ algebras} In order to introduce $L_\infty$ algebras of formal vector fields in a conceptual way we need the notion of an \emph{extension} of $L_\infty$ algebras which is of independent interest. First recall the geometric definition of an $L_\infty$ algebra, and of an $A_\infty$ algebra which we also have the opportunity to use later on.
\begin{defi}\
\begin{enumerate}
\item
Let $(V,d_V)$ be a dg vector space. An $L_\infty$ structure on $V$ is an odd element $m~\in~\Der(\hat{S}\Pi V^*)$ which has no constant term and satisfies the master equation $d_V(m)+\frac{1}{2}[m,m]~=~0$. The pair $(V,m)$ will be referred to as an $L_\infty$ algebra and the algebra $\hat{S}\Pi V^*$, supplied with the differential $d_V+m$, as its representing formal cdga. The internal differential $d_V$ will often be suppressed from the notation.
For two $L_\infty$ algebras $(V,m_V)$ an $(U,m_U)$ and $L_\infty$ map $V\to U$ is, by definition, a continuous map of its representing cdgas $\hat{S}\Pi U^*\to \hat{S}\Pi V^*$.
\item
Let $(V,d_V)$ be a dg vector space. An $A_\infty$ structure on $V$ is an odd element $m~\in~\Der(\hat{T}\Pi V^*)$ which has no constant term and satisfies the master equation $d_V(m)+\frac{1}{2}[m,m]~=~0$. The pair $(V,m)$ will be referred to as an $A_\infty$ algebra and the algebra $\hat{T}\Pi V^*$, supplied with the differential $d_V+m$, as its representing formal dga. The internal differential $d_V$ will often be suppressed from the notation.
For two $A_\infty$ algebras $(V,m_V)$ and $(U,m_U)$ an $A_\infty$ map $V\to U$ is, by definition, a continuous map of its representing dgas $\hat{T}\Pi U^*\to \hat{T}\Pi V^*$.
\end{enumerate}
\end{defi}
\begin{rem}We would like to make several comments on the above definition of $A_\infty$ and $L_\infty$ structures; to avoid repeating ourselves we confine those comments to the $L_\infty$ context, however it is clear that they also apply with appropriate modifications to the $A_\infty$ case.
\label{FvcRemarks}
\begin{enumerate}
\item
The above definition of an $L_\infty$ structure is, of course, equivalent to the one via the operad $L_\infty$ (and so, generalizes the traditional one to which it reduces when the internal differential $d_V$ on $V$ vanishes). The derivation $m:\hat{S}\Pi V^*\to \hat{S}\Pi V^*$ determining an $L_\infty$ structure on $V$ has the form $m=m_1+m_2+\ldots$ where $m_n,n=1,2,\ldots$ is the component of $m$ landing in $\hat{S}^n\Pi V^*$, so that $m_n:\Pi V^*\to (\Pi V^*)^{\otimes n}$. By abuse of notation we have the same symbol for this map as for the structure map $(\Pi V)^{\otimes n}\to \Pi V$ (to which it is dual). The notion of an $L_\infty$ morphism does not have a straightforward interpretation in terms of operads.
\item
Note that an $L_\infty$ structure, as defined above, is an \emph{odd} derivation. It follows that if one adopts our convention to view an MC element in a dgla $\g$ as an \emph{even} element in $\Pi \g$ (which ameliorates the sign issues) then one cannot consider an $L_\infty$ structure as an MC element. It is, of course, possible to recover consistency by applying the parity shift to the Lie algebra of formal vector fields but this creates other notational complications which do not justify that. This discrepancy underscores the different roles played by the `big' $L_\infty$ algebras (such as Lie algebras of formal vector fields) and `small' $L_\infty$ algebras which are themselves MC elements in those big Lie algebras.
\item
Just as in the operadic context the above definition admits an extension covering $A$-linear $L_\infty$ algebras where $A$ is a formal cdga. Namely, an $A$-linear $L_\infty$ algebra on $V$ is a continuous $A$-linear odd derivation of $A\otimes \hat{S}\Pi V^*$ having its image in $A_+\otimes \hat{S}\Pi V^*$,  which satisfies the master equation, and has no constant term; the notion of an $A$-linear $L_\infty$ algebra is defined similarly.
\item If $\hat{S}\Pi U^*\to \hat{S}\Pi V^*$ is a map of formal cdgas representing an $L_\infty$ map $V\to U$ then $f$ can be written as $f=f_1+f_2
+\ldots$ where $f_n:\Pi U^*\to(\Pi V^*)^{\otimes n}, n=1,2,\ldots$ is the component of $f$ of degree $n-1$ with respect to the tensor grading on $\hat{S}\Pi U^*$ and $\hat{S}\Pi V^*$. Note that the component $f_0$ is absent since there are no nontrivial algebra maps $\hat{S}\Pi U^*\to\ground$. However if $U$ and $V$ are $A$-linear $L_\infty$ algebras for a formal cdga $A$ then an $A$-linear $L_\infty$ map represented by a map of cdgas $f:\hat{S}\Pi U^*\to A\otimes\hat{S}\Pi V^*$ \emph{might} have a nontrivial $f_0$-part (corresponding to a cdga map $\hat{S}\Pi U^*\to A$. An $L_\infty$ map $f$ having nonzero $f_0$ will be referred to as a \emph{curved} $L_\infty$ map.
\end{enumerate}
\end{rem}
\begin{defi}\
\begin{enumerate}
\item
Let $(V,m)$ be an $L_\infty$ algebra; then $I\subset V$ is called an $L_\infty$ ideal in $V$ if for any $n$ we have $m_n(\Pi v_1,\ldots,\Pi v_n)\in\Pi I$ as long as at least one of the $v_i$'s belongs to $I$.
It is clear that in that case both $I$ and $U=V/I$ inherit structures of $L_\infty$ algebras.
\item If, in addition, the sequence of $L_\infty$ algebras
\[
I\to V\to U
\]
splits as a sequence of dg vector spaces then it is called an \emph{extension} of $U$ by $I$; the choice of a splitting is thus part of the data of an extension. The set of such extensions will be denoted by $\Ext_{L_\infty}(U,I)$.
\end{enumerate}
\end{defi}
\begin{rem}
It is useful to reformulate the notion of an extension of $L_\infty$ algebras in terms of its representing formal cdgas. Given an extension $e:I\to V\to U$ consider the formal cdgas $\hat{S}\Pi V^*$ and $\hat{S}\Pi U^*$ representing the $L_\infty$ algebras $V$ and $U$. Then the condition that $I$ is an $L_\infty$ ideal in $V$ is equivalent to saying that $\hat{S}\Pi U^*$ is a sub cdga in $\hat{S}\Pi V^*$.
\end{rem}
The set $\Ext_{L_\infty}(U,I)$ is contravariantly functorial with respect to $L_\infty$ maps in the variable $U$. Indeed, suppose that $e:I\to V\to U$ is an extension and let an $L_\infty$ map
$W\to U$ be represented by a map of formal cgdas $f:\hat{S}\Pi U^*\to\hat{S}\Pi W^*$ and consider the sequence of maps of formal cdgas
\begin{equation}\label{ext}
\hat{S}\Pi W^*\to \hat{S}\Pi W^*\otimes_{\hat{S}\Pi U^*}\hat{S}\Pi V^*\to\hat{S}\Pi I^*.
\end{equation}
Here $\hat{S}\Pi W^*$ is regarded as a ${\hat{S}\Pi U^*}$-module via $f$. Note that (\ref{ext}) represents an $L_\infty$ extension of $W$ by $I$. To see that note that there is a decomposition $V\cong I\oplus U$ associated to the extension $e$ and it follows that the representing cdga of $V$ then has the form $\hat{S}\Pi V^*\cong\hat{S}\Pi I^*\otimes\hat{S}\Pi U^*$. Thus, the sequence (\ref{ext}) can be rewritten as
\[
\hat{S}\Pi W^*\to \hat{S}\Pi I^*\otimes\hat{S}\Pi W^* \to\hat{S}\Pi I^*
\]
which clearly does represent an $L_\infty$ extension of $W$ by $I$. This extension will be denoted by $f^*(e)$.

The functor $U\mapsto\Ext_{L_\infty}(U,I)$ is analogous to the functor associating to a topological space the set of fibrations with a typical fiber having a fixed homotopy type (the $L_\infty$ algebra $I$ playing the role of the said fiber). The extension $f^*(e)$ described above should be viewed as an analogue of the induced fibration.  In light of this analogy the following result is not unexpected.
\begin{prop}\label{extMC}
The functor $U\mapsto \Ext_{L_\infty}(U,I)$ is represented by the dgla $\Der(\hat{S}\Pi I^*)$. In more detail: there is natural bijective correspondence between
the elements in $\Ext_{L_\infty}(U,I)$ and $L_\infty$ maps $U\to\Der(\hat{S}\Pi I^*)$.
\end{prop}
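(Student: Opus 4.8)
The plan is to unwind both sides of the claimed bijection in terms of representing formal cdgas and to match them. Recall that the dgla $\g := \Der(\hat S\Pi I^*)$ is itself an $L_\infty$ algebra (a big one, in the terminology of Remark~\ref{FvcRemarks}), and that an MC element in $\g$ is precisely an $L_\infty$ structure on $I$, which is part of the given data. By the definition of $L_\infty$ maps in terms of representing cdgas, an $L_\infty$ map $U \to \g$ is a continuous cdga map $\hat S\Pi\g^* \to \hat S\Pi U^*$. However, since $\g$ is a Lie algebra (concentrated in the relevant sense), $\hat S\Pi\g^*$ is its Chevalley--Eilenberg complex, and such maps are well understood: an $L_\infty$ map from an $L_\infty$ algebra $U$ to a dgla $\g$ equipped with a chosen MC element $m_I$ is the same as an $A_+$-style twisted MC element in an associated complex, but more usefully here, it is a collection of maps $\{U \to \g\}$ assembled into a single cdga morphism. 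So the first step is to rewrite "$L_\infty$ map $U \to \Der(\hat S\Pi I^*)$" explicitly as such a cdga map, keeping careful track of the fact that the MC element of $\g$ is fixed to be $m_I$ (the $L_\infty$ structure on $I$).

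Second, I would analyze $\Ext_{L_\infty}(U,I)$ using the reformulation already given in the excerpt. An extension $e : I \to V \to U$, together with its chosen splitting $V \cong I \oplus U$ of dg vector spaces, corresponds to a formal cdga structure on $\hat S\Pi V^* \cong \hat S\Pi I^* \otimes \hat S\Pi U^*$ (with the internal differentials and tensor grading already pinned down by $I$, $U$ and the splitting) such that $\hat S\Pi U^* \hookrightarrow \hat S\Pi V^*$ is a sub-cdga and $\hat S\Pi V^* \twoheadrightarrow \hat S\Pi I^*$ is a quotient cdga inducing the given $L_\infty$ structure on $I$. Concretely, the differential on $\hat S\Pi I^* \otimes \hat S\Pi U^*$ is determined by a derivation which must restrict correctly on the $\hat S\Pi U^*$ factor and reduce correctly modulo the ideal generated by $\Pi U^*$; the freedom is exactly a choice, for each generator in $\Pi U^*$, of an element of $\hat S\Pi I^* \otimes \hat S\Pi U^*$ (or rather the appropriate graded piece), i.e. an $\hat S\Pi U^*$-linear derivation of $\hat S\Pi I^* \otimes \hat S\Pi U^*$, which is the same as a map $\hat S\Pi U^* \to \Der(\hat S\Pi I^*)$-valued datum. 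The master equation $d^2=0$ for the total differential then translates, by a direct computation, into the statement that this datum is a cdga map $\hat S\Pi\g^* \to \hat S\Pi U^*$, i.e. an $L_\infty$ map $U \to \g$.

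Third, I would check that the two constructions are mutually inverse and natural in $U$. Given an $L_\infty$ map $\phi : U \to \g$, one builds the extension by equipping $\hat S\Pi I^* \otimes \hat S\Pi U^*$ with the differential assembled from $d_I + m_I$, $d_U$, and the $\phi$-twisting term; conversely an extension yields $\phi$ by reading off the twisting term as above. Naturality in $U$ should follow by comparing the pullback $f^*(e)$ described before the statement with precomposition $\phi \circ f$: both amount to precomposing the cdga map $\hat S\Pi\g^* \to \hat S\Pi U^*$ with $f : \hat S\Pi U^* \to \hat S\Pi W^*$, and the explicit description of $f^*(e)$ via $\hat S\Pi W^* \to \hat S\Pi I^* \otimes \hat S\Pi W^* \to \hat S\Pi I^*$ already given makes this transparent.

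The main obstacle I expect is the bookkeeping in step two: verifying that the master equation for the total differential on $\hat S\Pi I^* \otimes \hat S\Pi U^*$ decomposes cleanly into (i) the master equation for $m_I$ (automatic, already assumed), (ii) the master equation for $m_U$ (automatic), and (iii) precisely the condition that the twisting datum is a chain map $\hat S\Pi\g^* \to \hat S\Pi U^*$ — with no leftover cross-terms or sign discrepancies. This requires being careful about the odd/even parity conventions flagged in Remark~\ref{FvcRemarks}(2) (an $L_\infty$ structure is an \emph{odd} derivation, whereas MC elements are taken even after parity shift), and about the fact that the relevant derivations of $\hat S\Pi I^*$ are $\hat S\Pi U^*$-linearly extended. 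Once the pieces are disentangled, the identification is forced and the naturality is essentially formal.
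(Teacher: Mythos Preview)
Your approach is essentially the paper's: use the splitting to write $\hat S\Pi V^*\cong \hat S\Pi I^*\otimes\hat S\Pi U^*$, observe that the extra data in $m_V$ beyond $m_U$ is an $\hat S\Pi U^*$-linear odd derivation of $\hat S\Pi I^*\otimes\hat S\Pi U^*$, and translate $m_V^2=0$ into the $L_\infty$-map condition. The paper streamlines your step two by naming the common target explicitly: both the extension data and the $L_\infty$ map are identified with an element of $\MC\bigl(\Der(\hat S\Pi I^*),\,\hat S\Pi U^*\bigr)$, which saves you from having to chase the master equation through the cdga-map formulation of $L_\infty$ morphisms.

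One slip to fix: in your step two you say the freedom is ``a choice, for each generator in $\Pi U^*$, of an element of $\hat S\Pi I^*\otimes\hat S\Pi U^*$''. This is backwards. The condition that $\hat S\Pi U^*$ is a sub-cdga forces $m_V|_{\Pi U^*}=m_U$; the genuine freedom is in $m_V|_{\Pi I^*}$, which lands in $\hat S\Pi I^*\otimes\hat S\Pi U^*$ and is exactly an $\hat S\Pi U^*$-linear derivation of $\hat S\Pi I^*$, i.e.\ an element of $\hat S\Pi U^*\otimes\Der(\hat S\Pi I^*)$. Your subsequent clause (``i.e.\ an $\hat S\Pi U^*$-linear derivation\ldots'') is correct, so this is a local slip rather than a structural error.
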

\begin{proof}
Let $I\to V\to U$ be an extension; its dg splitting gives an isomorphism of dg vector spaces $V\cong I\oplus U$. We have the following sequence of representing formal cdgas: $\hat{S}\Pi U^*\to\hat{S}\Pi U^*\otimes\hat{S}\Pi I^*\to\hat{S}\Pi I^*$. The $L_\infty$ structure $m_V$ on $V=\hat{S}\Pi U^*\otimes\hat{S}\Pi I^*$ is determined by its restriction onto $\hat{S}\Pi I^*$. This restriction can be viewed as an $\hat{S}\Pi U^*$-linear odd derivation of $\hat{S}\Pi U^*\otimes\hat{S}\Pi I^*$, i.e. an element $\xi\in\hat{S}\Pi U^*\otimes\Der(\hat{S}\Pi I^*)$. The condition $m_V^2=0$ gets translated into the MC condition for $\xi$. This establishes a 1-1 correspondence between $L_\infty$ extensions of $U$ by $I$ and elements in $\MC(\Der(\hat{S}\Pi I^*), \hat{S}\Pi U^*)$. It only remains to observe that an $L_\infty$ map $U\to \Der(\hat{S}\Pi I^*)$ is also naturally identified with an element in $\MC(\Der(\hat{S}\Pi I^*), \hat{S}\Pi U^*)$.
\end{proof}
Let us see in more concrete terms how an $L_\infty$ map $U\to \Der(\hat{S}\Pi I^*)$ gives rise to an $L_\infty$ structure on $V=U\oplus I$. The latter is specified by the collection of structure maps $m_n:(\Pi V)^{\otimes n}\to V$. Since $m_n$ is symmetric it suffices to specify its value on the collection
$(\Pi u_1,\ldots,\Pi u_k,\Pi x_1,\ldots,\Pi x_l)$ where $u_1,\ldots, u_k\in U, x_1,\ldots,x_l\in I$ and $k+l=n$.

 The proof of Proposition \ref{extMC} yields the following corollary.
Here we adopt the convention of Remark~\ref{FvcRemarks}(1):
a derivation $\eta\in \Der(\hat{S}\Pi I^*)$
is determined by a collection of maps
$\Pi I^*\to (\Pi I^*)^{\otimes l}$, $l=0,1,2,\ldots$,
and we denote by $\eta_l:
(\Pi I)^{\otimes l}\to \Pi I$ the dual maps.

\begin{cor}
Let $f=(f_1,f_2,\ldots) $ be an $L_\infty$ map  $U\to \Der(\hat{S}\Pi I^*)$. Then the $L_\infty$ structure $m^f=(m_1^f,m_2^f,\ldots)$ on $U\oplus I$ associated with $f$ has the form:

\[
m_n^f(w_1,\ldots,w_k,x_1,\ldots,x_l)=(\Pi f_k(w_1,\ldots,w_k))_l(x_1,\ldots,x_l)
\]
where $w_1,\ldots, w_k\in \Pi U$, $x_1,\ldots,x_l\in I$, $1\leq l\leq n$, $0\leq k\leq n-1$ and $k+l=n$; the value of $m_n^f$ on other collections of arguments is zero. In the case $l=0,\, k=n$ the formula is modified as follows:

\[
m_n^f(w_1,\ldots, w_n)=(\Pi f_n(w_1,\ldots,w_n))_0+m^U_n(w_1,\ldots, w_n)
\]
where $m^U_n$ the is corresponding $L_\infty$ structure map for $U$.
\end{cor}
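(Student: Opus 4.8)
The plan is to unwind the identification from the proof of Proposition \ref{extMC}, reading the structure maps $m_n^f$ off the corresponding odd derivation of $\hat{S}\Pi V^*$ with $V=U\oplus I$.

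First I would recall that, under the dg splitting $V\cong U\oplus I$, the representing formal cdga is $\hat{S}\Pi V^*\cong\hat{S}\Pi U^*\otimes\hat{S}\Pi I^*$ and the $L_\infty$ structure derivation $m_V$ splits as $m_V=\tilde m_U+\xi$, where $\tilde m_U$ is the derivation of $\hat{S}\Pi V^*$ extending the structure $m_U$ on the sub-cdga $\hat{S}\Pi U^*$ and annihilating the generators $\Pi I^*$, and $\xi$ is the $\hat{S}\Pi U^*$-linear odd derivation corresponding to $f$ under Proposition \ref{extMC}. This splitting is forced: a derivation is determined by its restriction to the generators $\Pi V^*=\Pi U^*\oplus\Pi I^*$; the fact that $\hat{S}\Pi U^*$ is a sub-cdga gives $m_V|_{\Pi U^*}=m_U|_{\Pi U^*}$; and an $\hat{S}\Pi U^*$-linear derivation kills $\hat{S}\Pi U^*$, so $\xi|_{\Pi U^*}=0$. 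I would then expand $\xi=\sum_{k\ge 0}\xi^{(k)}$ by tensor degree in $\Pi U^*$, so that $\xi^{(k)}\in\hat{S}^k\Pi U^*\otimes\Der(\hat{S}\Pi I^*)$ is precisely the avatar of $f_k$ (with $\xi^{(0)}$ the induced $L_\infty$ structure on $I$, which the $k=0$ instance of the first formula recovers).

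Next I would dualize componentwise. By definition $m_n^f:(\Pi V)^{\otimes n}\to\Pi V$ is dual to the weight-$n$ part of $m_V|_{\Pi V^*}:\Pi V^*\to\hat{S}^n\Pi V^*$, and $\hat{S}^n\Pi V^*=\bigoplus_{k+l=n}\hat{S}^k\Pi U^*\otimes\hat{S}^l\Pi I^*$. The summand $\tilde m_U$ carries a $\Pi U^*$-generator into $\hat{S}\Pi U^*$ and kills $\Pi I^*$-generators, so it contributes only the term $m_n^U$ acting on all-$\Pi U$ inputs, with output in $\Pi U$. The summand $\xi^{(k)}$ carries a $\Pi I^*$-generator into $\hat{S}^k\Pi U^*\otimes\hat{S}\Pi I^*$ and kills $\Pi U^*$-generators; its $\hat{S}^l\Pi I^*$-component dualizes, by the convention of Remark \ref{FvcRemarks}(1), to $(\Pi f_k)_l:(\Pi U)^{\otimes k}\otimes(\Pi I)^{\otimes l}\to\Pi I$, contributing to $m_{k+l}^f$ on inputs consisting of $k$ elements of $\Pi U$ and $l$ of $\Pi I$, with output in $\Pi I$. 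Collecting the two sources: for $l\ge 1$ only $\xi^{(k)}$ survives and $m_n^f(w_1,\dots,w_k,x_1,\dots,x_l)=(\Pi f_k(w_1,\dots,w_k))_l(x_1,\dots,x_l)$; for $l=0$ both $\tilde m_U$ and $\xi^{(n)}$ contribute, giving $m_n^f(w_1,\dots,w_n)=(\Pi f_n(w_1,\dots,w_n))_0+m_n^U(w_1,\dots,w_n)$; and since $m_n^f$ is graded symmetric these formulas determine it on every homogeneous tuple of arguments, while no monomial of $m_V$ carries a $\Pi U^*$-generator to one containing a $\Pi I^*$-factor, which accounts for the vanishing claim and for the fact that mixed inputs have output purely in $\Pi I$.

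I expect the only genuine work, and hence the main obstacle, to be sign bookkeeping: one must check that the Koszul signs arising from dualizing the components of $m_V$ and of $\xi^{(k)}$, from commuting past the parity shifts $\Pi$, and from reordering mixed tuples under the $S_n$-action, are precisely those built into the conventions of Remark \ref{FvcRemarks}(1) that fix the dual maps $\eta_l$ of a derivation $\eta\in\Der(\hat{S}\Pi I^*)$ and the components $f_k$ of an $L_\infty$ map. With those conventions in force the verification is routine; the subtler point is organizational---tracking which parity-reversed incarnation of each map is meant, and confirming that in the $l=0$ formula the term $(\Pi f_n(w_1,\dots,w_n))_0$ lies in the $\Pi I$-summand while $m_n^U(w_1,\dots,w_n)$ lies in the $\Pi U$-summand, so the two contributions do not interfere.
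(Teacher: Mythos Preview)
Your proposal is correct and follows exactly the route the paper indicates: the paper gives no explicit argument for this corollary, merely stating that it is yielded by the proof of Proposition~\ref{extMC}, and your proof is precisely the unwinding of that identification. Your decomposition $m_V=\tilde m_U+\xi$ and the subsequent dualization by bidegree in $\hat{S}\Pi U^*\otimes\hat{S}\Pi I^*$ are the natural way to extract the structure maps, and your observations about where each summand lands (in particular that the $l=0$ term has its two contributions in the distinct summands $\Pi I$ and $\Pi U$) are accurate.
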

\noproof
\begin{rem}\
\begin{enumerate}
\item
Note that the dgla $\Der(\hat{S}\Pi I^*)$ is just the Chevalley-Eilenberg complex $\CE(I,I)$ of the $L_\infty$ algebra $I$ with coefficients in itself, cf. \cite{CL}.
\item
The classical notion of an extension of a Lie algebra by an ideal is, of course, a (very) special case of an $L_\infty$ extension. Indeed, suppose that $I\to V\to U$ is such an extension. It corresponds, via Proposition \ref{extMC} to an $L_\infty$ map $f=f_1+f_2+\ldots:U\to \Der(\hat{S}\Pi I^*)$ for which, firstly, all higher components $f_3,f_4,\ldots$ are zero, secondly, the image of $f_1:U\to \Der(\hat{S}\Pi I^*)$ consists of linear derivations $\operatorname{End}(\Pi I^*)\in\Der(\hat{S}\Pi I^*)$ and thirdly, the image of $f_2:U\otimes U\to \Der(\hat{S}\Pi I^*)$ consists of constant derivations $\Pi I^*\in\Der(\hat{S}\Pi I^*)$.  The subspace consisting of linear and constant derivations forms a Lie subalgebra in $ \Der(\hat{S}\Pi I^*)$ but it is not closed with respect to the differential unless the Lie bracket in $I$ is zero. In the latter case $f_1$ gives $I$ the structure of a $U$-module and $f_2$ determines a 2-cocycle of $U$ with values in $I$.  Extensions of this kind are classified, as expected, in terms of the second Chevalley-Eilenberg cohomology of $U$ with coefficients in $I$ but we will not elaborate this point further.
\end{enumerate}
\end{rem}
\subsection{Extensions related to Lie algebras or formal vector fields and MC elements}
The following example of an $L_\infty$ extension is what prompted us to develop the general theory in the first place.
\begin{example}\label{ex1}
Let $V$ be a dg  vector space, and $\c(V)$ be the dgla of continuous derivations of $\hat{S}\Pi V^*$ vanishing at zero. Let us consider the tautological embedding $f:\c(V)\hookrightarrow\Der(\hat{S}\Pi V^*)$; note that this is a usual map between two dglas. Associated to $f$ is an $L_\infty$ algebra structure on $\tilde{\c}(V):=\c(V)\oplus V$ defined as follows.

On the subspace $\Pi\c(V)\subset\Pi\tilde{c}(V)$ the product $m_2$ is just (the parity shift of) the bracket on the dgla $\c(V)$ and the higher $L_\infty$ products are zero. On the subspace $V$ the $L_\infty$ structure is zero. The remaining non-zero $L_\infty$ products are of the form $m_n(\Pi g,w_1,\ldots,w_{n-1})$ with $g\in\Hom((\Pi V)^{\otimes n-1},\Pi V)\subset \g(V)$ and $w_i\in \Pi V, i=1,2,\ldots, n-1$. Namely, $m_n(\Pi g, w_1,\ldots, w_{n-1})=g(w_1,\ldots,w_{n-1}).$
\end{example}
Varying the choice of a Lie subalgebra in $\Der(\hat{S}\Pi V^*)$ one can construct many other examples of nontrivial $L_\infty$ structures. We list some natural examples; the $L_\infty$ structure maps are given by the same formulas as in Example \ref{ex1}.
\begin{example}\
\begin{enumerate}
\item
Associated to the identity map $\Der(\hat{S}\Pi V^*)\to \Der(\hat{S}\Pi V^*)$ is an $L_\infty$ structure on $\Der(\hat{S}\Pi V^*)\oplus V$. This is, perhaps the most natural special case; however it will not play a role in the present paper.
\item Suppose that the dg vector space $V$ is endowed with a symmetric non-degenerate scalar product; this is equivalent to having a linear (super)symplectic structure on $\Pi V$. Denote by $\cs(V)$ the dgla of \emph{symplectic derivations} of $\hat{S}\Pi V^*$ vanishing at zero. Then the embedding of dglas $\cs(V)\hookrightarrow\Der(\hat{S}\Pi V^*)$ gives an $L_\infty$ structure on $\widetilde{\cs}(V):=\cs(V)\oplus V$.
\item Consider the dgla $\a(V)$ consisting of continuous derivations of $\hat{T}\Pi V^*$ vanishing at zero; since a derivation preserves the commutator ideal
any such derivation gives rise to a derivation of the corresponding dgla $\Der(\hat{S}\Pi V^*)$. This determines a dgla map $\a(V)\to \Der(\hat{S}\Pi V^*)$ and thus it gives rise to an $L_\infty$ structure on $\tilde{a}(V):=\a(V)\oplus V$.
\item Suppose that the dg vector space $V$ is endowed with a symmetric non-degenerate scalar product and denote by $\as(V)$ the corresponding
 dgla of symplectic derivations of $\hat{T}\Pi V^*$ (cf. for example \cite{HL'} concerning this notion). There is a dgla map $\as(V)\to \Der(\hat{S}\Pi V^*)$ defined as the composition $\as(V)\hookrightarrow \a(V)\to \Der(\hat{S}\Pi V^*)$. We will denote the corresponding $L_\infty$ structure on $\as(V)\oplus V$ by $\widetilde{\as}(V)$.
\end{enumerate}
\end{example}
For us the dglas $\c(V), \cs(V),\a(V)$ and $\cs(V)$ are important because MC elements in these are (by definition), respectively, $L_\infty$, cyclic $L_\infty$, $A_\infty$ and cyclic $A_\infty$ structures on the dg vector space $V$.
It turns out that the MC elements in the corresponding $L_\infty$ extensions of these Lie algebras also admit a very natural description.
\begin{theorem} Let $A$ be a formal cgda and $V$ is a dg vector space.
\begin{enumerate}
\item
An element in $\MC(\tilde{\c}(V),A)$ is an A-linear $L_\infty$ structure on $V$ together with a choice of an element in $\MC(V,A)$.
\item
An element in $\MC(\tilde{\cs}(V),A)$ is an A-linear cyclic $L_\infty$ structure on $V$ together with a choice of an element in $\MC(V,A)$.
\item
An element in $\MC(\tilde{\a}(V),A)$ is an A-linear $A_\infty$ structure on $V$  together with a choice of an element in $\MC(V,A)$.
\item
An element in $\MC(\tilde{\as}(V),A)$ is an A-linear cyclic $A_\infty$ structure on $V$ together with a choice of an element in $\MC(V,A)$.
\end{enumerate}
\end{theorem}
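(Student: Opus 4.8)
The plan is to prove all four parts by one argument applied to the $L_\infty$ algebra $\mathfrak h(V)\oplus V$, where $\mathfrak h(V)$ is one of $\c(V),\cs(V),\a(V),\as(V)$ and the $L_\infty$ structure is the one attached, via Proposition \ref{extMC}, to the canonical dgla map $\mathfrak h(V)\to\Der(\hat S\Pi V^*)$ (an inclusion in the first two cases, the restriction of a $\hat T$-derivation to the commutator quotient in the last two); throughout one works with the $A$-linear versions of these notions, cf. Remark \ref{FvcRemarks}(3). Using the dg splitting $\Pi(\mathfrak h(V)\oplus V)=\Pi\mathfrak h(V)\oplus\Pi V$ that is part of the extension data, write a candidate Maurer--Cartan element as $\eta+\zeta$ with $\eta\in A_+\otimes\Pi\mathfrak h(V)$ and $\zeta\in A_+\otimes\Pi V$, and project the MC equation onto the two summands.

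First I would treat the $\Pi\mathfrak h(V)$-component. By the explicit formulas of Example \ref{ex1} (equivalently, the corollary to Proposition \ref{extMC}), every structure map of $\mathfrak h(V)\oplus V$ having at least one argument in $\Pi V$ takes values in $\Pi V$; hence the only contributions to the $\Pi\mathfrak h(V)$-part of the MC equation come from the internal differential and from $m_1,m_2$ on $\Pi\mathfrak h(V)$, which together reproduce the differential and bracket of the dgla $\mathfrak h(V)$. This component is therefore exactly the Maurer--Cartan equation for $\eta$ in the dgla $\mathfrak h(V)$, and by the discussion preceding the theorem (read with coefficients in $A$), $\MC(\mathfrak h(V),A)$ is, by definition, the set of $A$-linear $L_\infty$ (resp. cyclic $L_\infty$, $A_\infty$, cyclic $A_\infty$) structures on $V$.

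Then I would treat the $\Pi V$-component. Expanding $\sum_n\frac1{n!}m_n\big((\eta+\zeta)^{\otimes n}\big)$ and using the symmetry of the $m_n$, the only surviving terms are those with a single $\eta$-argument, so this component equals the internal-differential terms for $\zeta$ plus $\sum_{k\ge1}\frac1{k!}\,m_{k+1}(\eta,\zeta^{\otimes k})$; decomposing $\eta$ into its arity components and using $m_{k+1}(\Pi g,w_1,\dots,w_k)=g(w_1,\dots,w_k)$ identifies this sum with $\sum_{k\ge1}\frac1{k!}\,m^{V}_k(\zeta^{\otimes k})$, where $m^V$ is the $A$-linear $L_\infty$ structure on $V$ encoded by $\eta$. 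Thus the $\Pi V$-component is precisely the left-hand side of the Maurer--Cartan equation of Definition \ref{defMC} for $\zeta$ in $(V,m^V)$, and reading the computation in both directions yields the asserted bijection (in the cyclic cases $\MC(V,A)$ is that of the underlying $L_\infty$ or $A_\infty$ algebra). The step I expect to be the main obstacle is the bookkeeping of combinatorial normalizations, particularly in the $\a(V)$ and $\as(V)$ cases: there the target MC equation (Definition \ref{defMC}(2)) carries no factor $\frac1{k!}$, and consistency holds precisely because the map $\a(V)\to\Der(\hat S\Pi V^*)$, and likewise for $\as(V)$, sends a $\hat T$-derivation to the \emph{unnormalized} symmetrization of its components, which supplies a compensating factor $k!$ when evaluated on $\zeta^{\otimes k}$. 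One must also keep careful track of the parity shifts $\Pi$ and the oddness of the operations $m_n$, and verify that every series occurring converges --- which it does arity by arity, $\mathfrak h(V)$ being a formal space and $A$ a formal cdga.
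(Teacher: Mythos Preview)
Your proposal is correct and follows essentially the same route as the paper: split the candidate MC element along the dg decomposition $\Pi\mathfrak h(V)\oplus\Pi V$, observe from the explicit formulas of Example~\ref{ex1} that the projection to $\Pi\mathfrak h(V)$ yields the master equation in the dgla $\mathfrak h(V)$ while the projection to $\Pi V$ yields the MC equation for $\zeta$ in the resulting $A$-linear infinity algebra. The paper carries this out only for $\tilde\c(V)$ and declares the remaining cases analogous; your additional care with the $\tfrac{1}{k!}$ bookkeeping in the $\a(V)$ and $\as(V)$ cases (resolved via the unnormalized symmetrization implicit in the map to $\Der(\hat S\Pi V^*)$, equivalently via the operad inclusion $L_\infty\hookrightarrow A_\infty$ sending $m_n'$ to $\sum_\sigma\sigma m_n$) is a welcome supplement to that hand-wave.
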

\begin{proof}
The proofs in all four cases are completely analogous and so we will restrict ourselves to   considering part (1). Let $g+\xi\in (A_+\otimes\Pi\c(V))\oplus (A_+\otimes\Pi V)=A_+\otimes\Pi\tilde{\c}(V)$ be an element in $\MC(\c(V),A)$. Taking into account the formulas for the $L_\infty$ structure maps in $\tilde{\c}(V)$ given in Example \ref{ex1} we see that the master equation for $g+\xi$,
\[
d(g+\xi)+m_1(g+\xi)+\frac{1}{2!}m_2(g+\xi,g+\xi)+\ldots+\frac{1}{n!}m_n(g+\xi,\ldots,g+\xi)+\ldots=0,
\]
is equivalent to two equations:
\[
d(g)+\frac{1}{2!}m_2(g,g)=0;
\]
\[
d(\xi)+m_2(g,\xi)+\frac{1}{2!}m_3(g,\xi,\xi)+\ldots+\frac{1}{(n-1)!}m_n(g,\xi,\ldots,\xi)+\ldots =0.
\]
The first of these equations is precisely the condition that $g=(g_1,g_2,\ldots)\in A_+\otimes\c(V)$ is an $A$-linear $L_\infty$ structure on $V$ (here $g_i\in \Hom(V^{\otimes i},A_+\otimes V$), whereas the second can be rewritten as
\[
d(\xi)+g_1(\xi)+\frac{1}{2!}g_2(\xi,\xi)+\ldots+\frac{1}{(n-1)!}g_{n-1}(\xi,\ldots,\xi) = 0,
\]
which is precisely the condition specifying $\xi$ as an MC element in the $L_\infty$ algebra with underlying space $A_+\otimes V$ whose $L_\infty$ structure is given by the collection $g=(g_1,g_2,\ldots)$.
\end{proof}
\section{Twisting in $L_\infty$ and $A_\infty$ algebras} Let us recall the standard definition of MC twisting following \cite{CL}. Our usual convention is to let the underlying graded vector space $V$ carry its own differential $d_V$; this results in a slight generalization of the treatment in op.cit.

\begin{defi}\label{closedtw}\
\begin{enumerate}
\item
Let $A$ be a formal cdga, $((V,d_V),m)$ be an $A$-linear $L_\infty$ algebra structure on a dg vector space $(V,d_V)$ and $\xi\in\MC(V,A)$, viewed as an even $A$-linear derivation of $A\otimes\hat{S}\Pi V^*$. Then $m^\xi:=e^\xi me^{-\xi}
-d_{A \otimes \Pi V}(\xi)$, considered as an odd $A$-linear derivation of $A\otimes \hat{S}\Pi V^*$, satisfies the master equation and is, therefore, an $A$-linear $L_\infty$ structure on $(V,d_V)$. This structure is called the \emph{twisted} $L_\infty$ structure (by the MC element $\xi$).
\item
Let $A$ be a formal cdga,  $((V,d_V),m)$ be an $A$-linear $A_\infty$ algebra structure on a dg vector space $(V,d_V)$, and $\xi\in\MC(V,A)$, viewed as an even $A$-linear derivation of $A\otimes\hat{T}\Pi V^*$. Then $m^\xi:=e^\xi me^{-\xi}-d_{A\otimes\Pi V}(\xi)$, considered as an odd $A$-linear derivation of $A\otimes \hat{T}\Pi V^*$ satisfies the master equation and is, therefore, an $A$-linear $A_\infty$ structure on $(V,d_V)$. This structure is called the \emph{twisted} $A_\infty$ structure (by the MC element $\xi$).
\end{enumerate}
\end{defi}\label{deftw}
We now give explicit formulas for the structure maps of twisted $L_\infty$ or $A_\infty$ algebras. As usual, we denote by $A$ a fixed formal cdga.
\begin{enumerate}
\item Let $V$ be an $L_\infty$ algebra and $\xi\in \MC(V,A)$. Then for $x_1,\ldots, x_n\in\Pi V,$
\begin{equation}\label{twist1}
m^\xi_n(x_1, \ldots, x_n)=\sum_{i=0}^\infty
\frac{1}{i!}m_{n+i}(\xi,\ldots, \xi, x_1,\ldots, x_n).
\end{equation}
\item Let $V$ be an $A_\infty$ algebra and $\xi\in \MC(V,A)$. Then for $x_1,\ldots, x_n\in\Pi V,$
\begin{equation}\label{twist2}
m^\xi_n(x_1, \ldots, x_n)=\sum_{i=0}^\infty m_{n+i}(\xi,\ldots,
\xi| x_1,\ldots, x_n),
\end{equation}
where
$$m_{n+i}(\xi,\ldots, \xi| x_1,\ldots, x_n):=\sum m_{n+i}(z_1,\ldots,z_{n+i}),$$
the sum running over
all ${n+i}\choose{n}$ sequences $z_1,\ldots,z_{n+i}$ containing $x_1,\ldots, x_n$ in order, together
with $i$ copies of  $\xi$.
\end{enumerate}
\begin{rem}

It is easy to see that $\eta\mapsto\eta-\xi$ establishes a one-to-one correspondence $\MC((V,m),A)\to\MC((V,m^\xi),A)$. In particular, $-\xi\in\MC((V,m^\xi),A)$.

\end{rem}
\subsection{Non-formal twisting in $L_\infty$ and $A_\infty$ algebras}\label{S:restricted}
We would like to outline briefly another approach to twisting in $L_\infty$ and $A_\infty$ algebras, which makes no use of a formal `coefficient' cdga. As far as we know this approach is new and it is of independent interest; however it plays a very minor role in the present paper and so we will not give too detailed a treatment.

Let $(V,d_V)$ be a dg vector space and $((V,d_V),m)$ be a corresponding $L_\infty$ structure or $A_\infty$ structure. Let $\xi\in\Pi V$ and consider, in the $L_\infty$ case, the master equation
\begin{equation}\label{openMC}
d_V(\xi)+\sum_{i=1}^\infty
\frac{1}{i!}m_i(\xi^{\otimes i})=0,
\end{equation}
which in the $A_\infty$ case should be replaced with
\begin{equation}\label{openMCA}
d_V(\xi)+\sum_{i=1}^\infty
m_i(\xi^{\otimes i})=0.
\end{equation}
Note that equations (\ref{openMC}) and (\ref{openMCA}) only make sense if the sums appearing in them are finite; a solution of either will be called an MC element in $V$ and the set of MC elements will be denoted by $\MC(V)$.  We will now describe some natural conditions on $V$ which are necessary for the existence of MC elements and for the notion of MC twisting. To this end consider any line (i.e. a one-dimensional subspace) $l\subset\Pi V$ and the ideal $I=I_l\subset S\Pi V^*$ generated by the annihilator of $l$ in $V^*$. For example, if we choose coordinates $x_i$ in $\Pi V$ so that $S\Pi V^*\cong \ground[x_1,x_2,\ldots]$ and $l$ consists of
the elements $(\alpha,0,0,\ldots)$, $\alpha\in\ground$, then the ideal $I$ is generated by $x_2,x_3,\ldots$. We denote by $\hat{S}_I\Pi V^*$ the completion of $S\Pi V^*$ at the ideal $I$ and by $\Der(\hat{S}_I \Pi V^*)$ the corresponding dgla of continuous derivations.

In the $A_\infty$ case we similarly consider the two-sided ideal $I=I_l$ in $T\Pi V^*$ generated by the annihilator of $l$ in $V^*$, the corresponding completion $\hat{T}_I \Pi V^*$ and the dgla $\Der(\hat{T}_I \Pi V^*)$. Note in passing that the dga $\hat{T}_I\Pi V^*$ and the cdga $\hat{S}_I \Pi V^*$ are both topological, but \emph{not} formal, owing to the fact that the ideal $I$ is not maximal. Because of that, nontrivial dga maps $\hat{T}_I \Pi V^*\to \ground$ and $\hat{S}_I\Pi V^*\to \ground$ may exist.
\begin{defi}\
\begin{enumerate}
\item
A \emph{restricted} $L_\infty$ structure on $V$ is an odd element $m\in\Der(\hat{S}_I\Pi V^*)$ which has no constant term and satisfies the master equation $d_V(m)+\frac{1}{2}[m,m]=0$. The triple $(V,l,m)$ will be referred to as a restricted $L_\infty$ algebra and the algebra $\hat{S}_I\Pi V^*$, supplied with the differential $d_V+m$, as its representing cdga.
\item
A \emph{restricted} $A_\infty$ structure on $V$ is an odd element $m\in\Der(\hat{T}_I\Pi V^*)$ which has no constant term and satisfies the master equation $d_V(m)+\frac{1}{2}[m,m]=0$. The triple $(V,l,m)$ will be referred to as a restricted $A_\infty$ algebra and the algebra $\hat{T}_I\Pi V^*$, supplied with the differential $d_V+m$, as its representing dga.
\end{enumerate}
\end{defi}
Then we have the following standard result whose proof is virtually identical to the proof of Proposition 2.2 of \cite{CL}.
\begin{prop}\
\begin{enumerate}
\item
Let $(V,l,m)$ be a restricted $L_\infty$ algebra. Then the set of solutions of (\ref{openMC}) with $\xi\in l$ is in 1-1 correspondence with (continuous) cdga maps
$\hat{S}_I\Pi V^*\to\ground$.
\item
Let $(V,l,m)$ be a restricted $A_\infty$ algebra. Then the set of solutions of (\ref{openMC}) with $\xi\in l$ is in 1-1 correspondence with (continuous) dga maps
$\hat{T}_I\Pi V^*\to\ground$.
\end{enumerate}
\end{prop}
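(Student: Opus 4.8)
The plan is to realise the asserted bijection as the assignment $\xi\mapsto\phi_\xi$, where $\phi_\xi$ is \emph{evaluation} at $\xi$ — geometrically, the inclusion of the point $\xi$ of the formal manifold $\Pi V$ as a zero of the homological vector field $d_V+m$. I will spell out part (1); part (2) is obtained verbatim by replacing $\hat{S}$ with $\hat{T}$ and stripping out the factorials, exactly as equation (\ref{openMCA}) differs from (\ref{openMC}). The overall structure of the argument follows the proof of Proposition~2.2 of \cite{CL}.

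First I would classify the continuous (graded) algebra maps $\phi\colon\hat{S}_I\Pi V^*\to\ground$, ignoring differentials. Since $S\Pi V^*$ is the free graded-commutative algebra on $\Pi V^*$, an algebra map $S\Pi V^*\to\ground$ is the same as a degree-$0$ continuous linear functional on $\Pi V^*$; since $V$ is discrete, $\Pi V^*$ is a formal space whose continuous linear dual is $\Pi V$, so such a map is evaluation $\phi_\xi$ at a uniquely determined (necessarily even) element $\xi\in\Pi V$. Continuity of $\phi_\xi$ for the $I$-adic topology forces $\ker\phi_\xi\supseteq I^N$ for some $N$; as $\ker\phi_\xi$ is a maximal, hence prime, ideal this yields $\phi_\xi(I)=0$, and since $I$ is generated by the annihilator of $l$ in $\Pi V^*$ this is precisely the condition $\xi\in l$ (the line $l$ being finite-dimensional, hence equal to its double annihilator). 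Conversely every $\xi\in l$ gives a $\phi_\xi$ vanishing on $I$, hence factoring through $S\Pi V^*/I$ and extending continuously to $\hat{S}_I\Pi V^*$. So the continuous algebra maps $\hat{S}_I\Pi V^*\to\ground$ are exactly the $\phi_\xi$ with $\xi\in l$.

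Next I would impose compatibility with the differential $d_V+m$ on the source (the differential on $\ground$ being zero); it suffices to test $\phi_\xi\circ(d_V+m)=0$ on the generators $t\in\Pi V^*$. Writing $m$ through its components $m_n^*\colon\Pi V^*\to S^n\Pi V^*$ dual to the structure maps $m_n$, and noting that evaluating a degree-$n$ monomial at $\xi$ amounts to pairing a symmetric tensor against $\xi^{\otimes n}$ (which is where the factors $\frac{1}{n!}$ enter), one computes that $\phi_\xi((d_V+m)(t))$ equals, up to the usual sign, $\langle t,\, d_V(\xi)+\sum_{n\ge 1}\frac{1}{n!}\,m_n(\xi^{\otimes n})\rangle$. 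Since $\Pi V^*$ separates the points of $\Pi V$, requiring this to vanish for all $t$ is exactly the master equation (\ref{openMC}). Thus $\phi_\xi$ is a cdga map if and only if $\xi$ solves (\ref{openMC}), which gives the desired correspondence.

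The step needing genuine care — and the only real difference from \cite{CL}, where a formal coefficient cdga makes every series automatically finite — is \emph{convergence}: one must check that $\phi_\xi\circ m$ is well defined on generators, equivalently that $\sum_n m_n(\xi^{\otimes n})$ is a finite sum, so that (\ref{openMC}) even makes sense. This is precisely what the restricted hypothesis $m\in\Der(\hat{S}_I\Pi V^*)$ — completion at the \emph{non-maximal} ideal $I$ rather than at a maximal ideal — is for: $m$ sends a generator $t$ into $\hat{S}_I\Pi V^*$, the only part of $S^n\Pi V^*$ lying outside $I$ is its one-dimensional piece $S^n l^*$, and continuity of $\phi_\xi$ (which kills some $\hat{I}^N$) forces all but finitely many of the scalars $\phi_\xi(m_n^*(t))$ to vanish; making this uniform in $t$, using that $m$ is a continuous derivation so that $m^{-1}(\hat{I}^N)\cap\Pi V^*$ is open in $\Pi V^*$, yields the finiteness of the MC sum. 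Modulo this bookkeeping and the sign conventions, both parts follow as above.
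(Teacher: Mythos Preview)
Your proposal is correct and follows exactly the approach the paper indicates: the paper gives no proof of its own, saying only that it is ``virtually identical to the proof of Proposition~2.2 of \cite{CL}'', and you explicitly structure your argument along those lines. Your identification of the bijection with evaluation at $\xi$, the reduction of differential-compatibility to the master equation on generators, and your isolation of convergence as the one genuinely new point (handled by the $I$-adic rather than maximal-ideal completion) are all on target; the final uniformity step, while terse, is the right idea --- continuity of $m$ forces the composite $\Pi V^*\to \hat{S}_I\Pi V^*\to S(l^*)$ to have finite-dimensional image, which is exactly what makes the MC sum finite for every $\xi\in l$.
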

\noproof
Given an MC element $\xi$ in a restricted $L_\infty$ or $A_\infty$ algebra $V$, the latter can be twisted by $\xi$.
\begin{defi}
\label{opentw}\
\begin{enumerate}
\item
Let  $(V,l,m)$ be a restricted $L_\infty$ algebra structure on a dg vector space $V$ and $\xi\in l$ be an MC-element in $V$, viewed as an even derivation of $\hat{S}_I\Pi V^*$. Then $m^\xi:=e^\xi me^{-\xi}
-d_{\Pi V}(\xi)$ considered as an odd derivation of $\hat{S}_I\Pi V^*$, satisfies the master equation and therefore determines a (restricted) $L_\infty$ structure on $V$. This structure is called the \emph{twisted} $L_\infty$ structure (by the MC element $\xi$).
\item
Let  $(V,l,m)$ be a restricted $A_\infty$ algebra structure on a dg vector space $V$ and $\xi\in l$ be an MC-element in $V$, viewed as an even derivation of $\hat{T}_I\Pi V^*$. Then $m^\xi:=e^\xi me^{-\xi}
-d_{\Pi V}(\xi)$ considered as an odd derivation of $\hat{T}_I\Pi V^*$, satisfies the master equation and therefore determines a (restricted) $L_\infty$ structure on $V$. This structure is called the \emph{twisted} $A_\infty$ structure (by the MC element $\xi$).
\end{enumerate}
\end{defi}
It is easy to see that formulas (\ref{twist1}) and (\ref{twist2}) still hold in the restricted context.
\subsection{Twisting automorphisms of Lie algebras of formal vector fields} Twisting in infinity-algebras by MC elements naturally gives rise to $L_\infty$ automorphisms in the $L_\infty$ algebras governing the corresponding structures.
\begin{theorem}Let $V$ be a dg vector space.
\begin{enumerate}
\item
Consider the automorphism $\tw$ of the functor $A\to\MC(\tilde{\c}(V),A)$ given by $\tw:(m,\xi)\mapsto (m^\xi,-\xi)$ where $A$ is a formal cdga. This automorphism induces an $L_\infty$ automorphism of $\tilde{\c}(V)$  (which will be denoted by the same symbol), with components $\tw_n:\left(\Pi\tilde{\c}(V)\right)^{\otimes n}\to \Pi\tilde{\c}(V), n=1,2,\ldots$.
For any $f\in \Hom(S^{k+n-1}\Pi V,\Pi V)$ with $k\geq 1$
and $w_1,\ldots,w_{n-1}\in \Pi V$, we have
$${\tw}_n(\Pi f, w_1,\ldots, w_{n-1})
\in
 \Pi\Hom(S^k\Pi V,\Pi V) \subset \Pi\tilde{\c}(V),$$
given by the formula
\begin{equation}\label{twc}
{\tw}_n(\Pi f, w_1,\ldots, w_{n-1})(x_1,\ldots, x_k)=\Pi f(w_1,\ldots, w_{n-1}, x_1,\ldots, x_k).
\end{equation}
 In addition $\tw_1(w)=-w$ for all $w\in \Pi V$. The value of $\tw_n$ on any other  collection of homogeneous elements is zero.

\item
Consider the automorphism $\tw$ of the functor $A\to\MC(\cs(V),A)$ given by $\tw:(m,\xi)\mapsto (m^\xi,-\xi)$. This automorphism induces an $L_\infty$ automorphism of $\tilde{\cs}(V)$  which is simply the restriction of the corresponding $L_\infty$ automorphism in part (1).
\item
Consider the automorphism $\tw$ of the functor $A\to\MC(\tilde{\a}(V),A)$ given by $\tw:(m,\xi)\mapsto (m^\xi,-\xi)$. This automorphism induces an $L_\infty$ automorphism of $\tilde{\a}(V)$  (which will be denoted by the same symbol), with components $\tw_n:\left(\Pi\tilde{\a}(V)\right)^{\otimes n}\to \Pi\tilde{\a}(V), n=1,2,\ldots$.
For any $f\in \Hom(\left(\Pi V\right)^{\otimes n-1+k},\Pi V)$ with $k\geq 1$
and $w_1,\ldots,w_{n-1}\in \Pi V$, we have
$${\tw}_n(\Pi f, w_1,\ldots, w_{n-1})
\in
 \Pi\Hom(\left(\Pi V\right)^{\otimes k},\Pi V) \subset \Pi\tilde{\a}(V),$$
given by the formula
\begin{equation}\label{twa}
{\tw}_n(\Pi f,w_1,\ldots,w_{n-1})(x_1,\ldots x_{k-n+1})
=\sum_{\sigma\in S_{n-1}} \Pi f\left(\sigma(w_1\otimes \ldots \otimes w_{n-1})\mid x_1,\ldots,x_{k-n+1}\right).
\end{equation}
  In addition $\tw_1(w)=-w$ for all $w\in \Pi V$. The value of $\tw_n$ on any other  collection of homogeneous elements is zero.
\item
Consider the automorphism $\tw$ of the functor $A\to\MC(\as(V),A)$ given by $\tw:(m,\xi)\mapsto (m^\xi,-\xi)$. This automorphism induces an $L_\infty$ automorphism of $\tilde{\as}(V)$  which is simply the restriction of the corresponding $L_\infty$ automorphism in part (3).
\end{enumerate}
\end{theorem}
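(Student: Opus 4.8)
The plan is to obtain these $L_\infty$ automorphisms from the Yoneda lemma, via the representability of $\MC$. For any $L_\infty$ algebra $\g$ there is a natural bijection $\MC(\g,A)\cong\Hom_{\mathrm{cdga}}(\hat S\Pi\g^*,A)$ (the $A$-linear analogue of the standard correspondence, cf.\ \cite{CL}), where $\hat S\Pi\g^*$ carries its master-equation differential; hence natural self-transformations of the functor $A\mapsto\MC(\g,A)$ are precisely the continuous cdga endomorphisms of $\hat S\Pi\g^*$, that is, the $L_\infty$ endomorphisms of $\g$ in the sense of Section~\ref{formal}, and natural automorphisms correspond to $L_\infty$ automorphisms. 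The first step is to verify that $\tw:(m,\xi)\mapsto(m^\xi,-\xi)$ is such a natural automorphism of $A\mapsto\MC(\tilde{\c}(V),A)$: by the theorem of Section~\ref{formal} describing $\MC(\tilde{\c}(V),A)$ this functor is the functor of pairs $(m,\xi)$; the remark following \eqref{twist1} and \eqref{twist2} gives $-\xi\in\MC((V,m^\xi),A)$, so $\tw$ lands back in $\MC(\tilde{\c}(V),A)$; naturality in $A$ is immediate from \eqref{twist1}; and a short computation from Definition~\ref{closedtw}, using that the bracket of two constant derivations of $\hat S\Pi V^*$ vanishes, shows $(m^\xi)^{-\xi}=m$, so $\tw$ is an involution and in particular an automorphism. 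Applying the general principle to $\g=\tilde{\c}(V)$ produces the asserted $L_\infty$ automorphism, and parts (1)--(4) are the statement that it has an unusually simple form.

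To identify the components $\tw_n$ one evaluates $\tw$ on the universal MC element $u\in\MC(\tilde{\c}(V),\hat S\Pi\tilde{\c}(V)^*)$, the one corresponding to the identity: then $\tw(u)$ \emph{is} the induced cdga endomorphism, so reading off its Taylor coefficients recovers the $\tw_n$. Write $u=g+\xi$ with $g=(g_1,g_2,\dots)$ its $\Pi\c(V)$-part and $\xi$ its $\Pi V$-part; by \eqref{twist1}, $\tw(u)=g^\xi-\xi$ with $g^\xi_k(x_1,\dots,x_k)=\sum_{i\ge0}\tfrac{1}{i!}g_{k+i}(\xi,\dots,\xi,x_1,\dots,x_k)$, which is \emph{linear} in $g$, whereas the $\Pi V$-part $-\xi$ contains no $g$ at all. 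Since an $L_\infty$ morphism sends $\eta$ to $\sum_{n\ge1}\tfrac{1}{n!}\tw_n(\eta^{\otimes n})$, expanding this for $\eta=g+\xi$ by multilinearity and symmetry and matching it with $g^\xi-\xi$, degree by degree in $g$ and in $\xi$, forces the following: $\tw_n$ vanishes on any tuple with two or more entries in $\Pi\c(V)$ (the left-hand side is at most linear in $g$), and on any tuple lying wholly in $\Pi V$ when $n\ge2$ (since $-\xi$ is linear in $\xi$); $\tw_1$ is the identity on $\Pi\c(V)$ and $-\id$ on $\Pi V$; and on a tuple $(\Pi f,w_1,\dots,w_{n-1})$ with $\Pi f\in\Pi\c(V)$ and the $w_i\in\Pi V$ one reads off exactly \eqref{twc}, after polarizing the coinciding arguments $\xi$ into distinct $w_i$. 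This proves part (1). Part (3) is identical, starting from the $A_\infty$ twisting formula \eqref{twist2}; the only new feature is that the operations $g_k$ are not symmetric, so polarizing the coinciding copies of $\xi$ inside the interleaving (bar) sum of \eqref{twist2} produces the extra symmetrization $\sum_{\sigma\in S_{n-1}}$ appearing in \eqref{twa}.

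Parts (2) and (4) follow because $\tilde{\cs}(V)\hookrightarrow\tilde{\c}(V)$ and $\tilde{\as}(V)\hookrightarrow\tilde{\a}(V)$ are strict ($F_1$-only) inclusions of $L_\infty$ algebras and MC twisting preserves cyclicity: translation by the constant vector $\xi$ is a symplectomorphism of $\hat S\Pi V^*$ (respectively $\hat T\Pi V^*$), and constant vector fields are symplectic derivations, so $m^\xi=e^\xi m e^{-\xi}-d(\xi)$ is a symplectic derivation whenever $m$ is. Hence $\tw$ restricts to a natural automorphism of the subfunctor $A\mapsto\MC(\tilde{\cs}(V),A)\subseteq\MC(\tilde{\c}(V),A)$, and by naturality of the Yoneda correspondence with respect to the strict inclusion the $L_\infty$ automorphism induced on $\tilde{\cs}(V)$ is the restriction of the one on $\tilde{\c}(V)$; concretely, \eqref{twc} carries a $\Pi\cs(V)$-entry to $\Pi\cs(V)$ because a partial polarization of a Hamiltonian is again a Hamiltonian. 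The same argument, using \eqref{twa}, handles $\tilde{\as}(V)\subseteq\tilde{\a}(V)$.

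The hard part will be making the representability statement of the first paragraph precise: for the ``big'' algebras $\tilde{\c}(V)$, $\tilde{\a}(V)$ and the like, the complex $\hat S\Pi\g^*$ is not a formal cdga in the strict sense of this paper but a filtered inverse limit of finite-dimensional nilpotent cdgas, so the Yoneda argument must be carried out level by level over honest finite-dimensional nilpotent test algebras, in the manner of \cite{CL}. The remaining verifications are routine bookkeeping, with the signs kept out of sight by the parity-reversion conventions and by the MC element $\xi$ being even.
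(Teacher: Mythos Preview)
Your argument is essentially the same as the paper's: both hinge on the identification of natural transformations of the MC-functor with $L_\infty$ maps, and both perform the same core computation matching $\sum_n\tfrac{1}{n!}\tw_n((m,\xi)^{\otimes n})$ against the twisting formulas \eqref{twist1}--\eqref{twist2}. The only difference is the logical direction: the paper writes down the candidate $\tw'$ from \eqref{twc}/\eqref{twa} first and then checks it induces $(m,\xi)\mapsto(m^\xi,-\xi)$ on MC elements, whereas you invoke Yoneda to produce an $L_\infty$ automorphism abstractly and then read off its components from the universal element---the paper's order has the small practical advantage of sidestepping the representability subtlety you flag at the end, since one only needs that an $L_\infty$ map is \emph{determined} by its action on MC elements over all formal test cdgas, not full representability.
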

\begin{proof}
Parts (2) and (4) are immediate consequences of (1) and (3) respectively. Let us prove (1). Denote temporarily by $\tw^\prime$ the automorphism of $\hat{S}\Pi [\tilde{\c}(V)]^*$ given by formula (\ref{twc}). It is sufficient to prove that for any formal cdga $A$ the automorphism $\tw^\prime$ induces the same map on $A$-linear MC elements $(m,\xi)\in\MC(\tilde{\c}(V),A)$ as $\tw$ does. It would follow then that $\tw$ and $\tw^\prime$ coincide.  We have
\[
{\tw}^\prime(m,\xi)=\sum_{n=1}^\infty\frac{1}{n!}{\tw}^\prime_n((m,\xi)^{\otimes n})=\left(\sum_{n=1}^\infty\frac{1}{(n-1)!}{\tw}^\prime_n(m,\underbrace{\xi,\ldots,\xi}_{n-1}),-\xi\right).
\]
On the other hand \[\tw(m,\xi)=(m^\xi,-\xi)=(m^\xi_1+m^\xi_2+\ldots,-\xi)\in\Der(\hat{S}\Pi V^*)\oplus V.\] Similarly
\[{\tw}^\prime(m,\xi)=({\tw}^\prime(m,\xi)_1+{\tw}^\prime(m,\xi)_2+\ldots,-\xi)\in \Der(\hat{S}\Pi V^*)\oplus V,\]
where ${\tw}^\prime(m,\xi)_i\in\Hom(S^i\Pi V,\Pi V)\subset\Der(\hat{S}\Pi V^*)$. We have the following formula for the structure maps of twisted $L_\infty$ structures taking into account (\ref{twist1}):
\begin{align*}
{\tw}^\prime(m,\xi)_i(x_1,\ldots, x_i)&=\sum_{n=1}^\infty\frac{1}{(n-1)!}m_{i+n-1}(\underbrace{\xi,\ldots,\xi}_{n-1},x_1,\ldots, x_i)\\
&=\sum_{n=0}^\infty\frac{1}{n!}m_{i+n}(\underbrace{\xi,\ldots,\xi}_{n},x_1,\ldots, x_i)\\
&=m^\xi_i(x_1,\ldots, x_i),
\end{align*}
where $x_1,\ldots, x_i\in\Pi V$. This shows that $\tw^\prime$ and $\tw$ agree and finishes the proof of part (1).

The proof of part (3) is similar. We denote temporarily by $\tw^\prime$ an automorphism of $\hat{S}\Pi [\tilde{\a}(V)]^*$ given by formula (\ref{twa}). It is sufficient to prove that for any formal cdga $A$ the automorphisms $\tw^\prime$ and $\tw$ induce the same maps on $A$-linear MC elements $(m,\xi)\in\MC(\tilde{\a}(V),A)$. It would follow then that $\tw$ and $\tw^\prime$ coincide.  We have
\[
{\tw}^\prime(m,\xi)=\sum_{n=1}^\infty\frac{1}{n!}{\tw}^\prime_n((m,\xi)^{\otimes n})=\left(\sum_{n=1}^\infty\frac{1}{(n-1)!}{\tw}^\prime_n(m,\underbrace{\xi,\ldots,\xi}_{n-1}),-\xi\right).
\]
On the other hand $\tw(m,\xi)=(m^\xi,-\xi)=(m^\xi_1+m^\xi_2+\ldots,-\xi)$ and we have the following formula for the structure maps of twisted $L_\infty$ structures taking into account (\ref{twist2}):
\begin{align*}
{\tw}^\prime(m,\xi)_i(x_1,\ldots, x_i)&=\sum_{n=1}^\infty\sum_{\sigma\in S_{n-1}}\frac{1}{(n-1)!}m_{i+n-1}(\underbrace{\sigma(\xi\otimes\ldots\otimes\xi)}_{n-1}|x_1,\ldots, x_i)\\
&=\sum_{n=0}^\infty m_{i+n}(\underbrace{\xi,\ldots,\xi}_{n}|x_1,\ldots, x_i)\\
&=m^\xi_i(x_1,\ldots, x_i),
\end{align*}
where $x_1,\ldots, x_i\in\Pi V$. This shows that $\tw^\prime$ and $\tw$ agree and finishes the proof of part (3).

\end{proof}
\subsection{Twisting automorphisms in operads $\L$ and $\A$}
We will now describe how MC twisting in infinity algebras gives rise to automorphisms in the operads $\L$ and $\A$.
\begin{theorem}\
\begin{enumerate}
\item There exists an automorphism $\tw$ of the dg operad $\L$ given by the formulas
\begin{align*}
&\tw(x)=-x;\\
&\tw(m_n)=\sum_{i=0}^\infty \frac{1}{i!}m_{n+i}\circ(\underbrace{x\otimes\ldots\otimes x}_i)
\end{align*}
such that for any formal cdga $A$, an $A$-linear $L_\infty$ algebra $(V,m)$ and $\xi\in\MC(V,A)$, the $A$-linear $\L$-algebra given by
the composition
of $\tw:\L\to\L$ with the operadic action map $\L\to A_+\otimes\End(\Pi V)$ determines the pair $\tw(m,\xi)=(m^\xi,-\xi)$.
\item
There exists an automorphism $\tw$ of the dg modular operad $\overline{\mathscr L}_\infty$ given by the same formula as in part (1)
and such that for any formal cdga $A$, a cyclic $A$-linear $L_\infty$ algebra $(V,m)$ and $\xi\in\MC(V,A)$, the $A$-linear $\overline{\mathscr L}_\infty$-algebra given by
the composition
of $\tw:\overline{\mathscr L}_\infty\to \overline{\mathscr L}_\infty$ with the action map of modular operads  $\overline{\mathscr L}_\infty\to A_+\otimes\End(\Pi V)$ determines the pair $\tw(m,\xi)=(m^\xi,-\xi)$.
\item There exists an automorphism $\tw$ of the dg operad $\A$ given by the formula
\begin{align*}
&\tw(x)=-x;\\
&\tw(m_n)=\sum_{i=0}^\infty\sum_{\sigma\in \operatorname{Sh(i,n)}} \sigma(m_{i+n})\circ(\underbrace{x\otimes\ldots\otimes x}_i)
\end{align*}
such that for any formal cdga $A$, an $A$-linear $A_\infty$ algebra $(V,m)$ and $\xi\in\MC(V,A)$, the $A$-linear $\A$-algebra given by
the composition
of $\tw:\A\to\A$ with the operadic action map $\A\to A_+\otimes \End(\Pi V)$ determines the pair $\tw(m,\xi)=(m^\xi,-\xi)$. Here the notation $\operatorname{Sh(i,n)}$ stands for the set of $(i,n)$-shuffles.
\item
There exists an automorphism $\tw$ of the dg modular operad $\overline{\mathscr A}_\infty$ given by the same formula as in part (3)
and such that for any formal cdga $A$, a cyclic $A$-linear $A_\infty$ algebra $(V,m)$ and $\xi\in\MC(V,A)$, the $A$-linear $\overline{\mathscr A}_\infty$-algebra given by
the composition
of $\tw:\overline{\mathscr A}_\infty\to\overline{\mathscr A}_\infty$ with the action map of modular operads  $\overline{\mathscr A}_\infty\to A_+\otimes\End(\Pi V)$ determines the pair $\tw(m,\xi)=(m^\xi,-\xi)$.

\end{enumerate}
\end{theorem}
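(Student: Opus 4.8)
The plan is to prove part~(1) in full; parts~(2)--(4) then follow with only minor adjustments. As a \emph{graded} topological operad, $\L=L_\infty[[x]]$ is freely generated by the nullary operation $x$ together with the $m_n$, and each series $\sum_{i\ge 0}\frac{1}{i!}m_{n+i}\circ(x^{\otimes i})$ converges in the $(x)$-adic topology (its $i$-th term lies in $(x)^i$); hence the displayed formulas determine a unique continuous endomorphism $\tw$ of the graded operad $\L$. First I would check that $\tw$ is an involution, hence an automorphism: $\tw^2(x)=x$ is immediate, and since $x$ is even a direct expansion gives $\tw^2(m_n)=\sum_{k\ge 0}\frac{1}{k!}(\sum_{i=0}^{k}\binom{k}{i}(-1)^i)\,m_{n+k}\circ(x^{\otimes k})=m_n$, because $\sum_{i=0}^k\binom{k}{i}(-1)^i$ vanishes for $k\ge 1$. (Alternatively, once part~(1) is established, $\tw^2=\id$ can be read off from the fact that MC twisting is an involution.)

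The substantive step is to verify that $\tw$ commutes with the differential, $d\circ\tw=\tw\circ d$. Both $d\circ\tw$ and $\tw\circ d$ are continuous $\tw$-derivations of $\L$ (they satisfy the $\tw$-twisted Leibniz rule $\theta(a\circ_i b)=\theta(a)\circ_i\tw(b)\pm\tw(a)\circ_i\theta(b)$), and such a derivation is determined by its values on the operadic generators, so it is enough to compare the two sides on $x$ and on each $m_n$. On $x$ one computes directly---using that $x$ is even and carries the trivial symmetric group action---that $d(\tw(x))$ and $\tw(d(x))$ both equal $-\sum_{k\ge1}\frac{1}{k!}m_k\circ(x^{\otimes k})$, the sign encoding $\sum_{n=1}^{k}\binom{k}{n}(-1)^n=-1$. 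The identity on $m_n$ is the step I expect to be the main obstacle: expanding $d(\tw(m_n))$ by the Leibniz rule produces the cobar terms of the $m_{n+i}$ together with terms in which $d$ lands on one of the adjoined copies of $x$ (replacing it by $\sum_l\frac{1}{l!}m_l\circ(x^{\otimes l})$), whereas $\tw(d(m_n))$ expands into sums of the two-vertex expressions $\tw(m_k)\circ_1\tw(m_{n-k+1})$; one then matches the two resulting families of decorated (shuffle-)trees term by term. This bookkeeping parallels the proof that $d^2=0$ in $\L$ given above and the standard verification that MC twisting preserves the master equation (cf.~\cite{CL}), and I do not anticipate any new difficulty beyond tracking the shuffle sums.

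Granting that $\tw$ is a dg operad automorphism, the assertion concerning algebras follows at once. Given an $A$-linear $L_\infty$ algebra $(V,m)$ and $\xi\in\MC(V,A)$, Theorem~\ref{twalg} furnishes a dg operad map $f\colon\L\to A_+\otimes\End(\Pi V)$ with $f(x)=\xi$ and with $f(m_n)$ the $n$-th structure map. Then $f\circ\tw$ is again a map of dg operads, so by Theorem~\ref{twalg} it corresponds to a pair $(m',\xi')$; evaluating on generators gives $\xi'=(f\tw)(x)=-\xi$ and, for $x_1,\ldots,x_n\in\Pi V$, $m'_n(x_1,\ldots,x_n)=\sum_{i\ge0}\frac{1}{i!}m_{n+i}(\xi,\ldots,\xi,x_1,\ldots,x_n)=m^\xi_n(x_1,\ldots,x_n)$ by formula~(\ref{twist1}). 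Hence $\tw$ induces $(m,\xi)\mapsto(m^\xi,-\xi)$, as claimed.

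Part~(3) is proved the same way, now in the non-symmetric operad $\A=A_\infty[[x]]$: the formula for $\tw(m_n)$ carries the sum over $(i,n)$-shuffles because the $m_n$ are not symmetric, the chain-map check uses the $A_\infty$ cobar differential and a matching of \emph{planar} two-vertex trees, and the identification of $f\circ\tw$ invokes formula~(\ref{twist2}), whose interleaving sum is precisely the sum over shuffles in $\tw(m_n)$. For parts~(2) and~(4) the modular closures $\overline{\mathscr L}_\infty$ and $\overline{\mathscr A}_\infty$ are generated by the same operations $m_n$ and $x$; since $\tw$ is built by operadic composition from the cyclically symmetric $m_n$ and from $x$, it respects the cyclic structure and therefore, by functoriality of the modular-closure construction, extends to a dg automorphism of the modular closure given by the same formulas. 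Because modular algebras over $\overline{\mathscr L}_\infty$ (resp.~$\overline{\mathscr A}_\infty$) are the same as cyclic $\L$-algebras (resp.~cyclic $\A$-algebras), the identification $\tw(m,\xi)=(m^\xi,-\xi)$ carries over verbatim from the modular analogue of Theorem~\ref{twalg}.
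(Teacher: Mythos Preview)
Your proposal is correct, but the verification that $\tw$ commutes with the differential proceeds along a different route from the paper's. You argue directly: both $d\circ\tw$ and $\tw\circ d$ are $\tw$-twisted derivations, so it suffices to compare them on the generators $x$ and $m_n$; the case of $x$ is a short binomial computation, and the case of $m_n$ is a term-by-term matching of two-vertex tree expansions. The paper instead uses a representation-theoretic shortcut. Given any algebra $f:\A\to\End(\Pi V)$, the composite $f\circ\tw$ sends the generators to the structure maps of the MC-twisted $A_\infty$ algebra (in the restricted sense of Section~\ref{S:restricted}), and these are \emph{already known} to satisfy the $A_\infty$ relations; hence $f\circ\tw$ is automatically a dg map without any new tree bookkeeping. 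To deduce that $\tw$ itself is a dg map one needs $f$ faithful; since $\A$ is not discrete this is impossible directly, so the paper passes to the discrete quotients $\A/(x^m)$, uses their regular representations, and then takes the inverse limit. Your approach is more self-contained and explicit; the paper's approach is more conceptual, recycling the master-equation verification built into MC twisting rather than redoing it. The treatments of the cyclic and modular extensions (parts~(2) and~(4)) are essentially the same in both. Your observation that $\tw$ is an involution via $\sum_{i=0}^k\binom{k}{i}(-1)^i=0$ for $k\ge1$ is a nice addition not spelled out in the paper.
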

\begin{proof}
The main issue is proving the compatibility of the automorphism $\tw$ with differentials; we start with part (3). Suppose that a discrete dg vector space $V$ has the structure of an $\A$ algebra given by a (continuous) map of operads
$f:\A\to\End(\Pi V)$ and consider the composite map
\[
\xymatrix{\A\ar^{\tw}[r]&\A\ar^-{f}[r]&\End(\Pi V).}
\]
The images $f\circ\tw(m_n), f\circ\tw(x)$ in $\End(\Pi V)$ endow $V$ with a structure of a $A_\infty$ algebra twisted by an MC element
(in the sense of Section~\ref{S:restricted})
and it follows
that $f\circ \tw$ is a dg map although $\tw$ is not yet known to be compatible with the differential. To prove the desired compatibility it suffices to find such a $V$ on which $\A$ acts faithfully. This is of course impossible, since $\A$  itself is not discrete.

To circumvent this minor difficulty, we note that $\A= \underleftarrow{\lim}_m\, \A/(x^m)$, an inverse limit of dg operads, and that
$\tw$ descends to an automorphism of $\A/(x^m)$. Thus is suffices to prove that $\tw$, viewed as an automorphism of
$\A/(x^m)$, commutes with the differential
in $\A/(x^m)$. Since $\A/(x^m)$ is discrete, its regular representation
$V=\oplus_{n=0}^\infty(\A/(x^m))(n)$ is also discrete (and of course faithful), and then the argument suggested above applies, giving the
desired conclusion.

Let us now show that $\tw$ preserves the cyclic structure. In the $\L$ case this boils down to proving that the element $\sum_{i=0}^\infty \frac{1}{i!}m_{n+i}\circ(\underbrace{x\otimes\ldots\otimes x}_i)$ is cyclically invariant but this is clear since by the cyclicity of $L$ each individual term $m_{n+i}\circ(\underbrace{x\otimes\ldots\otimes x}_i)$ is cyclically invariant. Therefore $\tw$ is an automorphism of a cyclic operad $\L$ and thus determines an automorphism of its modular closure $\overline{\mathscr L}_\infty$.

Similarly, in the $\A$ case we are required to prove that $\sum_{\sigma\in \operatorname{Sh(i,n)}} \sigma(m_{i+n})\circ(\underbrace{x\otimes\ldots\otimes x}_i)$ is invariant with respect to the action of the $n$-cycle and this follows from the fact
that the latter determines a bijection of the set of $(i,n)$-shuffles onto a conjugate set of shuffles.

This completes the proof that $\tw$ is a dg automorphism in all four cases. The claim that the $A$-linear $\L$ algebra or $\A$ algebra structure
on $A\otimes V$ obtained by pulling back along $\tw$ is the MC-twisted one is a direct consequence of formulas (\ref{twist1}) and (\ref{twist2}). The cyclic analogue is likewise clear.
\end{proof}

\section{Twisting in general operads}\label{secw}
Until now we studied the concept of MC twisting in infinity-algebras and operads governing twisted structures. Since an operad itself is a generalization of an algebra it is natural to ask whether it is possible to twist operads themselves. It is not clear that a full-fledged generalization is possible; here we only consider a very restricted notion of twisting by MC elements corresponding to unary operations.
Our main example of such a twisting is a functor $\O\mapsto\hat{\O}$ which allows one to consider the underlying differential
on a given dg vector space as part of an operadic structure.
\subsection{Operad twisting}
Let $\O$ be a dg operad. Associated to $\O$ is the dgla $\Der(\O)$ of derivations of $\O$ consisting of collections of linear maps $\xi_n:\O(n)\to\O(n), \, n=0,1,\ldots$ such that $\xi_n(f\circ_i g)=(\xi_nf)\circ_i g+(-1)^{|f||\xi_n|}f\circ_i \xi_n(g)$ where $f\in\O(k),\, g\in\O(l),\, i=1,\ldots l$. It is clear that the commutator of derivations is again a derivation and the differential in $\O$ induces one on $\Der(\O)$; thus the latter is indeed a dgla. An MC element $\xi$ in it allows one to twist the differential $d_\O$ in $\O$ by the formula $d^\xi:=d_\O+\xi$.

Consider a special case of this construction corresponding to an \emph{inner} derivation of $\O$. Note that the dga $\O(1)$ acts on $\O$ as operad derivations according to the formula
\[
\delta_a(z)=a\circ_1 z-(-1)^{|a||z|}\sum_{i=1}^nz\circ_ia
\]
where $z\in\O(n)$ and $a\in\O(1)$. Then the correspondence $a\mapsto\delta_a$ determines a a dgla map $\O(1)\to\Der(\O)$. If $m$ is an MC element in $\O(1)$, i.e. $m$ satisfies the equation $dm+\frac{1}{2}[m,m]=0$, then we obtain the twisted dg operad $\O^m$ having the same underlying operad as $\O$ and whose differential is $d^{\delta_m}$.
\begin{example}
Let $(V,d_V)$ be a dg vector space. Then an MC element in the dga $\End(V,d_V)(1)$ is an odd endomorphism $m:V\to V$ such that $d_Vm+md_V+m^2=0$, i.e. such that $(d_V+m)^2=0$. The correspondence $m\mapsto d_V+m$ determines a bijection between the set of MC elements in $\End(V,d_V)(1)$ and the set of differentials on the graded vector space $V$. Furthermore,
\[
[\End(V,d_V)]^m=\End(V,d_V+m).
\]
\end{example}
Now let $\O\to\End(\Pi V)$ be an $\O$ algebra structure on a dg space $(V,d_V)$ and $m\in\O(1)$ be an MC element which is also viewed as an odd operator $\Pi V\to \Pi V$. Then we have an induced map
\begin{equation}\label{twop}
\O^m\to[\End(\Pi V)]^m=\End(\Pi V,d_V+m)
\end{equation} Conversely, such a map (\ref{twop}) of twisted operads can be `untwisted' by the MC element $-m\in\O^m(1)$ giving an $\O$-algebra structure on $(V,d_V)$.
We obtain the following result.
\begin{prop}\label{twchar}
Let $\O$ be a dg operad and $m$ be an MC element in $\O(1)$. Then an $\O$-algebra structure on a dg vector space $(V,d_V)$ is equivalent to an $\O^m$ algebra structure on $(V,d_V+m)$.
\end{prop}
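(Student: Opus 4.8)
The plan is to notice that both sides of the claimed equivalence are carried by one and the same map of \emph{plain} (non-dg) operads $\phi\colon\O\to\End(\Pi V)$: since $\O^m$ has the same underlying operad as $\O$, and $\End(\Pi V,d_{\Pi V}+\mu)$ the same underlying operad as $\End(\Pi V,d_{\Pi V})$ for any odd operator $\mu$, the only thing that changes is which differentials $\phi$ is required to respect. So the proof reduces to checking that the condition ``$\phi$ is a chain map for $(d_\O,\,d_{\End(\Pi V)})$'' is equivalent to the condition ``$\phi$ is a chain map for $(d_\O+\delta_m,\,d_{\End(\Pi V)}+\delta_{\phi(m)})$''. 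First I would fix an $\O$-algebra structure $\phi\colon\O\to\End(\Pi V,d_{\Pi V})$ and set $\mu:=\phi(m)$. Restricting $\phi$ to arity one gives a map of dgas $\O(1)\to\End(\Pi V,d_{\Pi V})(1)$, so it carries the MC element $m$ to the MC element $\mu$; by the Example preceding the proposition this says exactly that $d_{\Pi V}+\mu$ is a differential on $\Pi V$ (so that $(V,d_V+m)$ is meaningful) and that $\End(\Pi V,d_{\Pi V}+\mu)=[\End(\Pi V,d_{\Pi V})]^{\mu}$.

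Next I would verify the one genuine computation, namely that $\phi$ intertwines the inner twisting derivations. Since $\phi$ commutes with all the partial compositions $\circ_i$ and preserves parity, for $z\in\O(n)$ we get $\phi(\delta_m(z))=\phi(m)\circ_1\phi(z)-(-1)^{|m||z|}\sum_{i=1}^{n}\phi(z)\circ_i\phi(m)=\delta_{\mu}(\phi(z))$, i.e. $\phi\circ\delta_m=\delta_\mu\circ\phi$. Combined with $\phi\circ d_\O=d_{\End(\Pi V)}\circ\phi$, adding gives $\phi\circ(d_\O+\delta_m)=(d_{\End(\Pi V)}+\delta_\mu)\circ\phi$; that is, the very same $\phi$, now viewed as a map $\O^m\to[\End(\Pi V,d_{\Pi V})]^{\mu}=\End(\Pi V,d_{\Pi V}+\mu)$, is a map of dg operads, hence an $\O^m$-algebra structure on $(V,d_V+m)$. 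For the reverse implication, starting from a dg operad map $\psi\colon\O^m\to\End(\Pi V,d_{\Pi V}+\mu)$ with $\mu=\psi(m)$, the identity $\psi\circ(d_\O+\delta_m)=(d_{\End(\Pi V)}+\delta_\mu)\circ\psi$ together with $\psi\circ\delta_m=\delta_\mu\circ\psi$ (same operad-map computation) yields $\psi\circ d_\O=d_{\End(\Pi V)}\circ\psi$, so $\psi$ is an $\O$-algebra structure on $(V,d_V)$; this is the passage (\ref{twop}) ``untwisted''.

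Finally I would record that the two assignments are mutually inverse. Because $a\mapsto\delta_a$ is a map of dg Lie algebras $\O(1)\to\Der(\O)$, the element $-m$ is an MC element of the dga $\O^m(1)$ (whose differential is $d_\O+[m,-]$), and $\delta_m+\delta_{-m}=0$ forces $(\O^m)^{-m}=\O$ on the nose; symmetrically $[\End(\Pi V,d_{\Pi V}+\mu)]^{-\mu}=\End(\Pi V,d_{\Pi V})$. Hence applying the construction of the previous paragraph to the pair $(\O^m,-m)$ is inverse to applying it to $(\O,m)$, and since on underlying plain operad maps both constructions are literally the identity, they define the asserted bijective correspondence. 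I expect the only point needing care to be exactly this bookkeeping that makes the correspondence involutive --- that $-m$ is MC in $\O^m(1)$ and $(\O^m)^{-m}=\O$ --- which, however, is immediate from linearity of $a\mapsto\delta_a$ and the MC equation for $m$; everything else is an unwinding of the definition of a dg operad map and of the Example.
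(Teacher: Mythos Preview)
Your proposal is correct and follows exactly the approach the paper takes: the paper's ``proof'' consists of the paragraph immediately preceding the proposition (observing that an $\O$-algebra map induces the map (\ref{twop}) to the twisted endomorphism operad, and that this can be untwisted by $-m\in\O^m(1)$), followed by the \verb|\noproof| symbol. You have simply written out these two steps in full, making explicit the key identity $\phi\circ\delta_m=\delta_{\phi(m)}\circ\phi$ that justifies them and recording why the constructions are mutually inverse.
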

\noproof

\subsection{The hat construction}
\begin{defi}
Let $\O$ be a formal dg operad and consider the formal operad $\O[m]$ generated freely by $\O$ and $m\in\O[m](1)$. The differential in $\O[m]$ is fixed by requiring that $\O$ is a dg suboperad in $\O[m]$ and $d(m)=-m^2$. Note this makes $m$ an MC element of $\O[m]$.

Then define the hat-construction $\hat{\O}$ as $\hat{\O}:=(\O[m])^{m}$, the twisting of $\O[m]$ by the MC element $m$.
\end{defi}
\begin{rem}\label{cobarhat}
Let $\O=\mathsf{B}\P$ be the cobar-construction operad of a dg operad $\P$. Then it follows directly from the definition that
\begin{itemize}
\item $\O[m]\cong \mathsf{B}(\P\oplus\ground \langle e\rangle)$ where $\P\oplus\ground\langle e\rangle$ is the operad $\P$ with one extra generator  $e$ in arity one having zero compositions with all elements of $\P$ and such that $x\circ_1x=x$;
\item $\hat{\O}\cong \mathsf{B}(\P_e)$ where $\P_e$ is the operad $\P$ with an adjoined unit (i.e. the underlying space of $\P_e$ is the same as that of $\P\oplus\ground \langle e\rangle$ but this time operadic compositions make $e$ a unit). In particular:
\begin{enumerate}
\item $\hat{l}_\infty\cong L_\infty$.
\item $\hat{a}_\infty\cong A_\infty$.
\end{enumerate}
\end{itemize}
\end{rem}
\begin{prop}\label{hatchar}
Let $\O$ be a dg operad and $(V,d)$ be a dg vector space. Then the set of $\hat{\O}$ algebra structures on $(V,d)$ is in one-to-one correspondence
with the set of $\O$ algebra structures on $(V,d^\prime)$ where $d^\prime$ is any differential on $V$.
\end{prop}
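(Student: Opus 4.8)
The plan is to unwind the definition $\hat{\O}=(\O[m])^m$ at the level of algebra maps, reducing the statement to two facts: that $\O[m]$ is freely generated over $\O$ by the single unary operation $m$, and that twisting changes neither the underlying (non-dg) operad nor a given operad map out of it — only the differential with respect to which that map is a map of dg operads. This last point is exactly the content of the discussion around \eqref{twop} and of Proposition \ref{twchar}.

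First I would observe that a $\hat{\O}$-algebra structure on $(V,d)$ is a map of dg operads $g\colon\hat{\O}\to\End(\Pi V)$, with $\End(\Pi V)$ carrying the differential induced by $d$. Since $\hat{\O}$ and $\O[m]$ share the same underlying operad, $g$ is the same datum as a map of (non-dg) operads $\rho:=g|_{\O}\colon\O\to\End(\Pi V)$ together with an odd operator $\mu:=g(m)\colon\Pi V\to\Pi V$. The core of the argument is to spell out when $(\rho,\mu)$ is compatible with the differentials. Using $d_{\hat{\O}}=d_{\O[m]}+\delta_m$, the relation $d_{\O[m]}(m)=-m^2$ and the computation $\delta_m(m)=2m^2$, one gets $d_{\hat{\O}}(m)=m^2$, so the condition on the generator $m$ reads $\mu^2=[d,\mu]$, i.e. $(d-\mu)^2=0$; hence $d':=d-\mu$ is a differential on $V$. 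For a generator $z\in\O(n)$ one has $d_{\hat{\O}}(z)=d_{\O}(z)+m\circ_1 z-(-1)^{|z|}\sum_{i=1}^{n}z\circ_i m$; applying $g$ and rewriting the operadic insertions of $\mu$ in $\End(\Pi V)$ as post-composition with $\mu$, respectively pre-composition with the signed Leibniz extension of $\mu$ to $(\Pi V)^{\otimes n}$, the terms recombine (using linearity of the Leibniz extension) into the single identity $g(d_{\O}z)=[d',\rho(z)]$, where the bracket is now taken in $\End(\Pi V)$ equipped with the differential induced by $d'$. This says precisely that $\rho$ is a map of dg operads $\O\to\End(\Pi V)$ for $d'$, i.e. an $\O$-algebra structure on $(V,d')$. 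Checking compatibility on the generating set $\O\cup\{m\}$ suffices, since $g$ and the differentials respect operadic composition.

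Conversely, given any differential $d'$ on $V$ and any $\O$-algebra structure $\rho$ on $(V,d')$, putting $\mu:=d-d'$ (which satisfies $(d-\mu)^2=(d')^2=0$) and $g:=(\rho,\mu)$ reverses the computation and yields a $\hat{\O}$-algebra structure on $(V,d)$; the two assignments are manifestly mutually inverse, which is the claimed bijection. One could equally first apply Proposition \ref{twchar} — in the form that $\hat{\O}$-algebra structures on $(V,d)$ correspond to $\O[m]$-algebra structures on $(V,d-\mu)$, with $\mu$ the image of $m$ — and only afterwards use free generation of $\O[m]$ over $\O$ to split off the operator $\mu$; the underlying computation is the same. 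The only real obstacle I anticipate is bookkeeping: tracking which differential makes which map a map of dg operads, getting the sign in $d'=d-\mu$ right, and verifying that as $\mu$ runs over all odd operators with $(d-\mu)^2=0$ the differential $d'=d-\mu$ runs over all differentials on $V$, so that every $\O$-algebra structure on every differential is hit exactly once.
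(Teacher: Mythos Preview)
Your proposal is correct and follows essentially the same approach as the paper: restrict the $\hat{\O}$-algebra map to $\O$ and to the generator $m$, and observe that the restriction to $\O$ becomes a dg operad map into $\End(\Pi V)$ equipped with the differential $d'=d-\mu$, with the converse given by $f(m)=d-d'$. The paper's proof is a two-line compression of exactly this computation, leaving the verification of $d_{\hat{\O}}(m)=m^2$ and of the identity $g(d_{\O}z)=[d',\rho(z)]$ implicit; your version simply spells these out, and your remark that one could instead invoke Proposition~\ref{twchar} first is a legitimate repackaging of the same content.
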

\begin{proof}
Let $f:\hat{\O}\to\End(V,d)$ be a dg operad map giving $(V,d)$ the structure of an $\hat{\O}$ algebra. Then $g:=f|_\O$ determines a map of dg operads
$\O\to\End(V,d-f(m))$. Conversely, any dg operad map $g:\O\to\End(V,d^\prime)$ can be extended to $f:\hat{\O}\to\End(V,d)$ by setting $f(m)=d-d^\prime$.
\end{proof}
\begin{rem} We conclude that the set of $\hat{\O}$ algebra structures on $(V,d)$ does not depend on $d$ and thus, $d$ can be taken to be equal to zero.
In other words, any $\hat{\O}$ structure on $(V,d)$ determines, and is determined by, an $\hat{\O}$ algebra structure on $(V,0)$. It is, thus, sufficient to consider only $\hat{\O}$ algebra structures on graded vector spaces (i.e. with vanishing differential). Equivalently, one can consider algebra structures on $V$ over the smaller operad $\O$, but this time varying a differential on $V$ leads to genuinely different structures.  In particular, $l_\infty$ and $a_\infty$ structures on dg vector spaces are equivalent, respectively, to $L_\infty$ and $A_\infty$ structures on graded vector spaces.
\end{rem}

\begin{theorem}\label{hatacyclic} Let $\O$ be a dg operad.
\begin{enumerate}
\item
The inclusion of operads $\O \hookrightarrow \O[m]$ is a quasi-isomorphism.
\item The operad $\hat{\O}$ is acyclic.
\end{enumerate}
\end{theorem}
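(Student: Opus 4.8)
The plan is to analyse the underlying graded operad directly. As a graded operad $\O[m]$ is the coproduct $\O\sqcup\F(m)$ of $\O$ with the free operad $\F(m)$ on one arity-$1$ generator $m$; concretely a basis of $\O[m](n)$ is indexed by rooted trees $\tau$ with $n$ legs whose internal vertices are ``$\O$-vertices'' of arity $\geq 1$, no two adjacent $\O$-vertices being joined directly (they must be separated by a nonempty string of $m$'s), together with a choice of basis element of $\O(|v|)$ at each vertex $v$ and a bead $m^{k}$ on each edge --- necessarily $k\geq 1$ on an internal edge, possibly $k=0$ (no bead) on a leg or on the root edge. Writing, for fixed $\tau$,
\[
\O[m]_\tau:=\Bigl(\bigotimes_{v}\O(|v|)\Bigr)\otimes\Bigl(\bigotimes_{e\ \mathrm{int}}\ground[m]_{+}\Bigr)\otimes\Bigl(\bigotimes_{e\ \mathrm{bdry}}\ground[m]\Bigr),\qquad \ground[m]_{+}=\bigoplus_{k\geq 1}\ground m^{k},\quad \ground[m]=\ground m^{0}\oplus\ground[m]_{+},
\]
we have $\O[m]=\bigoplus_{\tau}\O[m]_\tau$. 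The differential $d_{\O[m]}$ (determined by $d_\O$ on $\O$ and $d(m)=-m^2$) acts by $d_\O$ on the vertex factors and by the elementary rule $d_{\O[m]}(m^{k})=-m^{k+1}$ for $k$ odd, $=0$ for $k$ even, on the bead factors; the inner twist $\delta_m$ occurring in $\hat\O=(\O[m])^{m}$ additionally inserts, for each vertex, one extra $m$ on an adjacent edge. Thus $d_{\O[m]}$ preserves $\tau$ \emph{and} the set of edges carrying a nonempty bead, whereas $\delta_m$ preserves $\tau$ but may enlarge that set.

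For (1): the map of operads $\epsilon\colon\O[m]\to\O$ that is the identity on $\O$ and sends $m\mapsto 0$ is a map of dg operads (it kills $d_{\O[m]}(m)=-m^{2}$) and splits the inclusion $\O\hookrightarrow\O[m]$, so it suffices to show $\ker\epsilon$ is acyclic. Refining the decomposition above by which edges actually carry a bead, $d_{\O[m]}$ respects the splitting, and $\ker\epsilon$ is the direct sum of those summands in which at least one bead is present. Each such summand is, over the ground field, a tensor product of complexes at least one of which is $(\ground[m]_{+},d_{\O[m]})$; and the latter is acyclic, being $\bigoplus_{j\geq 1}\bigl(\ground m^{2j-1}\xrightarrow{\ \sim\ }\ground m^{2j}\bigr)$. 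Hence $\ker\epsilon$ is acyclic by K\"unneth and (1) follows.

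For (2): the twisted differential $D=d_{\O[m]}+\delta_m$ of $\hat\O$ preserves each $\O[m]_\tau$, since $\delta_m$ introduces no new $\O$-vertices; so $\hat\O=\bigoplus_\tau \O[m]_\tau$ as complexes and it is enough to show each $\O[m]_\tau$ is acyclic. If $\tau$ has no vertices then $\O[m]_\tau=(\ground[m]_{+},D)$ with $D(m^{k})=m^{k+1}$ for $k$ odd and $=0$ for $k$ even, which is acyclic as above. If $\tau$ has a bottom vertex $\omega_0$, isolate the root-edge factor: one checks that $D$ factors, with the correct Koszul sign, as $\bar D^{\mathrm{root}}\otimes\mathrm{id}\ \pm\ \mathrm{id}\otimes D_{\mathrm{rest}}$ on $\ground[m]^{\mathrm{root}}\otimes(\text{the remaining factors})$, where $\bar D^{\mathrm{root}}$ collects exactly the terms of $D$ that lengthen the root bead. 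The only such terms are the intrinsic $D$ on that bead and the term $m\circ_1\omega_0$ coming from $\delta_m(\omega_0)$; a short sign computation gives $\bar D^{\mathrm{root}}(m^{k})=m^{k+1}$ for $k$ even and $=0$ for $k$ odd, the nonvanishing for even $k$ coming cleanly from the single term $m\circ_1\omega_0$, which enters with coefficient $+1$. Thus $(\ground[m]^{\mathrm{root}},\bar D^{\mathrm{root}})=\bigoplus_{j\geq 0}\bigl(\ground m^{2j}\xrightarrow{\ \sim\ }\ground m^{2j+1}\bigr)$ is acyclic, and so $\O[m]_\tau$ is acyclic by K\"unneth. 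Therefore $\hat\O$ is acyclic.

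The step requiring care is the verification, in (2), that $D$ really restricts on each $\O[m]_\tau$ to a tensor-product differential of which the acyclic complex $(\ground[m]^{\mathrm{root}},\bar D^{\mathrm{root}})$ is a factor --- i.e.\ correctly tracking the Koszul signs when $\delta_m$ moves past the root bead to reach $\omega_0$, and confirming that no further terms of $D$ alter the root-bead length; this is the only nontrivial bookkeeping in the argument. An alternative, less computational route to (2) runs as follows: $\O\mapsto\hat\O$ sends quasi-isomorphisms to quasi-isomorphisms (compare the associated-graded pages of the $m$-adic filtrations, using that over $\ground$ the operation of decorating with $m$-beads preserves quasi-isomorphisms), so one may replace $\O$ by a resolution of cobar type, for which $\hat{\O}$ is the cobar construction of a unital (co)operad by Remark~\ref{cobarhat}, hence acyclic for the same reason that $L_\infty$ and $A_\infty$ are.
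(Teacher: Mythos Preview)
Your argument is correct and follows essentially the same strategy as the paper: for part~(2) both you and the paper isolate the root-edge $m$-factor and verify that under the twisted differential it becomes the acyclic tensor factor $(\ground[m],\ m^{2j}\mapsto m^{2j+1},\ m^{2j+1}\mapsto 0)$ --- you do this tree-by-tree, while the paper packages the same computation as an identification of the two-sided ideal $\Q$ generated by $\O$ with $\ground[m]\otimes\P$, where $\P$ is the \emph{right} ideal generated by $\O$. Your alternative route at the end (reduce to the cobar case via quasi-isomorphism invariance of $\O\mapsto\hat{\O}$) is not in the paper and would need a word on convergence of the $m$-degree spectral sequence, but your main argument already coincides with the paper's.
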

\begin{proof}
The operad $\O[m]$ may be understood as the free product of $\O$ with the acyclic nonunital dga $m\ground[m]\subset \ground[m]$
 (with differential $d(m)=-m^2$).
 Since $\ground$ is of characteristic zero, it follows that the operad inclusion $\O \hookrightarrow \O[m]$ is a quasi-isomorphism.
 For part (2), first note that
$\hat{\O}$ is a direct sum of $m\ground[m]$ (with the twisted differential $d^m(m)=m^2$)
and the two-sided operadic ideal $\Q$ generated by $\O$.
Let $\P$ be the operadic right ideal generated by $\O$; then
there is a decomposition
\begin{align*}
\Q & = \P \oplus (m\ground[m]\circ\P)  \\
&\cong \P \oplus (m\ground[m]\otimes\P)  \\
   &\cong \left(\ground\oplus m\ground[m]\right)\otimes \P \\
   &\cong \ground[m]\otimes \P. \\
\end{align*}
Here, by $\ground[m]\otimes\P$ we understand a collection (not an operad) such that
$(\ground[m]\otimes\P)(n)=\ground[m]\otimes\P(n)$, and similarly for the other tensor products appearing in the above isomorphisms.
Under this identification, the differential on $\Q$ is a $(\ground[m],d^m)$-linear derivation
such that
  for any $p\in\P$ we have
$d_{\Q}(1 \otimes p)=1\otimes p'+m\otimes p$ for some $p'\in\P$.
Hence
\begin{align*}
d_{\Q}(m^r\otimes p) &= (-1)^r m^r d_{\Q}(1\otimes p) +  d^m(m^r)\otimes p \\
&=  (-1)^r m^r \left( 1\otimes p'+m\otimes p\right) + d^m(m^r)\otimes p  \\
&=  (-1)^r  \left( m^r\otimes p'+m^{r+1}\otimes p\right) +
\begin{cases}
0 & \text{if $r$ is even} \\
m^{r+1} \otimes p & \text{if $r$ is odd}
\end{cases} \\
&= (-1)^r  m^r\otimes p'
+
\begin{cases}
m^{r+1}\otimes p & \text{if $r$ is even} \\
0 & \text{if $r$ is odd}
\end{cases}
\end{align*}
We see, therefore, that $d_{\Q}= d_1\otimes \id + \id \otimes d_2$, where $d_1$ is an acyclic differential on $\ground[m]$,
and $d_2$ is a differential on $\P$ whose precise form is unimportant. It follows that $\Q$, and therefore $\hat{\O}$, is acyclic, as required.
\end{proof}
\begin{rem}\
\begin{enumerate}\item
If $\O$ is a cobar-construction operad then $\hat{\O}$ is a cobar-construction of a \emph{unital} operad which is, thus, acyclic. The argument showing acyclicity of the hat-construction in general (part (2) of Theorem \ref{hatacyclic}) turns out to be considerably more involved.
\item
If $\O$ is a dg modular operad then one constructs a dg modular operad $\hat{\O}:=\O[m]^{m}$ where $m\in\O((0,2))$ and $m$ is $S_2$-invariant. All results of this section concerning $\hat{\O}$ have obvious analogues in the modular context, with one important exception: the vacuum part $\hat{\O}((0))$ of the hat-construction of $\O$ is \emph{not} acyclic but quasi-isomorphic to the direct sum of $\O((0))$ and the complex consisting of bivalent graphs (polygons) with the usual expanding differential. The reason for this is that the twisting by $m$ in $\O[m]((0))$ amounts to doing nothing as the unary operation $m$ acts trivially on the vacuum part; thus $\hat{\O}((0))\cong \O[m]((0))$. In particular, this explains a well-known fact that the graph complex (commutative, ribbon or Lie) containing bivalent vertices is a direct sum of the usual graph complex (with vertices of valence three or higher) and the polygonal complex.
\end{enumerate}
\end{rem}

\section{MC elements in operadic algebras}

In this section we assume that $\O$ is a dg operad supplied with a dg operad map $\phi:L_\infty\to\O$. In this situation there is a notion of an MC element in an $\O$ algebra.
\begin{defi} Let $A$ be a formal cdga and $V$ be an $A$-linear $\O$ algebra.
An MC element (more precisely, a $\phi$-MC element) in $V$ with coefficients in $A$ is an MC element in $A_+\otimes V$ viewed as an $L_\infty$ algebra by pulling back along the map $\phi:L_\infty\to\O$. The set of such MC elements will be denoted by $\MC_{\phi}(V,A)$ or simply by $\MC(V,A)$ if the map $\phi:L_\infty\to\O$ is clear from the context.
\end{defi}
\begin{rem}
Recall that there is a canonical map $L_\infty\to A_\infty$; it is then straightforward to see that the above definition of an MC element in an $A_\infty$ algebra is consistent with one considered earlier in the present paper.
\end{rem}
We now define the operad $\O_{\MC}$ governing $\O$ algebras together with a choice of an MC element.
\begin{defi}
The dg operad $\O_{\MC}$ has $\O[[x]]$ as the underlying graded operad, where $x\in\O[[x]](0)$. The differential of $\O_{\MC}$ is specified
by requiring that $\O\subset\O_{\MC}$ be a dg suboperad and by the formula
\[
d_{\O_{\MC}}(x)=\sum_{n=1}^\infty\frac{1}{n!}\phi(m_n)\circ x^{\otimes n}.
\]
\end{defi}
\begin{rem}
Let $\O=L_\infty$ and $\phi$ be the identity morphism. Then $(L_\infty)_{\MC}$ is by definition the operad $\L$ introduced in Definition
\ref{defLin}. This observation also implies that $d_{\O_{\MC}}$ squares to zero. Indeed, by construction we have a unique morphism $\phi^\prime:\L\to\O_{\MC}$
extending $\phi$ and sending $x\in\L$ to $x\in\O_{\MC}$. Then $ d_{\O_{\MC}}\circ\phi^\prime
=\phi^\prime\circ d_{\L} $ and it follows
that $d_{\O_{\MC}}^2(x)=0$ from which the desired conclusion follows.
\end{rem}
We now have the following straightforward generalization of Theorem \ref{twalg}.
\begin{theorem}\label{charMC}
Let $V$ be an $A$-linear $\O$ algebra and $\xi\in \MC(V,A)$. Then $V$ has the structure of an $A$-linear $\O_{\MC}$ algebra. Conversely, an $A$-linear $\O_{\MC}$ algebra gives rise to an $A$-linear $\O$ algebra $V$ together with a choice of an element in $\MC(V,A)$.
\end{theorem}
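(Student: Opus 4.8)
The plan is to mimic directly the proof of Theorem \ref{twalg}, which is the special case $\O=L_\infty$, $\phi=\id$. The key point is that the operad $\O_{\MC}$ has underlying graded operad $\O[[x]]$ with $x$ a nullary generator, so a continuous operad map $\O_{\MC}\to A_+\otimes\End(\Pi V)$ is the same datum as a continuous operad map $g\colon\O\to A_+\otimes\End(\Pi V)$ together with a prescribed image $\xi:=g'(x)\in A_+\otimes\Pi V$ of the nullary generator, subject only to the compatibility of $g'$ with differentials. Since $\O$ is already a dg suboperad and $g$ is assumed to be a dg map, the only remaining constraint is that $g'$ respect the differential on the generator $x$, i.e.\ $d_{A_+\otimes\End(\Pi V)}(\xi)=g'(d_{\O_{\MC}}x)$.

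First I would spell out the left and right sides of this equation. The left side is $(d_A\otimes\id+\id\otimes d_{\Pi V})(\xi)$, where $d_{\Pi V}$ is the differential coming from the internal differential on $V$. The right side, using the defining formula $d_{\O_{\MC}}(x)=\sum_{n\ge1}\frac{1}{n!}\phi(m_n)\circ x^{\otimes n}$ and applying the operad map $g'$ (which restricts to $g$ on $\O$ and sends $x\mapsto\xi$), becomes $\sum_{n\ge1}\frac{1}{n!}\,g(\phi(m_n))(\xi^{\otimes n})$. Now $g\circ\phi\colon L_\infty\to A_+\otimes\End(\Pi V)$ is by definition the $A$-linear $L_\infty$ structure on $V$ obtained by pulling back the $\O$ algebra structure along $\phi$, so $g(\phi(m_n))=m_n$ is exactly the $n$-th structure map of that $L_\infty$ algebra. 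Hence the equation $d(\xi)=g'(d_{\O_{\MC}}x)$ reads
\[
(d_A\otimes\id+\id\otimes d_{\Pi V})(\xi)+\sum_{n=1}^\infty\frac{1}{n!}m_n(\xi^{\otimes n})=0,
\]
which is precisely the Maurer--Cartan condition of Definition \ref{defMC}(1) for $\xi$ in the $L_\infty$ algebra $A_+\otimes V$ pulled back along $\phi$, i.e.\ $\xi\in\MC_\phi(V,A)$. This establishes both directions simultaneously: given an $A$-linear $\O$ algebra structure $g$ and $\xi\in\MC(V,A)$, defining $g'$ on $\O$ by $g$ and on $x$ by $\xi$ yields a dg operad map $\O_{\MC}\to A_+\otimes\End(\Pi V)$; conversely, any such map restricts to $g$ on $\O$ and its value on $x$ is forced to be a Maurer--Cartan element.

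I do not expect a serious obstacle here; the statement is essentially a tautology once the defining differential of $\O_{\MC}$ is unwound, and the well-definedness of that differential (i.e.\ $d_{\O_{\MC}}^2=0$) has already been recorded in the preceding remark via the morphism $\phi'\colon\L\to\O_{\MC}$. The one point that deserves a line of care is continuity: the map $g'$ must be continuous for the $(x)$-adic topology on $\O_{\MC}=\O[[x]]$, but this is automatic since $g'$ sends $x$ into the augmentation ideal $A_+\otimes\Pi V$ and $A_+$ is pronilpotent, so powers of $x$ go to zero in the limit; equivalently one argues level by level on $\O[[x]]/(x^m)$ exactly as in the proof of Theorem \ref{twalg}. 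I would close by remarking, as in the earlier results, that this correspondence is natural in $A$ and that the cyclic and modular analogues hold verbatim when $\O$ carries a compatible cyclic or modular structure.
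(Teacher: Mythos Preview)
Your proposal is correct and follows exactly the approach the paper intends: the paper gives no proof at all for this theorem, marking it with \noproof and calling it a ``straightforward generalization of Theorem \ref{twalg}''; your write-up simply spells out that generalization, unwinding the differential on $x$ in $\O_{\MC}$ and observing that compatibility on the nullary generator is precisely the $\phi$-MC equation. The added remarks on continuity and on cyclic/modular analogues are appropriate and in the spirit of the surrounding text.
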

\noproof

We now consider the special case of the above construction in which the given operad map $\phi:L_\infty\to\O$ factors through
$l_\infty$; abusing notation, the ensuing map $l_\infty\to\O$ will also be denoted by $\phi$.
We will see that in this situation the functor $\O_{\MC}$ has better homotopical properties. This special case is
also the basis of our interpretation of Willwacher's construction $\Tw\O$ given in the next section.
\begin{prop}\label{linftyMChat} \
Let $\phi:l_\infty\to \O$ and $\psi:l_\infty\to \P$ be maps of dg operads, and let $\rho:\O\to\P$ be a map of dg operads
such that $\rho\circ\phi=\psi$. If $\psi$ is a quasi-isomorphism, then the induced map
$\psi_{\MC}:\O_{\MC}\to\P_{\MC}$ is a quasi-isomorphism.
\end{prop}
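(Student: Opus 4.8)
\emph{Plan.} I would compute the homologies of $\O_{\MC}$ and $\P_{\MC}$ by filtering each operad by the number of copies of the nullary class $x$, and compare the two resulting spectral sequences along the map $\psi_{\MC}$ induced by $\rho$. As a graded operad one has $\O_{\MC}(n)\cong\prod_{p\ge 0}\bigl(\O(n+p)\bigr)_{S_p}$, the factor indexed by $p$ recording the operations assembled from an element of $\O(n+p)$ and $p$ copies of $x$, with $S_p$ permuting those $p$ inputs; this is exactly the inverse limit $\underleftarrow{\lim}\,\O[x]/x^{N}$, so the $x$-adic filtration is complete and exhaustive. Since $\phi$ factors through $l_\infty$ we have $\phi(m_1)=0$, hence the series defining $d(x)$ begins in $x$-degree two and the twisting part of the differential \emph{strictly} raises the number of $x$'s. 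Thus the associated graded differential $d_0$ is just the internal differential of $\O$, and $\rho$, preserving the filtration, induces a map of spectral sequences. Note that this $\mathbb Z_{\ge 0}$-valued filtration is available even though the theory is only $\Z/2$-graded, where one cannot filter by homological degree.

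Because $\ground$ has characteristic zero, the functor of $S_p$-coinvariants is exact and commutes with homology, so the first page of the source spectral sequence is
\[
E_1^{p}(n)=\bigl(H_*(\O)(n+p)\bigr)_{S_p},
\]
and likewise $\bigl(H_*(\P)(n+p)\bigr)_{S_p}$ for $\P_{\MC}$; the comparison map on $E_1$ is the $S_p$-coinvariants of the map induced by $\rho$ on homology, and the higher differentials are built from the images $\phi(m_k)$ of the $l_\infty$-operations. This is precisely the place where the hypothesis enters: the factorization $H_*(\psi)=H_*(\rho)\circ H_*(\phi)$ together with the assumption that $\psi$ is a quasi-isomorphism forces the part of the homology through which the twisting differential is assembled to be an isomorphism, identifying $H_*(\P)$ with $H_*(l_\infty)\cong\Lie$ as operads under $\Lie$.

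The heart of the matter, and the step I expect to be the main obstacle, is to upgrade this to an honest isomorphism of $E_1$-pages — equivalently, to show that the homology classes of the target not reached through $l_\infty$ do not survive into the abutment. It is here that one must genuinely use that $\psi$, and not some arbitrary operad map into $\P$, is a quasi-isomorphism: only this leaves no extraneous classes to contribute. Granting the $E_1$-isomorphism, the map is an isomorphism on every later page, and since both filtrations are complete and exhaustive (no $\lim^{1}$-obstruction arises, as everything is assembled from the discrete quotients $\O[x]/x^{N}$) the standard comparison theorem propagates this to an isomorphism of abutments, i.e. $\psi_{\MC}$ is a quasi-isomorphism. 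As an independent check I would use the natural identification $\widehat{\O_{\MC}}\cong(\hat\O)_{\MC}$: under it Theorem~\ref{hatacyclic} makes both hat-constructions acyclic, so the comparison of $\O_{\MC}$ with $\P_{\MC}$ can alternatively be pried out of the acyclic hats together with the quasi-isomorphism $\O_{\MC}\hookrightarrow\O_{\MC}[m]$ of Theorem~\ref{hatacyclic}(1).
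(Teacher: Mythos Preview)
Your filtration by the number of copies of $x$ is exactly the paper's approach, and your observation that $\phi(m_1)=0$ (since $\phi$ factors through $l_\infty$) so that the associated graded differential is just $d_\O$ is the key point. Where you go astray is in taking the hypothesis literally: the statement as printed contains a typo, and ``$\psi$ is a quasi-isomorphism'' should read ``$\rho$ is a quasi-isomorphism''. Indeed, the induced map $\O_{\MC}\to\P_{\MC}$ is built from $\rho:\O\to\P$, and the paper's own proof writes ``$\psi[[x]]:\O[[x]]\to\P[[x]]$'', which only makes sense for the map $\O\to\P$. With the intended hypothesis, your $E_1$-page map in arity $n$ and $x$-degree $p$ is simply $\bigl(H_*(\rho)(n+p)\bigr)_{S_p}$, an isomorphism because coinvariants are exact in characteristic zero; completeness of the filtration then finishes the argument in one line, exactly as the paper does.

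The ``main obstacle'' you identify is a phantom created by the typo. As literally stated the proposition is false: take $\P=l_\infty$, $\psi=\id$, and let $\O$ be the coproduct of $l_\infty$ with a free operad on an extra generator, with $\phi$ the inclusion and $\rho$ the retraction killing the extra generator; then $\psi$ is a quasi-isomorphism but $\rho$ is not, and the extra generator survives to the homology of $\O_{\MC}$ with nothing to match it in $\P_{\MC}$. Your proposed ``independent check'' via the hat-construction does not rescue this either: knowing that $\widehat{\O_{\MC}}$ and $\widehat{\P_{\MC}}$ are both acyclic gives no information about the map between $\O_{\MC}$ and $\P_{\MC}$ themselves.
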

\begin{proof}
The dg operad $\O_{\MC}$ is complete with respect to the $x$-adic filtration, and the associated graded dg operad is
isomorphic to
$\O[[x]]$, with differential equal to $d_{\O}$ on $\O$ and vanishing on $x$. The map $\psi_{\MC}:\O_{\MC}\to\P_{\MC}$ induces
the map $\psi[[x]]: \O[[x]]\to\P[[x]]$ on associated graded dg operads. Under our assumption that the characteristic
of $\ground$ is zero, the fact that $\psi$ is a quasi-isomorphism implies that $\psi[[x]]$ is a quasi-isomorphism. It follows that $\psi_{\MC}$ is also a quasi-isomorphism.
\end{proof}
\begin{rem}
Proposition \ref{linftyMChat} could be be phrased by saying that the functor $\O\mapsto\O_{\MC}$ is quasi-isomorphism invariant.
Note however, that this holds only for operads $\O$ for which the structure map $L_\infty\to\O$ factors through $l_\infty$. Consider,
by contrast, the operad $L_\infty$ itself. We shall see that $(L_\infty)_{\MC}$ is acyclic. On the other hand $L_\infty$ itself is
acyclic, i.e. it is quasi-isomorphic to the zero operad $0$ while $0_{\MC}$ is clearly not acyclic -- it is one-dimensional in arity zero.
\end{rem}
Let us consider the case $\O=l_\infty$. Note that $l_\infty$ is quasi-isomorphic to $\Lie$, the (non-unital) operad governing Lie algebras. It follows that $(l_\infty)_{\MC}$ is quasi-isomorphic to $\Lie_{\MC}$. Similarly $(a_\infty)_{\MC}$ is quasi-isomorphic to $\Ass_{\MC}$ where $\Ass$ is the operad of associative algebras. However, more is true.
\begin{theorem}\label{MCquasi}\
\begin{enumerate}
\item The natural split inclusions $a_\infty\hookrightarrow(a_\infty)_{\MC}$  and $l_\infty\hookrightarrow (l_\infty)_{\MC}$ are quasi-isomorphisms of dg operads.
\item The natural split inclusions $\Ass\hookrightarrow\Ass_{\MC}$  and $\Lie\hookrightarrow\Lie_{\MC}$ are quasi-isomorphisms of dg operads.
\end{enumerate}

\end{theorem}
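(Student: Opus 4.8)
The plan is to prove part (2) first and to deduce part (1) from it by a formal argument. The canonical maps $a_\infty\to\Ass$ and $l_\infty\to\Lie$ are quasi-isomorphisms of operads compatible with the structure maps from $l_\infty$, so (as noted immediately before the statement, via Proposition~\ref{linftyMChat} and the remark following it) the induced maps $(a_\infty)_{\MC}\to\Ass_{\MC}$ and $(l_\infty)_{\MC}\to\Lie_{\MC}$ are quasi-isomorphisms as well. These fit into commuting squares with the split inclusions; in each square three of the four edges are quasi-isomorphisms, hence so is the fourth. So it suffices to show that $\Ass\hookrightarrow\Ass_{\MC}$ and $\Lie\hookrightarrow\Lie_{\MC}$ are quasi-isomorphisms.

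Next I would unwind the two operads. By definition $\Ass_{\MC}$ has underlying graded operad $\Ass[[x]]$ (freely adjoin an even arity-zero operation $x$ and complete), the differential vanishes on $\Ass$, and $d(x)=\sum_{n\ge 1}\tfrac1{n!}\phi(m_n)\circ x^{\otimes n}$, where $\phi\colon l_\infty\to\Ass$ is the structure map. Here $\phi(m_1)=0$ (as $\phi$ factors through $l_\infty$) and $\phi(m_n)=0$ for $n\ge 3$ (for degree reasons, compatibly with the graded associativity of $\Ass$), so $d(x)=\tfrac12\phi(m_2)\circ(x\otimes x)=:x^2$ is a single quadratic term, the ``square'' of $x$ for the product $\mu$ of $\Ass$. (If one prefers not to invoke $\phi(m_{\ge 3})=0$, one passes first to the $x$-adic associated graded operad, whose induced differential is exactly this; the resulting spectral sequence will collapse at the next page and nothing changes.) Thus an arity-$n$ operation of $\Ass_{\MC}$ is a completed linear combination of ``words'' — monomials built by operadic composition out of $\mu$, the $n$ inputs (each used once) and finitely many copies of $x$ — and $d$ is the derivation sending each $x$ to $x^2$. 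The case $\Lie_{\MC}=\Lie[[x]]$, with $d(x)=\tfrac12[x,x]$, is the same, words being replaced by iterated brackets.

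The heart of the argument is then the homology computation, which I would do by induction on $n$. Since $d(x)=x^2$ is quadratic, the derivation $d$ does not change the leftmost letter of a word, so $\Ass_{\MC}(n)$ splits as a direct sum of subcomplexes indexed by that leftmost letter. The subcomplex of words whose leftmost letter is an input $i$ is isomorphic (up to an overall sign on $d$) to $\Ass_{\MC}(n-1)$ on the remaining $n-1$ inputs, by stripping off $i$. The subcomplex of words whose leftmost letter is $x$ is acyclic: grading such a word by the number $a\ge 1$ of $x$'s preceding the first input, a short sign computation using graded associativity shows that $d$ is a sum of two anticommuting differentials, one of which changes only $a$ and is the acyclic differential on $\bigoplus_{a\ge 1}\ground\,x^a$ given by $d(x^{2k-1})=\pm x^{2k}$, $d(x^{2k})=0$; so this subcomplex, being the total complex of a bicomplex acyclic in one direction, is acyclic. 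Hence $H(\Ass_{\MC}(n))\cong\bigoplus_{i=1}^{n}H(\Ass_{\MC}(n-1))$, and with base case $n=0$ (where $\Ass_{\MC}(0)=\prod_{p\ge 1}\ground\,x^p$ carries the acyclic differential just described, and $\Ass(0)=0$) induction identifies $H(\Ass_{\MC}(n))$ with the span of the $x$-free words, i.e.\ with $\Ass(n)$, the identification being induced by the inclusion. For $\Lie$ one runs the same induction with a basis of $\Lie(n)$ by right-normed iterated brackets ($d(x)=\tfrac12[x,x]$ again re-creates an $x$ in the outermost slot); alternatively one reduces to the associative case via the Poincar\'e--Birkhoff--Witt theorem, since $\Ass_{\MC}(n)$ is, arity by arity, the universal enveloping algebra of the differential graded Lie algebra $\Lie_{\MC}(n)$.

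The only genuinely non-formal step is this third one: the acyclicity of the ``leftmost letter $=x$'' subcomplex and the compatibility of the peeling-off with $d$. Its success hinges entirely on $d(x)$ being a square, so that $d$ reproduces an $x$ at the very left end of a word and one can strip off one input (or one leading $x$) at a time. The main care needed, apart from this, will be in getting the signs in these reductions right and in formulating the PBW comparison cleanly in the Lie case; everything else is two-out-of-three applied to the already established quasi-isomorphisms $(a_\infty)_{\MC}\simeq\Ass_{\MC}$ and $(l_\infty)_{\MC}\simeq\Lie_{\MC}$.
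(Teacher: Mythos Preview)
Your proposal is correct and hews closely to the paper's own argument: the reduction of (1) to (2) via Proposition~\ref{linftyMChat} is identical, and for (2) both you and the paper work in the word model on $\Ass[[x]]$ (resp.\ the right-normed bracket model on $\Lie[[x]]$) with $d(x)=x\cdot x$ and exploit the fact that the leading block of $x$'s carries an acyclic differential.

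The one organizational difference is worth recording. The paper does not induct on the arity $n$; instead it writes down, for each $n$ at once, an explicit contracting homotopy $s$ on the complement $\prod_{k\ge 1}\Ass(k+n)_{S_k}$ (resp.\ $\prod_{k\ge 1}\Lie(k+n)_{S_k}$): locate the first nonempty cluster of $x$'s in the monomial and delete one $x$ from it if that cluster has even length, else send the monomial to zero. Your inductive split by leftmost letter is the same mechanism repackaged: your ``leftmost $=x$'' subcomplex is precisely (tensor of the acyclic leading-$x$ complex with the rest), and your ``leftmost $=a_i$'' pieces, after peeling, feed the induction. The paper's direct homotopy is a little more economical (no induction, no spectral sequence), while your formulation makes the tensor-product structure of the ``leftmost $=x$'' piece transparent. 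For the $\Lie$ case the paper works directly with a Lyndon/right-normed basis rather than invoking PBW; your PBW alternative is a legitimate shortcut but is not the route taken there.
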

\begin{proof}
First note that part (1) is a direct consequence of part (2) by virtue of Proposition~\ref{linftyMChat}.
Let us now prove part (2) in the $\Lie$ case. The dg vector vector space $\Lie_{\MC}(0)$ is spanned
by the elements $x$ and $m_2(x,x)$ with the differential acting as $x\mapsto m_2(x,x)$; it is, thus, acyclic.

Now consider the case $n>0$. Recall that $\Lie(1)=0$ while for $n\geq 2$ the space $\Lie(n)$ can be identified with the space spanned inside the free algebra on $n$ generators $a_1,\ldots,a_n$ by the Lie monomials in these generators containing each of them exactly once.  Using the Lyndon basis inside a free Lie algebra (see e.g. \cite{Reu} concerning this notion) we see that $\Lie(n)$ has a basis consisting of operations of the form $[a_{\sigma(1)}[a_{\sigma(2)}\ldots[a_{\sigma(n-1)},a_n]\ldots]]$. Here $\sigma\in S_{n-1}$. To alleviate the notation we will write $a\cdot b$ for $[a,b]$ and omit the brackets in monomials. Our convention will be that $a\cdot b\cdot\ldots \cdot c\cdot d:=a\cdot(b\cdot(\ldots (c\cdot d)\ldots ))$.

Thus, we write our basis in $\Lie(n), n\geq 2$ as $\{a_{\sigma(1)}\cdot \ldots \cdot a_{\sigma(n-1)}\cdot a_n\}_{\sigma\in S_{n-1}}$; moreover the subgroup $S_{n-1}\subset S_n$ acts on such expressions by permuting the first $n-1$ symbols. This does not specify the full $S_n$-action on $\Lie(n)$ but is sufficient for our purposes. We have
\[
\Lie[[x]](n)=\Lie(n)\oplus\prod_{k=1}^\infty\Lie(k+n)_{S_k}.
\]
Recall that a basis in $\Lie(n+k)$ consists of monomials in $a_1,\ldots, a_{n+k}$ where $a_{n+k}$ is placed last and the remaining $a_i$s can be arbitrarily permuted. This gives rise to a basis in $\Lie(k+n)_{S_k}$ for $k\geq 1$. Namely, it consists of monomials in $a_i$s as above, with each of the $k$ symbols $a_1, \ldots, a_k$
replaced by the symbol $x$. Redenoting the symbol $a_{n+i}$ as $a_i$ for $1\leq i \leq k$  we can write a typical element in the constructed basis as $x^{\cdot k_1}\cdot a_{\sigma(1)}\cdot x^{\cdot k_2}\cdot a_{\sigma(2)}\cdot\ldots\cdot x^{\cdot k_l}\cdot a_{\sigma(n-1)}\cdot a_n$ where $k_i\geq 0, \sum_{i=1}^lk_i=k$ and $\sigma\in S_{n-1}$.

In other words all our monomials are obtained from $x^{\cdot k}\cdot a_1\cdot\ldots\cdot a_{n-1}\cdot a_n$ by permuting the symbols $x$ and $a_i,i=1,\ldots, n-1$ in all possible ways. This specifies a (topological) basis in $\Lie[[x]](n)$.

Further, the differential in $\Lie_{\MC}(n)\cong \Lie[[x]](n)$ is given formally by $d(x)=x\cdot x$ and $d(a_i)=0,i=1,\ldots,n$ and the Leibniz rule with respect to the product $\cdot$. The differential vanishes on the direct summand $\Lie(n)\subset\Lie_{\MC}(n)$ as it should. It suffices, therefore, to prove that the dg space $\prod_{k=1}^\infty\Lie(k+n)_{S_k}$ is acyclic. We define a contracting homotopy $s$ on the basis consisting of monomials in $x$ and $a_i,i=1,\ldots,n$ described above as follows. Given a monomial $M=x^{\cdot k_1}\cdot a_{\sigma(1)}\cdot x^{\cdot k_2}\cdot a_{\sigma(2)}\cdot\ldots\cdot x^{\cdot k_l}\cdot a_{\sigma(n-1)}\cdot a_n$ choose $s$ such that $k_s$ is the first positive integer among $k_1,\ldots, k_l$ and set
\[
s(M)=x^{\cdot k_1}\cdot a_{\sigma(1)}\cdot x^{\cdot k_2}\cdot a_{\sigma(2)}\cdot\ldots\cdot x^{\cdot k_s-1}\cdot\ldots\cdot x^{\cdot k_l}\cdot a_{\sigma(n-1)}\cdot a_n
\]
if $k_s$ is even and $s(M)=0$ if $k_s$ is odd.
In other words the cluster $\underbrace{x\cdot\ldots\cdot x}_{k_s}$
in the monomial $M$ gets replaced by the cluster $\underbrace{x\cdot\ldots\cdot x}_{k_s-1}$, if $k_s$ is even. This concludes the proof in the $\Lie$ case.

The proof in the $\Ass$ case is similar but simpler. The case $n=0$ does not have to be considered separately. The space $\Ass(0)$ is zero while for $n>1$  the space $\Ass(n)$ is spanned by tensor monomials in $n$ generators $a_1,\ldots,a_n$ which contain every such generator exactly once; they are therefore of the form $a_{\sigma(1)}\cdot\ldots\cdot a_{\sigma(n)}$ where $\sigma\in S_n$. Further, we have an isomorphism $\Ass[[x]](n)\cong \prod_{k=0}^\infty\Ass(k+n)_{S_k}$ and the space $\Ass(k+n)_{S_k}$ has a basis of monomials of length $n+k$ containing the symbols $a_1,\ldots, a_n$ and $k$ copies of $x$. The differential in $\Ass_{\MC}$ is specified on such monomials by the same formula as in the $\Lie$ case and the same contracting homotopy can be used to show that the direct complement of $\Ass$ inside $\Ass_{\MC}$ is acyclic.

\end{proof}
\begin{rem}
The statement of Theorem \ref{MCquasi} can be restated as the existence of commutative diagrams of operads where all arrows are quasi-isomorphisms.
\[
\xymatrix
{
a_\infty\ar[r]\ar[d]&(a_{\infty})_{\MC}\ar[d]\\
\Ass\ar[r]&\Ass_{\MC}
}\hspace{2cm}
\xymatrix
{
l_\infty\ar[r]\ar[d]&(l_{\infty})_{\MC}\ar[d]\\
\Lie\ar[r]&\Lie_{\MC}
}
\]
\end{rem}
We will now see that the dg operads $(A_\infty)_{\MC}$ and $(L_\infty)_{\MC}$ are acyclic.
\begin{lem}
Let $\phi:l_\infty\to \O$ be a map of dg operads.
Then we have an isomorphism of dg operads $\widehat{\O_{\MC}}\cong \hat{\O}_{\MC}$, where the MC construction on the
right-hand side is taken with respect to the induced map $\hat{\phi}: L_\infty=\hat{l}_\infty \to \hat{\O}$.

\end{lem}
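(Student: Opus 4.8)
The plan is to produce a tautological isomorphism of the underlying graded operads and then verify that it is a chain map by comparing the differentials on the three types of generators ($\O$, the unary $m$, and the nullary $x$). Unwinding the definitions, $\widehat{\O_{\MC}}$ is obtained from $\O$ by first freely adjoining a nullary operation $x$ (and completing at the $x$-adic filtration) and then freely adjoining a unary operation $m$, whereas $\hat{\O}_{\MC}$ is obtained from $\O$ by first adjoining $m$ and then adjoining $x$ (and completing). Since free adjunctions of operations in distinct arities commute and the resulting topological operads coincide, there is a canonical isomorphism $\Phi$ of topological graded operads fixing $\O$, $x$ and $m$; it remains to check compatibility with the differentials.

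Using the description of the hat construction as a twisting by $m$ and the formula $\delta_m(z)=m\circ_1 z-(-1)^{|z|}\sum_i z\circ_i m$ for the inner derivation, I would compute both differentials on generators. On $\O$ both equal $d_\O+\delta_m$: on the left because $\O\subset\O_{\MC}$ carries $d_\O$ and is then twisted by $m$, on the right because $\O\subset\hat{\O}$ already carries $d_\O+\delta_m$ and this is inherited by $\hat{\O}_{\MC}$. On $m$, the sign computation $\delta_m(m)=2\,m\circ_1 m$ gives $d(m)=-m\circ_1 m+\delta_m(m)=m\circ_1 m$ on the left, matching $d_{\hat{\O}}(m)=m\circ_1 m$ on the right. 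On $x$, the left-hand side gives $d(x)=d_{\O_{\MC}}(x)+\delta_m(x)=\sum_{n\geq 1}\frac{1}{n!}\phi(m_n)\circ x^{\otimes n}+m\circ_1 x$, and since $\phi$ factors through $l_\infty$ the $n=1$ term $\phi(m_1)\circ_1 x$ vanishes.

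The one point that requires genuine care, and which I expect to be the main (if mild) obstacle, is identifying the induced map $\hat{\phi}\colon L_\infty=\hat{l}_\infty\to\hat{\O}$. Under the isomorphism $\hat{l}_\infty\cong L_\infty$ of Remark~\ref{cobarhat}, the freely adjoined unary generator of $\hat{l}_\infty$ corresponds to the $L_\infty$-operation $m_1$, while the operations $m_n$ for $n\geq 2$ correspond to the $l_\infty$-generators of the same name; since $\hat{\phi}$ fixes the adjoined $m$ and restricts to $\phi$ on $l_\infty$, we get $\hat{\phi}(m_1)=m$ (the adjoined unary operation of $\hat{\O}$) and $\hat{\phi}(m_n)=\phi(m_n)$ for $n\geq 2$. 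Consequently $d_{\hat{\O}_{\MC}}(x)=\sum_{n\geq 1}\frac{1}{n!}\hat{\phi}(m_n)\circ x^{\otimes n}=m\circ_1 x+\sum_{n\geq 2}\frac{1}{n!}\phi(m_n)\circ x^{\otimes n}$, which is exactly the expression found above for $\widehat{\O_{\MC}}$. Having matched the differentials on all generators, $\Phi$ is the desired dg isomorphism. Beyond this identification of $\hat{\phi}$, the argument is routine, if sign-sensitive, bookkeeping.
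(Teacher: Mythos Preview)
Your proposal is correct and follows the same approach as the paper's proof: identify both sides with $\O[m][[x]]$ as graded operads and check that the differentials agree on $\O$, $m$, and $x$. The paper dispatches the latter with the phrase ``a straightforward calculation'', whereas you spell it out explicitly and correctly isolate the one nontrivial point, namely the identification of $\hat{\phi}$ under the isomorphism $\hat{l}_\infty\cong L_\infty$.
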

\begin{proof}
Both $\widehat{\O_{\MC}}$ and $\hat{\O}_{\MC}$ are isomorphic, as graded operads, to $\O[m][[x]]$, the dg operad obtained
from $\O$ by adjoining a $0$-ary operator $x$ and a $1$-ary operator $m$ and completing at the ideal generated by $x$. A straightforward
calculation shows that their differentials agree on $\O$, $x$ and $m$, and hence they are isomorphic as dg operads.
\end{proof}
\begin{cor}
The operads $\L$ and $\A$ are acyclic.
\end{cor}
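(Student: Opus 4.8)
The plan is to derive the corollary immediately from the preceding Lemma together with the acyclicity of the hat construction, Theorem~\ref{hatacyclic}(2); the quasi-isomorphism statements of Theorem~\ref{MCquasi} are not needed for this (they are used elsewhere in the paper).

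First I would record the two identifications $\L = (L_\infty)_{\MC}$ and $\A = (A_\infty)_{\MC}$. The first, with $\phi = \id$, was already noted in the remark following the definition of $\O_{\MC}$. For the second, taken with respect to the canonical map $L_\infty \to A_\infty$, one checks that the formula $d(x) = \sum_n m_n\circ x^{\otimes n}$ of Definition~\ref{defLin} agrees with $d(x) = \sum_n \tfrac{1}{n!}\phi(m_n)\circ x^{\otimes n}$: the image of $m_n$ in $A_\infty$ is the symmetrized operation $\sum_{\sigma\in S_n}\sigma m_n$, and since all $n$ inputs are the single arity-zero operator $x$, the resulting factor $n!$ cancels the $\tfrac{1}{n!}$. (Passing between the non-symmetric operad $\A$ and its symmetrization does not affect acyclicity, since it only tensors each arity with the flat algebra $\ground[S_n]$, which in characteristic zero is moreover semisimple.)

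Next I would apply the Lemma above with $\O = l_\infty$ and with $\O = a_\infty$, using Remark~\ref{cobarhat} — which gives $\widehat{l_\infty}\cong L_\infty$ and $\widehat{a_\infty}\cong A_\infty$, with the induced maps $\hat\phi$ being the canonical ones — to obtain
\[
\L = (L_\infty)_{\MC} = (\widehat{l_\infty})_{\MC} \cong \widehat{(l_\infty)_{\MC}}, \qquad \A = (A_\infty)_{\MC} = (\widehat{a_\infty})_{\MC} \cong \widehat{(a_\infty)_{\MC}}.
\]
Finally, since Theorem~\ref{hatacyclic}(2) asserts that $\hat{\P}$ is acyclic for \emph{any} dg operad $\P$, applying it to $\P = (l_\infty)_{\MC}$ and to $\P = (a_\infty)_{\MC}$ shows at once that $\L$ and $\A$ are acyclic.

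I do not anticipate a real obstacle: the only point requiring care is the bookkeeping behind the two isomorphisms $\widehat{(l_\infty)_{\MC}}\cong\L$ and $\widehat{(a_\infty)_{\MC}}\cong\A$. One must check that the map $L_\infty = \widehat{l_\infty}\to\widehat{a_\infty} = A_\infty$ induced from $l_\infty\to a_\infty$ is indeed the canonical inclusion (so that the right-hand $\MC$-constructions genuinely reproduce $\L$ and $\A$ rather than some other twisting), and that the normalization constants in the differentials match up. Both facts follow by chasing definitions through the isomorphisms of underlying graded operads already produced in the proof of the Lemma, so no new idea is involved.
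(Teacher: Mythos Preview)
Your proposal is correct and follows essentially the same route as the paper's proof: identify $\L$ and $\A$ with $(L_\infty)_{\MC}$ and $(A_\infty)_{\MC}$, apply the preceding Lemma together with Remark~\ref{cobarhat} to rewrite these as $\widehat{(l_\infty)_{\MC}}$ and $\widehat{(a_\infty)_{\MC}}$, and then invoke Theorem~\ref{hatacyclic}(2). The paper's own argument is a one-line version of exactly this; your additional remarks on the normalization constants for $\A$ and on the symmetric versus non-symmetric passage are correct elaborations of points the paper leaves implicit.
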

\begin{proof}
We have the isomorphism $\L\cong \left(L_{\infty}\right)_{\MC} \cong \widehat{\left(l_\infty\right)_{\MC}}$. Since by Theorem \ref{hatacyclic} the hat-construction of any operad is acyclic we conclude that $\L$ is acyclic. The proof for $\A$ is similar.
\end{proof}

\section{Twisting in operadic algebras}\label{operal}
Given an $A$-linear $\O$ algebra structure on $V$ together with an MC element $\xi\in\MC(V,A)$ we can consider the twist of the underlying
$A$-linear $L_\infty$ structure by $\xi$. If there is an endomorphism of $\O_{\MC}$ extending the automorphism $\tw$ of $\L$ then the twisted higher $L_\infty$ products on $A\otimes V$ are themselves part of an $\O$ algebra structure.  In general, such an endomorphism does not exist. For example, there is no endomorphism of $(l_\infty)_{\MC}$ extending the endomorphism $\tw$ on $(L_\infty)_{\MC}=\L$. However, we can still twist the differential on $A\otimes V$ and ask whether this new dg vector space supports
an operadic structure. This answer to this is provided by the notion of Willwacher's twisting of the operad $\O$ introduced  in \cite{Wil}, cf. also a detailed exposition in \cite{Dol}; we give
an alternative, hopefully more transparent, treatment.

\begin{lem}
Let $\alpha:=-\sum_{n=1}^\infty\frac{1}{n!}m_{n+1}\circ x^{\otimes n}\in [l_\infty]_{\MC}(1)$. Then $\alpha$ is an MC element in the dga $[l_\infty]_{\MC}(1)$.
\end{lem}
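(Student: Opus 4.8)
The plan is to verify the Maurer–Cartan equation $d(\alpha)+\frac{1}{2}[\alpha,\alpha]=0$ in the dga $[l_\infty]_{\MC}(1)$ directly, using the definition of the differential on $[l_\infty]_{\MC}$ and the fact (already noted in the excerpt) that $x$ is an MC element in $[l_\infty]_{\MC}(0)$, i.e. $d(x)=\sum_{n\geq1}\frac{1}{n!}m_n\circ x^{\otimes n}$. The key observation is that $\alpha$ is, up to sign, exactly the inner derivation $\delta$ measuring how the twisted differential $d^x$ on the underlying space acts: indeed, in any $l_\infty$-algebra with MC element the twisted unary operation is $m_1^x(y)=\sum_{i\geq0}\frac{1}{i!}m_{1+i}(x,\dots,x,y)=m_1(y)-\alpha(y)$ once one feeds in the MC element, so $\alpha$ should be precisely the correction term that makes $(d_V+m_1+\alpha)$ square to zero after twisting. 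So the structural content is that $\alpha$ encodes the difference between the twisted and untwisted unary parts, and the MC property of $\alpha$ is the operadic shadow of the identity $(m_1^x)^2=0$, which in turn follows from the MC property of $x$.

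Concretely, I would proceed as follows. First, compute $d(\alpha)$ by applying the Leibniz rule: $d(\alpha)=-\sum_{n\geq1}\frac{1}{n!}d(m_{n+1})\circ x^{\otimes n} \pm \sum_{n\geq1}\frac{1}{n!}m_{n+1}\circ\bigl(\sum_{j}x^{\otimes(j-1)}\otimes d(x)\otimes x^{\otimes(n-j)}\bigr)$, where $d(m_{n+1})$ is the cobar differential in $l_\infty$ and $d(x)$ is substituted from the formula above. Second, compute $\frac{1}{2}[\alpha,\alpha]$ in $[l_\infty]_{\MC}(1)$; since $[\alpha,\alpha]=2\alpha\circ_1\alpha$ for an odd element, this is $\sum_{n,p\geq1}\frac{1}{n!p!}(m_{n+1}\circ x^{\otimes n})\circ_1(m_{p+1}\circ x^{\otimes p})$, i.e. a sum of two-vertex trees with the output of one $m$ plugged into an input slot of the other, all other slots filled with copies of $x$. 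Third, I would match terms: the cobar differential $d(m_{n+1})$ produces exactly the two-vertex trees with all-but-one slots filled by $x$ where the plugging edge need not be in the first slot; combined with the symmetrization over shuffles implicit in the $l_\infty$ structure and the combinatorial factors $\frac{1}{i!}\frac{1}{j!}$ versus $\frac{1}{n!}$, these cancel against $\frac{1}{2}[\alpha,\alpha]$, while the terms coming from substituting $d(x)$ into the arguments of $m_{n+1}$ reassemble (again after collecting binomial coefficients of the form $\binom{n+p}{p}$) into the remaining pieces and cancel likewise. This is essentially the same bookkeeping as in the lemma asserting $d^2(x)=0$ for $\L$ and $\A$ earlier in the paper, transported one arity up.

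Alternatively — and this is cleaner — I would deduce the statement from the results already proved rather than recompute. By Theorem~\ref{twalg} (its $l_\infty$/$\O_{\MC}$ generalization, Theorem~\ref{charMC}) an $A$-linear $l_\infty$-algebra $V$ together with $\xi\in\MC(V,A)$ is the same as an $[l_\infty]_{\MC}$-algebra structure, and under this correspondence $\alpha$ acts on $A\otimes\Pi V$ as $-\sum_{n\geq1}\frac{1}{n!}m_{n+1}(\xi^{\otimes n},-)$, which is $m_1-m_1^\xi$ by formula~(\ref{twist1}). Since both $(d_V+m_1)^2=0$ and $(d_V+m_1^\xi)^2=0$ (the latter because twisting by an MC element yields an $l_\infty$-structure, Definition~\ref{closedtw}), the element $\alpha$ satisfies the MC equation in $\End(\Pi V)(1)$ for every such $V$. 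Taking $V$ to be a cofree/regular-type $[l_\infty]_{\MC}$-algebra on which $[l_\infty]_{\MC}$ acts faithfully — or, to avoid topological subtleties exactly as in the proof that $\tw$ is a dg automorphism, passing to the discrete quotients $[l_\infty]_{\MC}/(x^m)$ and their regular representations — we conclude that $d(\alpha)+\frac{1}{2}[\alpha,\alpha]=0$ already in $[l_\infty]_{\MC}(1)$.

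The main obstacle is the bookkeeping in the direct approach: getting the shuffle symmetrizations and the factorial/binomial coefficients to line up so that the cobar-differential terms cancel precisely against $\frac{1}{2}[\alpha,\alpha]$ and the $d(x)$-substitution terms cancel among themselves. The faithful-representation argument sidesteps this entirely, at the cost of the (routine, and already used elsewhere in the paper) reduction to the discrete truncations $[l_\infty]_{\MC}/(x^m)$; I would present that argument as the proof and relegate the combinatorial verification to a remark.
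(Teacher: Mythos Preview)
Your two approaches are both different from the paper's, which is considerably shorter. The paper observes that $\alpha = m_1 - \tw(m_1)$ inside $\L(1)$. Since $-m_1$ is an MC element there ($d(m_1)=m_1^2$) and $\tw$ is a dg automorphism of $\L$, the element $-\tw(m_1)$ is MC as well; hence their difference $\alpha = (-\tw(m_1)) - (-m_1)$ is MC in the twisted dga $\L^{-m_1}(1)$. Finally one notes that $(l_\infty)_{\MC}$ sits inside $\L^{-m_1}$ as a dg suboperad (this is the content of $\widehat{(l_\infty)_{\MC}}\cong \L$), so $\alpha$ is MC in $(l_\infty)_{\MC}(1)$. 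No combinatorics, no representations.

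Your direct computation is valid in principle and will go through once the shuffle and factorial bookkeeping is done, but it is exactly the tedium the paper's argument avoids. Your second approach contains the same key observation as the paper --- that $\alpha$ is the difference of the untwisted and twisted unary operations --- but the step ``since both $(d_V+m_1)^2=0$ and $(d_V+m_1^\xi)^2=0$, the element $\alpha$ satisfies the MC equation in $\End(\Pi V)(1)$'' is not right as written. Knowing that two elements $a,b$ are MC in a dga gives that $a-b$ is MC in the $b$-twisted dga, not in the original one; in the $l_\infty$ setting $m_1=0$, so what you actually deduce is that $m_1^\xi$ is MC, while $f(\alpha)=-m_1^\xi$, and $d(-m_1^\xi)+(-m_1^\xi)^2 = 2(m_1^\xi)^2$, which you have not shown vanishes. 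The paper resolves exactly this point by passing to the $(-m_1)$-twist of $\L$ and identifying $(l_\infty)_{\MC}$ as a dg subalgebra there; your representation-theoretic route would need the analogous step (verifying MC in the correct twisted endomorphism dga and then matching that with the differential on $(l_\infty)_{\MC}$), which you have omitted.
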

\begin{proof}
Note that $\alpha=m_1-\tw(m_1)$. Since $-m_1$ is an MC element in the dga $[\hat{l}_\infty]_{\MC}(1)=\L(1)$ and $\tw$ is an automorphism of $\L(1)$ we conclude that $\tw(-m_1)=-\tw(m_1)$ is also an MC element. Upon twisting by $-m_1$ in the dga $\L(1)$ the MC element $-\tw(m_1)$ becomes an MC element $m_1-\tw(m_1)=\alpha$ inside $\L^{-m_1}$ and it remains to observe that $\alpha$ belongs to the dg subalgebra $[l_\infty]_{\MC}(1)\subset\L^{-m_1}$.
\end{proof}
The following result is an immediate consequence.
\begin{cor}
Let $\alpha:=-\sum_{n=1}^\infty\frac{1}{n!}\phi(m_{n+1})\circ x^{\otimes n}\in \O_{\MC}(1)$. Then $\alpha$ is an MC element in the dga $\O_{\MC}(1)$.
\end{cor}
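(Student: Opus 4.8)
The plan is to deduce the Corollary from the preceding Lemma by transporting the MC element along the canonical morphism of dg operads $\phi'\colon\L\to\O_{\MC}$ from the remark following the definition of $\O_{\MC}$. Recall that $\phi'$ is determined by requiring that it restrict to $\phi$ on $L_\infty$ and send the nullary generator $x\in\L(0)$ to $x\in\O_{\MC}(0)$; it is a morphism of dg operads, compatibility with the differentials being immediate from the defining formula for $d_{\O_{\MC}}(x)$. Note also that $[l_\infty]_{\MC}(1)$ is a dg subalgebra of $\L(1)$, so the element $\alpha=-\sum_{n\geq 1}\frac{1}{n!}m_{n+1}\circ x^{\otimes n}$, which is MC in $[l_\infty]_{\MC}(1)$ by the Lemma, is a fortiori an MC element of the dga $\L(1)$.

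Next I would pass to arity one and evaluate $\phi'$ there. The morphism $\phi'$ restricts to a morphism of dgas $\L(1)\to\O_{\MC}(1)$ which is continuous for the $x$-adic topologies, hence commutes with the convergent infinite sums at hand. Since $\phi'$ acts by $\phi$ on the operations $m_{n+1}$ and fixes $x$, it sends $m_{n+1}\circ x^{\otimes n}$ to $\phi(m_{n+1})\circ x^{\otimes n}$, and therefore sends $\alpha$ to $-\sum_{n\geq 1}\frac{1}{n!}\phi(m_{n+1})\circ x^{\otimes n}$, which is exactly the element $\alpha$ of the present Corollary.

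Finally I would invoke the standard fact that a morphism of dgas preserves MC elements: if $g\colon R\to S$ is a morphism of dgas and $\beta\in R$ is odd with $d\beta+\beta^{2}=0$, then $d(g\beta)+(g\beta)^{2}=g(d\beta+\beta^{2})=0$. Applying this with $g=\phi'$ in arity one and with $\beta$ the MC element provided by the Lemma yields the assertion. I do not expect any genuine obstacle here; the single point deserving a sentence of justification is the explicit identification of $\phi'(\alpha)$ with the Corollary's $\alpha$, i.e.\ that $\phi'$ really is given on generators by $\phi$ and by $x\mapsto x$, which is part of its very construction.
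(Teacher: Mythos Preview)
Your overall strategy---push the MC element of the Lemma through a dg operad map to $\O_{\MC}$ and use that dga morphisms preserve MC elements---is exactly right, and is what the paper has in mind when it calls the Corollary an ``immediate consequence''. However, there is a genuine error in the route you take.

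The claim that $[l_\infty]_{\MC}(1)$ is a dg subalgebra of $\L(1)$ is false. As graded operads one can embed $(l_\infty)_{\MC}\hookrightarrow\L$, but the differentials disagree on $x$: in $\L$ one has $d(x)=m_1(x)+\tfrac{1}{2}m_2(x,x)+\ldots$, whereas in $(l_\infty)_{\MC}$ the $m_1$-term is absent. Consequently $\alpha$ is \emph{not} MC in $\L(1)$: a short computation using $\alpha=m_1-\tw(m_1)$, $d(m_1)=m_1^2$ and $d(\tw(m_1))=\tw(m_1)^2$ gives $d\alpha+\alpha^2=[m_1,\alpha]\neq 0$. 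What the Lemma's proof actually establishes is that $(l_\infty)_{\MC}(1)$ is a dg subalgebra of $\L^{-m_1}(1)$, the $(-m_1)$-twisted operad, not of $\L(1)$ itself.

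The fix is to bypass $\L$ altogether. The map $\phi\colon l_\infty\to\O$ induces directly a dg operad morphism $\phi_{\MC}\colon (l_\infty)_{\MC}\to\O_{\MC}$ (extend by $x\mapsto x$; compatibility with the differential on $x$ holds because $\phi(m_1)=0$ and otherwise the defining formulas match). In arity one this is a continuous dga map sending the Lemma's $\alpha$ to $-\sum_{n\geq 1}\tfrac{1}{n!}\phi(m_{n+1})\circ x^{\otimes n}$, and your final paragraph then applies verbatim. Alternatively, if you insist on using $\phi'\colon\L\to\O_{\MC}$, note that $\phi'(m_1)=0$, so the MC element $-\tw(m_1)\in\L(1)$ (established inside the proof of the Lemma) maps to the desired $\alpha\in\O_{\MC}(1)$; but then you are not using the statement of the Lemma, only a step in its proof.
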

\noproof
We can now give a definition of an operad `governing' $\O$ algebras with an MC twisted differential.
\begin{defi}
Let $\O$ be a dg operad supplied with a dg operad map $\phi:l_\infty\to\O$. The dg operad $\Tw_{\phi}\O$ is defined as $\Tw_{\phi}\O=[\O_{\MC}]^\alpha$. When the map $\phi$ is clear from the context it will be suppressed from notation.
\end{defi}
\begin{rem}
Our version of $\Tw\O$ is almost the same as Willwacher's in \cite{Wil}, which we denote here by $\Tw^W\O$. Namely, $\Tw^W\O$ is a dg suboperad in $\Tw\O$ and under this inclusion $\Tw^W\O(n)\cong \Tw\O(n)$ for $n>0$. In arity zero we have
\[{\Tw}^W\O(0)=\O(0)\hookrightarrow\O(0)\oplus\ground\langle x\rangle={\Tw}\O(0)\]
where $\ground\langle x\rangle$ stands for the one-dimensional space spanned by $x$.

Willwacher's construction starts with the operad $\widetilde{\Tw}\O$ which is our $\O[[x]]$ in arities $>0$ and $\O(0)$ in arity zero. In order to arrive at $\O_{\MC}$ the differential in $\widetilde{\Tw}\O$ is twisted by the MC element $\phi$ in the convolution Lie algebra of maps from $l_\infty$ to $\O$ which is shown to act on the Lie algebra of derivations of $\widetilde{\Tw}\O$. Finally, the twisting by $\alpha$ is performed to obtain $\Tw^W$.
\end{rem}
The following result is analogous to Proposition \ref{linftyMChat} and is proved using the same argument.
\begin{prop}\label{linftyMChat'} \
Let $\phi:l_\infty\to \O$ and $\psi:l_\infty\to \P$ be maps of dg operads, and let $\rho:\O\to\P$ be a map of dg operads
such that $\rho\circ\phi=\psi$. If $\psi$ is a quasi-isomorphism, then the induced map
$\psi_{\MC}:\Tw_\phi\O\to \Tw_\phi\P$ is a quasi-isomorphism.
\end{prop}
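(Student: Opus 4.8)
The paragraph preceding the statement already indicates the route: this is proved by the same argument as Proposition~\ref{linftyMChat}, with one extra observation to handle the twisting by $\alpha$. First I would note that $\Tw_\phi\O=[\O_{\MC}]^\alpha$ has the same underlying graded operad as $\O_{\MC}$, namely $\O[[x]]$, and that the comparison map $\psi_{\MC}$ is the operad map which is $\rho$ on $\O$ and sends $x$ to $x$; this is well defined precisely because $\rho\circ\phi=\psi$, so that $\rho$ carries $d_{\O_{\MC}}(x)$ and $\alpha$ for $\O$ to the corresponding elements for $\P$. I would then filter both $\Tw_\phi\O$ and $\Tw_\phi\P$ by the $x$-adic filtration $F^p=(x^p)$; since $\O_{\MC}$ is by construction the completion of $\O[x]$ at the ideal $(x)$, these filtrations are complete and Hausdorff and $\psi_{\MC}$ is filtration-preserving.

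The key point is that, because $\phi$ is defined on $l_\infty$, where $m_1=0$, both the $x$-part of the differential and the twisting by $\alpha$ strictly raise the $x$-adic filtration. Indeed $d_{\O_{\MC}}(x)=\sum_{n\geq 2}\frac{1}{n!}\phi(m_n)\circ x^{\otimes n}$ has lowest term in $x$-degree $2$, while $\alpha=-\sum_{n\geq 1}\frac{1}{n!}\phi(m_{n+1})\circ x^{\otimes n}$ lies in the ideal $(x)\subset\O_{\MC}(1)$, so the inner derivation $\delta_\alpha$ raises $x$-degree by at least one; the differential restricted to $\O$, on the other hand, is $d_\O$, which is $x$-degree homogeneous. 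Hence the associated graded dg operad of $\Tw_\phi\O$ is exactly the associated graded of $\O_{\MC}$: the graded operad $\O[[x]]$ with differential $d_\O$ on $\O$ and zero on $x$, that is, in each arity $n$, the complex $\prod_{k\geq 0}\O(n+k)_{S_k}$ with differential induced by $d_\O$; and similarly for $\P$.

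From here I would conclude exactly as in Proposition~\ref{linftyMChat}. Since $\operatorname{char}\ground=0$, taking $S_k$-coinvariants is exact and homology commutes with the product over $k$, so the hypothesis that $\psi$ --- equivalently $\rho$ in each arity --- is a quasi-isomorphism implies that $\psi_{\MC}$ induces a quasi-isomorphism on associated graded dg operads; and a filtration-preserving chain map between complete, Hausdorff filtered complexes that is a quasi-isomorphism on associated gradeds is itself a quasi-isomorphism. The only step needing real care --- the ``hard part'', such as it is --- is checking that $d_{\O_{\MC}}+\delta_\alpha$ respects the $x$-adic filtration and induces precisely $d_\O$ on the associated graded; this is immediate once one observes $\phi(m_1)=0$ and $\alpha\in(x)$, and beyond it the argument introduces no ingredient absent from the proof of Proposition~\ref{linftyMChat}.
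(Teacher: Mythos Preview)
Your proposal is correct and is precisely the argument the paper has in mind: it gives no separate proof, only the remark that the result ``is proved using the same argument'' as Proposition~\ref{linftyMChat}, and you have supplied exactly that argument together with the one additional observation needed, namely that $\alpha\in(x)$ so the twisting $\delta_\alpha$ strictly raises the $x$-adic filtration and hence vanishes on the associated graded.
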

\noproof
In particular, we see that operads of the form $\Tw\hat{\O}$ (such as $\Tw L_\infty$) are acyclic. The following result computes the homology of $\Tw l_\infty$ and $\Tw a_\infty$; it is analogous to the statement of Theorem \ref{MCquasi} and relies on it.
\begin{theorem}\label{MCquasi'}\
\begin{enumerate}\item
The natural projections $\Tw a_\infty\to a_\infty$ and $\Tw l_\infty\to l_\infty$ sending $x\in\Tw l_\infty(0)$ or $x\in\Tw a_\infty(0)$ to zero are quasi-isomorphisms of dg operads.
\item
The natural projections $\Tw\Ass\to \Ass$ and $\Tw\Lie\to \Lie$ sending $x\in\Tw\Ass(0)$ or $x\in\Tw\Ass(0)$ to zero are quasi-isomorphisms of dg operads.
\end{enumerate}
\end{theorem}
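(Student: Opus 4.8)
The plan is to prove part~(2) by hand and then deduce part~(1) from it, exactly as part~(1) of Theorem~\ref{MCquasi} is deduced from part~(2). The engine for part~(2) is the observation that, when $\O=\Ass$ or $\O=\Lie$, the dg operad $\Tw\O$ agrees with $\O_{\MC}$ as a graded operad and has differential equal to $-d_{\O_{\MC}}$; since negating a differential changes neither the cycles nor the boundaries, and since the projection $\Tw\O\to\O$ killing $x$ is literally the retraction of the split inclusion $\O\hookrightarrow\O_{\MC}$, Theorem~\ref{MCquasi}(2) then gives at once that $\Tw\O\to\O$ is a quasi-isomorphism.

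To establish $d^{\Tw}=-d_{\O_{\MC}}$, recall that $\Tw\O=[\O_{\MC}]^{\alpha}$ has the same underlying graded operad as $\O_{\MC}=\O[[x]]$ and differential $d^{\Tw}=d_{\O_{\MC}}+\delta_{\alpha}$, where $\delta_\alpha$ is the inner derivation attached to $\alpha\in\O_{\MC}(1)$. The operad $\O[[x]]$ is generated topologically by the binary generator $b$ of $\O$ (the bracket for $\Lie$, the product for $\Ass$) together with the nullary operation $x$, so a continuous odd derivation of it is determined by its values on $b$ and $x$; I would therefore compute $d^{\Tw}$ on these two generators. For $\O=\Ass,\Lie$ the map $\phi\colon l_\infty\to\O$ annihilates every $m_k$ with $k\neq2$, so $\alpha=-\,\phi(m_2)\circ_1 x$ and $d_{\O_{\MC}}(x)=\tfrac12\,\phi(m_2)\circ(x\otimes x)$. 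First, $d^{\Tw}(b)=\delta_\alpha(b)$ since $d_{\O_{\MC}}(b)=d_{\O}(b)=0$, and this vanishes: it is precisely the statement that MC-twisting the differential of a dg(l)a leaves its bracket (resp.\ product) a legitimate structure map, i.e.\ the Jacobi identity for $b$ (resp.\ the Leibniz rule for $[\xi,-]$) with one input fed the operation $x$. Second, since $x$ is nullary, $\delta_\alpha(x)=\alpha\circ_1 x=-(\phi(m_2)\circ_1 x)\circ_1 x=-\,\phi(m_2)\circ(x\otimes x)=-2\,d_{\O_{\MC}}(x)$, whence $d^{\Tw}(x)=d_{\O_{\MC}}(x)+\delta_\alpha(x)=-\,d_{\O_{\MC}}(x)$. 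As $d^{\Tw}$ and $-d_{\O_{\MC}}$ are continuous odd derivations agreeing on $b$ and on $x$, they coincide, and part~(2) follows as explained.

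For part~(1) I would argue exactly as in the proof of Theorem~\ref{MCquasi}: the standard quasi-isomorphisms $a_\infty\to\Ass$ and $l_\infty\to\Lie$ are compatible with the structure maps from $l_\infty$, so by Proposition~\ref{linftyMChat'} the induced maps $\Tw a_\infty\to\Tw\Ass$ and $\Tw l_\infty\to\Tw\Lie$ are quasi-isomorphisms; feeding these into the evident commuting squares together with the projections $\Tw\Ass\to\Ass$, $\Tw\Lie\to\Lie$ of part~(2) and the maps $a_\infty\to\Ass$, $l_\infty\to\Lie$, and invoking the two-out-of-three property, one obtains part~(1). (Alternatively, part~(2) could be proved by writing down an explicit contracting homotopy on the kernel ideal $(x)\subset\Tw\O$, in the style of the proof of Theorem~\ref{MCquasi}, but the sign-change argument above is cleaner and conceptually transparent.)

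The delicate point is the vanishing $d^{\Tw}(b)=0$, which has to be checked against the $\Pi$-shifted sign conventions in force here --- in particular against the fact that the nullary operation $x$ is even --- together with the coefficient bookkeeping in $\delta_\alpha(x)=-2\,d_{\O_{\MC}}(x)$, which is exactly what makes the overall sign of the differential come out to be $-1$. It is precisely this cancellation, available only because among the $m_k$ just the binary operation persists in $\Ass$ and $\Lie$, that breaks down for $a_\infty$ and $l_\infty$; this is why part~(1) must be routed through the quasi-isomorphism invariance of $\Tw$ rather than proved by the same direct computation.
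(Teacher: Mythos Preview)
Your proposal is correct and follows essentially the same route as the paper: reduce part~(1) to part~(2) via Proposition~\ref{linftyMChat'}, and for part~(2) compute the twisted differential on the generators $b$ and $x$, obtaining $d^{\Tw}(b)=0$ by Jacobi/Leibniz and $d^{\Tw}(x)=-d_{\O_{\MC}}(x)$, then invoke Theorem~\ref{MCquasi}(2). The only cosmetic difference is that the paper records the conclusion as an explicit dg operad isomorphism $\Tw\Lie\to\Lie_{\MC}$ given by $m_2\mapsto -m_2$, $x\mapsto x$, whereas you phrase it as the identity map intertwining $d^{\Tw}$ with $-d_{\O_{\MC}}$; these are equivalent, since $b\mapsto -b$ is an automorphism of $\O[[x]]$ (the defining relations of $\Lie$ and $\Ass$ being quadratic in $b$) carrying one differential to the other.
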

\begin{proof}
First note that part (1) is a direct consequence of part (2) by virtue of Proposition~\ref{linftyMChat'}.
We now prove part (2), restricting ourselves to the $\Lie$ case, the proof of the $\Ass$ case being essentially the same.

Recall that $\Tw\Lie:=[\Lie_{\MC}]^\alpha$ where $\alpha=-m_2(x,-)\in\Lie_{\MC}(1)$ and $m_2\in\Lie_{\MC}(2)$ . Further, $\Lie_{\MC}$ is isomorphic to $\Lie[[x]]$ where $x\in\Lie(0)$ and the differential acts as $d(x)=\frac{1}{2}m_2(x,x)$; here $m_2$ is the binary generator of $\Lie$ (and, of course, $d(m_2)=0$). The twisted differential will have the form \[d^\alpha(x)=\frac{1}{2}m_2(x,x)-m_2(x,x)=-\frac{1}{2}m_2(x,x).\]
Next, using the symbols $a,b$ as placeholders we find
\[d^\alpha(m_2)(a,b)=[x,[a,b]]-[[x,a],b]-[a,[x,b]].  \]
The last expression is zero by the Jacobi identity and so $d^\alpha(m_2)=0$. It follows that the map $\Tw l_\infty\cong (l_\infty)_{\MC}^\alpha\to (l_\infty)_{\MC}$ with
$m_2\mapsto -m_2$ and $x\mapsto x$ is an isomorphism of dg operads and the conclusion follows from Theorem \ref{MCquasi}.
\end{proof}
\begin{rem}
Theorem \ref{MCquasi'} could be phrased as saying that there exists a commutative diagrams of dg operads all arrows of which are quasi-isomorphisms.
\[
\xymatrix
{
\Tw l_\infty\ar[r]\ar[d]&l_\infty\ar[d]\\
\Tw\Lie\ar[r]&\Lie
}\hspace{2cm}
\xymatrix
{
\Tw a_\infty\ar[r]\ar[d]&a_\infty\ar[d]\\
\Tw\Ass\ar[r]&\Ass
}
\]
\end{rem}
\subsection{Block modules and algebras}\label{Block}
To understand properly algebras over the operad $\Tw_\phi\O$ we need a certain generalization of the notion of an $A$-linear $\O$ algebra which is of some independent interest. This generalization is obtained by treating $A$ consistently as the `ground' ring, so that it completely supplants $\ground$. We note that all our results involving $A$-linear structures could be formulated in this more general framework but for most applications it is not needed which is the reason it has not been adopted throughout the paper.
\begin{defi}
Let $(A,d_A)$ be a cdga and $(V,d_V)$ be a dg vector space. Let $d\in\operatorname{End}(A\otimes V)$ be an $A$-linear derivation, i.e. $d(a\otimes v)=d_A(a)\otimes v+(-1)^{|a|}a\otimes d(v)$; additionally assume that $d^2=0$. The pair $(A\otimes V,d)$ is called a \emph{Block module}.
It is called a \emph{formal} Block module if, in addition,
$A$ is a formal cdga and $d(v)-d_V(v)\in A_+\otimes V$ for all $v\in V$.
\end{defi}

\begin{rem}
Block modules were introduced (in somewhat greater generality) in \cite{block}. We will only be concerned with
formal Block modules. Given a formal cdga $A$ and a dg vector space $V$, the formal Block module structures
on $A\otimes V$ are in 1-1 correspondence with elements in $\MC(\operatorname{End}(V),A)$; namely, given $\xi\in\MC(\operatorname{End}(V),A)$, the endomorphism $d=d_A\otimes \id + \id \otimes d_V + \xi$ is a Block
differential on $A\otimes V$.
\end{rem}

\begin{defi}
Let  $(A\otimes V, d)$
be a formal Block module. Form the suboperad $\End_A(A\otimes V,d) \subset
\End(A\otimes V,d)$
consisting of $A$-multilinear maps $\left\{ (A\otimes  V)^{\otimes n} \to
A_+ \otimes V\right\}$; note that
$\End_A(A\otimes V,d)(n) \cong A_+ \otimes \Hom(V^{\otimes n}, V)$.

Further, given  a dg operad $\O$, a Block $\O$ algebra structure on $(A\otimes V, d)$
is an operad map $\O \to \End_A(\Pi A\otimes V,d)\cong \End_A(A\otimes\Pi V,d)$.
\end{defi}

\begin{rem}
Note that the operad  $\End_A(A\otimes V,d)$ is obtained from $A_+\otimes\End(V,d_V)$ by
twisting with the MC-element $\xi \in \MC(\End(V,d_V)(1),A)$ associated to the
Block module $(A\otimes V,d)$. In particular, the zero MC element gives a trivial twisting
of the operad $A_+\otimes\End(V,d_V)$; the associated notion of a Block $\O$ algebra then reduces
to the notion of an $A$-linear $\O$ algebra considered earlier in the paper.
\end{rem}

\begin{prop}\label{Blocktwchar}
Let $\O$ be a dg operad and $m$ be an MC element in $\O(1)$. Then the structure of a Block $\O$ algebra on a
formal Block module $(A\otimes V,d)$ is equivalent to an $\O^m$ algebra structure on the Block module
$(A\otimes V,d+m)$.
\end{prop}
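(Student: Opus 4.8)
The plan is to mimic the proof of Proposition~\ref{twchar}, but now carried out internally in the operad $\O_{\MC}$ (resp.\ $\O_{\MC}^m$) rather than in $\End(\Pi V,d_V)$. Recall the chain of identifications already available: a Block $\O$ algebra structure on $(A\otimes V,d)$ is, by definition, an operad map $\O\to\End_A(A\otimes\Pi V,d)$, and by the remark following the definition of Block $\O$ algebras this target operad is obtained from $A_+\otimes\End(\Pi V,d_V)$ by twisting with the MC element $\xi\in\MC(\End(V,d_V)(1),A)$ determined by the Block differential $d$. Thus I would first record, as a preliminary, the operadic version of Proposition~\ref{twchar} with coefficients: if $\O$ is a dg operad and $m\in\O(1)$ an MC element, then an operad map $\O\to\mathcal E$ into a topological dg operad $\mathcal E$ is the same as an operad map $\O^m\to\mathcal E^{\,\delta_{f(m)}}$, where $f(m)$ denotes the image of $m$. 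This is just the observation already made around formula (\ref{twop}), applied not only to endomorphism operads but to an arbitrary target; the twisting of the target by the inner derivation $\delta_{f(m)}$ and the simultaneous twisting of the source by $m$ cancel on the level of the underlying (untwisted) operad map, and the MC condition on $m$ guarantees $(\delta_{f(m)})^2$ accounts correctly for the change of differential.

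With that in place, the argument is a matter of unwinding both sides. First I would fix the MC element $\xi_d\in\MC(\End(V,d_V)(1),A)$ coming from the Block differential $d$, so that $\End_A(A\otimes\Pi V,d)=\big(A_+\otimes\End(\Pi V,d_V)\big)^{\xi_d}$. A Block $\O$ algebra on $(A\otimes V,d)$ is then an operad map $\O\to \big(A_+\otimes\End(\Pi V,d_V)\big)^{\xi_d}$, which by the untwisting step above (applied with source $\O^m$, untwisting by $-m$) is the same datum as an operad map $\O^m\to\big(A_+\otimes\End(\Pi V,d_V)\big)^{\xi_d}$ provided one simultaneously re-twists the target by the inner derivation associated to the image of $m$; that is, as an operad map $\O^m\to\big(A_+\otimes\End(\Pi V,d_V)\big)^{\xi_d+g(m)}$ where $g$ is the structure map. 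The key point is that $\xi_d+g(m)$ is again a Block-type MC element in $\End(V,d_V)(1)$ with coefficients in $A$, namely the one whose associated Block differential on $A\otimes V$ is precisely $d+m$ (here $m$ is read as an $A$-linear endomorphism via $g$). Hence the target operad is $\End_A(A\otimes\Pi V, d+m)$, and an operad map $\O^m\to\End_A(A\otimes\Pi V,d+m)$ is by definition a Block $\O^m$ algebra structure on $(A\otimes V,d+m)$. Reversing the steps shows the correspondence is bijective, and naturality is automatic since every step is functorial.

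The one genuine point to verify carefully — and the step I expect to be the main obstacle — is the additivity claim that the MC element $\xi_{d+m}$ of the Block module $(A\otimes V,d+m)$ equals $\xi_d$ plus (the image of) $m$, compatibly with how inner derivations of $A_+\otimes\End(\Pi V,d_V)$ interact with the twisting by $\xi_d$. Concretely, one must check that twisting $A_+\otimes\End(\Pi V,d_V)$ first by $\xi_d$ and then by the inner derivation $\delta_{g(m)}$ coincides with twisting by $\delta$ of the single MC element $\xi_d + g(m)$ — i.e.\ that MC elements of the dga $\big(A_+\otimes\End(\Pi V,d_V)\big)(1)$ add under iterated twisting in the expected way, $(\mathcal E^{\eta})^{\eta'}=\mathcal E^{\eta+\eta'}$ whenever $\eta'$ is MC in $\mathcal E^{\eta}$. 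This is a standard fact about MC twisting in dg Lie algebras (here, the dgla $\big(A_+\otimes\End(\Pi V,d_V)\big)(1)$ with its commutator bracket), but since $m$ itself need not lie in $A_+$, one should note that $g(m)\in\big(A_+\otimes\End(\Pi V,d_V)\big)(1)$ is legitimate because $g$ takes values in that dga, and that the resulting Block differential is genuinely $d_A\otimes\id+\id\otimes(d_V+m')+\xi_d$ with $m'$ the $A$-linear extension of the original $m$, which is visibly $d+m$. Once this bookkeeping is pinned down the proposition follows immediately.
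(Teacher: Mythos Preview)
Your approach is correct and essentially identical to the paper's: both identify $\End_A(A\otimes\Pi V,d)$ with $\big(A_+\otimes\End(\Pi V,d_V)\big)^{\xi_d}$, push the MC element $m$ across the given operad map, and then use the iterated-twisting identity $(\mathcal E^{\eta})^{\eta'}=\mathcal E^{\eta+\eta'}$ to recognize the new target as $\End_A(A\otimes\Pi V,d+m)$. One small slip in your final paragraph: since $g(m)\in A_+\otimes\End(\Pi V)(1)$, the new Block differential is $d_A\otimes\id+\id\otimes d_V+(\xi_d+g(m))$, i.e.\ $g(m)$ gets added to $\xi_d$ rather than to $d_V$; this does not affect the argument.
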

\begin{proof}
Let $\xi\in A_+\otimes\End(V)(1)$ be the MC-element associated to a Block module $(A\otimes V,d)$.
Suppose we are given an operad map $\O \to \End_A(A\otimes \Pi V,d)$. Then
the image of $m\in\O(1)$ in $\End_A(A\otimes \Pi V,d)(1) = \left[A_+\otimes\End(\Pi V)(1)\right]^\xi$ is an MC-element which we denote by the same symbol. So we obtain an induced operad map
$\O^m \to  \left[A_+\otimes\End(\Pi V)\right]^{\xi+m} = \End_A(A\otimes \Pi V,d+m)$. This establishes the
desired 1-1 correspondence.
\end{proof}
\begin{defi}
Let $\O$ be a dg operad supplied with an operad map $\phi:L_\infty\to\O$ and $(A\otimes V,d)$ be a Block $\O$ algebra.
Then an even element $\xi\in A_+\otimes \Pi V$ is
called a $\phi$-MC element if $d(\xi)+\sum_{i=1}^\infty
\frac{1}{i!}\phi(m_i)(\xi^{\otimes i})=0$. The set of such MC elements will be denoted by $\MC_{\phi}(V,A)$ or simply by $\MC(V,A)$ if the map $\phi:L_\infty\to\O$ is clear from the context.
\end{defi}
We have the following straightforward analogue of Theorem \ref{charMC}.
\begin{prop}\label{charMC1}
Let $(A\otimes V,d)$ be an Block $\O$ algebra and $\xi\in \MC(V,A)$. Then $(A\otimes V,d)$ has the structure of a Block $\O_{\MC}$ algebra. Conversely, any Block $\O_{\MC}$ algebra gives rise to an Block $\O$ algebra $(A\otimes V,d)$ together with a choice of an element in $\MC(V,A)$.
\end{prop}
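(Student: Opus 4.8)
The plan is to mimic, in the Block-module setting, the proof of Theorem~\ref{twalg}, of which the present statement is the exact analogue with $A_+\otimes\End(\Pi V)$ replaced by $\End_A(A\otimes\Pi V,d)$. Fix the formal Block module $(A\otimes V,d)$. By definition a Block $\O$ algebra structure on it is a continuous dg operad map $g\colon\O\to\End_A(A\otimes\Pi V,d)$, and a Block $\O_{\MC}$ algebra structure is a continuous dg operad map $f\colon\O_{\MC}\to\End_A(A\otimes\Pi V,d)$. The first step is to observe that, as a graded operad, $\O_{\MC}$ is $\O[[x]]$, obtained from $\O$ by freely adjoining a $0$-ary operation $x$ and completing at the ideal $(x)$; since $A$ is formal, the target operad is complete with respect to the $A_+$-adic filtration, so giving a continuous operad map out of $\O[[x]]$ is the same as giving a continuous operad map out of $\O$ together with a choice of element of $\End_A(A\otimes\Pi V,d)(0)\cong A_+\otimes\Pi V$ to serve as the image of $x$. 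Thus $f$ unpacks as a pair $(g,\xi)$ with $\xi:=f(x)\in A_+\otimes\Pi V$, and the image of $x$ is automatically topologically nilpotent because $A_+$ is pro-nilpotent, so the series below converge and there is no issue of definition.

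The second step is to decide when such a pair $(g,\xi)$ is compatible with differentials. Since $\O\subset\O_{\MC}$ is a \emph{dg} suboperad, the restriction $f|_\O=g$ is automatically a dg operad map, so $g$ is already a Block $\O$ algebra with no further condition imposed. The only remaining constraint is the single equation $f(d_{\O_{\MC}}x)=d(f(x))$. Here I would first dispatch the routine point that the differential of $\End_A(A\otimes\Pi V,d)$ in arity zero is just the restriction of the Block differential $d$ to the subspace $A_+\otimes\Pi V$; this subspace is $d$-stable because $d_A$ preserves $A_+$, and the MC twist of $d$ lies in $A_+\otimes\End(\Pi V)(1)$ with $A_+$ an ideal. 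Then, substituting $d_{\O_{\MC}}(x)=\sum_{n\ge1}\frac{1}{n!}\phi(m_n)\circ x^{\otimes n}$ and using multiplicativity of $f$ on $\O$, the equation $f(d_{\O_{\MC}}x)=d(f(x))$ becomes $\sum_{n\ge1}\frac{1}{n!}\phi(m_n)(\xi^{\otimes n})=d(\xi)$, which is precisely the $\phi$-MC equation for $\xi$ in the Block $\O$ algebra $(A\otimes V,d)$. Reading this equivalence in both directions produces the two mutually inverse constructions asserted in the statement.

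I do not expect a serious obstacle: the argument is structurally identical to that of Theorem~\ref{twalg}, and the earlier Block-module formalism already supplies the only non-formal input — in particular the identification $\End_A(A\otimes V,d)(n)\cong A_+\otimes\Hom(V^{\otimes n},V)$ and the description of $\End_A(A\otimes V,d)$ as a twist of $A_+\otimes\End(V,d_V)$ used in Proposition~\ref{Blocktwchar}. The one place calling for a little care is matching sign conventions, so that ``$f$ commutes with $d$ in arity zero'' reads as the stated $\phi$-MC equation and not its negative; as elsewhere in the paper this is guaranteed by the parity-reversion convention on operadic algebras together with the oddness of the operations $m_n$, and I would simply invoke that convention rather than re-derive the signs.
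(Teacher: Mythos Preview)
Your proposal is correct and follows precisely the approach the paper intends: the paper gives no proof of this proposition, marking it with \noproof and calling it a ``straightforward analogue'' of Theorem~\ref{charMC} (itself an unproved generalization of Theorem~\ref{twalg}), and your argument is exactly the Block-module transcription of the proof of Theorem~\ref{twalg}. The only point you flag --- the sign convention in arity zero --- is handled the same way in the paper's proof of Theorem~\ref{twalg}, so there is nothing to add.
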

\noproof

Then Proposition \ref{Blocktwchar} and Proposition \ref{charMC1} yield the following characterization of Block $\O$ algebras over $\Tw\O$.
\begin{theorem}
A Block $\Tw\O$ algebra structure on  $(A\otimes V,d)$ is equivalent to a Block $\O$ algebra structure on $(A\otimes V,d^\prime)$
together with $\xi\in\MC_{\phi}(V,A)$ such that $d$ is the MC-twisted differential, i.e.
$d=d^\prime+\sum_{n=1}^\infty \frac{1}{n!}m_{n+1}(\xi,\ldots,\xi,-)$.
\end{theorem}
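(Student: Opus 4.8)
As the sentence introducing the statement already indicates, the strategy is to compose the bijections of Proposition~\ref{Blocktwchar} and Proposition~\ref{charMC1}. Recall that by definition $\Tw_\phi\O=[\O_{\MC}]^\alpha$, the twist of the dg operad $\O_{\MC}$ by the Maurer--Cartan element $\alpha=-\sum_{n=1}^\infty\frac{1}{n!}\phi(m_{n+1})\circ x^{\otimes n}\in\O_{\MC}(1)$. First I would apply Proposition~\ref{Blocktwchar} \emph{with the dg operad taken to be $\O_{\MC}$ and its MC element taken to be $\alpha$}: this yields a natural bijection between Block $\Tw_\phi\O$ algebra structures on a formal Block module $(A\otimes V,d)$ and Block $\O_{\MC}$ algebra structures on the Block module $(A\otimes V,d-\hat\alpha)$, where $\hat\alpha\in\End_A(A\otimes\Pi V,d)(1)$ denotes the image of $\alpha$ under the given structure map; equivalently, the two Block differentials differ by $\hat\alpha$. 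One should record en route that $(A\otimes V,d-\hat\alpha)$ is again a \emph{formal} Block module, which is automatic because $\hat\alpha$ is a sum, convergent since $A$ is pro-nilpotent, of operators valued in $A_+\otimes\Pi V$, so that $d$ and $d-\hat\alpha$ agree modulo $A_+\otimes\Pi V$.

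Next I would feed this Block $\O_{\MC}$ algebra on $(A\otimes V,d'):=(A\otimes V,d-\hat\alpha)$ into Proposition~\ref{charMC1}, which identifies it with a Block $\O$ algebra structure on $(A\otimes V,d')$ together with a $\phi$-MC element $\xi\in\MC_\phi(V,A)$ for that structure, namely with $\xi$ the image of the $0$-ary generator $x$. It then remains to make $\hat\alpha$ explicit: writing $m_{n+1}$ for the image of $\phi(m_{n+1})$ under the Block $\O$ algebra structure on $(A\otimes V,d')$ and using $\xi=f(x)$, the defining formula for $\alpha$ shows that $\hat\alpha$ is the operator $v\mapsto -\sum_{n=1}^\infty\frac{1}{n!}m_{n+1}(\xi,\ldots,\xi,v)$, so that, after the usual sign bookkeeping with the conventions of Definition~\ref{closedtw} (compare the arity-one component of formula~(\ref{twist1})), the relation $d=d'+\hat\alpha$ is exactly the statement that $d$ is the differential $d'$ MC-twisted by $\xi$. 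Since the two bijections are natural and mutually inverse, their composite is the asserted equivalence.

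The step I expect to demand the most care is not any single one of these bijections but their compatibility at the level of the underlying graded operads and of the endomorphism operads. Concretely, Proposition~\ref{Blocktwchar} realises a Block $\Tw_\phi\O=[\O_{\MC}]^\alpha$ algebra on $(A\otimes V,d)$ through the twisted endomorphism operad $\End_A(A\otimes\Pi V,d)\cong[\End_A(A\otimes\Pi V,d')]^{\hat\alpha}$, and one has to check that restricting such a map along the inclusion $\O_{\MC}\hookrightarrow[\O_{\MC}]^\alpha$ and untwisting by $-\alpha$ produces precisely the structure map $\O_{\MC}\to\End_A(A\otimes\Pi V,d')$, with the $d'$ claimed above, that Proposition~\ref{charMC1} then decomposes. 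This is a bookkeeping exercise with the operad-twisting formalism (Proposition~\ref{twchar} and the Example preceding it, identifying $[\End(W,d_W)]^\mu$ with $\End(W,d_W+\mu)$), using that $-\alpha$ is an MC element of $[\O_{\MC}]^\alpha(1)$ and that twisting by $\alpha$ followed by twisting by $-\alpha$ is the identity; no new homological input, and in particular no use of the characteristic-zero hypothesis, is needed here.
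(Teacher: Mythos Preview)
Your proposal is correct and follows exactly the route the paper indicates: the paper gives no proof beyond the sentence ``Proposition~\ref{Blocktwchar} and Proposition~\ref{charMC1} yield the following characterization'', and you have simply spelled out that composition---applying Proposition~\ref{Blocktwchar} to the pair $(\O_{\MC},\alpha)$ and then Proposition~\ref{charMC1} to the resulting Block $\O_{\MC}$ algebra. Your remarks on the sign bookkeeping and on checking that the intermediate Block module remains formal are appropriate elaborations of what the paper leaves implicit.
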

\noproof
\begin{rem}\
\begin{enumerate}
\item
Since an $A$-linear $\Tw\O$ algebra is a special case of a Block algebra the above result also gives a characterization of $A$-linear $\Tw\O$ algebra structures. However, even if the original Block $\Tw\O$ algebra happens to be $A$-linear, the associated Block $\O$ algebra, in general, will not be. This was our chief reason for introducing Block modules and algebras.
\item
Since the dg operad $\Tw\O$ acts on $\O$ algebras with MC twisted differentials it follows that the suboperad $\Tw^W\O\subset \Tw\O$ also acts on such objects. However, it is unclear whether any action of $\Tw^W\O$ can be extended to an action of $\Tw\O$; this indicates that, perhaps, the construction $\Tw$ is more `natural' than $\Tw^W$.
\end{enumerate}
\end{rem}

\end{document}